\pgfplotsset{compat=1.12}
\newtheorem{theorem}{Theorem}
\newtheorem{proposition}[theorem]{Proposition}
\newtheorem{corollary}[theorem]{Corollary}
\newtheorem{lemma}[theorem]{Lemma}
\newtheorem{observation}[theorem]{Observation}
\newtheorem{claim}[theorem]{Claim}
\newtheorem*{claim*}{Claim}
\newtheorem*{theorem*}{Theorem}
\newtheorem*{proposition*}{Proposition}
\newtheorem*{corollary*}{Corollary}
\newtheorem*{lemma*}{Lemma}
\theoremstyle{remark}
\newtheorem{remark}{Remark}
\theoremstyle{definition}
\newtheorem{definition}[theorem]{Definition}
\newcommand{\DLMF}[2]{\cite[\href{http://dlmf.nist.gov/#1.E#2}{#1.#2}]{NIST:DLMF:v1.0.26}}
\newcommand{\OEIS}[1]{\text{\href{https://oeis.org/#1}{{\small \tt #1}}}}
\DeclareSymbolFont{cmcal}{OMS}{cmsy}{m}{n}
\DeclareSymbolFontAlphabet{\mathcal}{cmcal}
\newcommand{\N}[0]{\mathbb{N}}
\newcommand{\E}[0]{\mathbb{E}}
\newcommand{\V}[0]{\mathbb{V}}
\newcommand{\EL}[0]{\mathcal{E}}
\newcommand{\MA}[0]{\mathcal{M}}
\DeclarePairedDelimiter{\iverson}{\llbracket}{\rrbracket}
\def\down{\mathbin{
\begin{tikzpicture}[scale = 0.25]
\draw[-] (0, 0) to (1, -1);
\end{tikzpicture}
}}
\def\up{\mathbin{
\begin{tikzpicture}[scale = 0.25]
\draw[-] (0, 0) to (0.3, 1);
\end{tikzpicture}
}}
\newcommand\textboxa[1]{%
  \parbox{.33\textwidth}{#1}%
}
\newcommand\textboxb[1]{%
  \parbox{.24\textwidth}{#1}%
}
\newcommand\textboxc[1]{%
  \parbox{.42\textwidth}{\vspace{0.5ex}\raggedleft #1\vspace{0.5ex}}%
}
\definecolor{dgreen}{rgb}{0.0, 0.5, 0.2}
\newcommand{\drawlatticepath}[1]{
  \tikzmath{
      \x = 0;
      \y = 0;
      for \step in {#1}{
        {
          \draw[thin, ->] (\x, \y) -- (\x + 1, \y + \step);
        };
        \x = \x + 1;
        \y = \y + \step;
      };
    };
}
\title[Down-steps in generalized Dyck paths]{Down-step statistics in generalized Dyck paths}
\author{
  Andrei Asinowski\affiliationmark{1}
  \and Benjamin Hackl\affiliationmark{1,2}
  \and Sarah J. Selkirk\affiliationmark{1}\thanks{
    A.~Asinowski, B.~Hackl, and S.~J.~Selkirk are supported by the
    Austrian Science Fund (FWF): P~28466-N35, \emph{Analytic Combinatorics: Digits, Automata and Trees}, S.~J.~Selkirk 
    is supported by Austrian Science Fund (FWF): DOC 78, and A.~Asinowski is supported by Austrian Science Fund (FWF): P~32731, 
    \emph{Generic Rectangulations: Enumerative and Structural Aspects}. 
  }
}
\affiliation{
  Institut für Mathematik, Alpen-Adria-Universität Klagenfurt, Austria\\
  Department of Mathematics, Uppsala University, Sweden
}
\keywords{Lattice path, bijection, generating function, Fuss--Catalan number, binary matrix, convolutional code}
\begin{document}
\publicationdetails{24}{2022}{1}{17}{7163}

\maketitle

\begin{abstract}
The number of down-steps between pairs of up-steps in $k_t$-Dyck paths,
a generalization of Dyck paths consisting of steps $\{(1, k), (1, -1)\}$ such that the path
  stays (weakly) above the line $y=-t$, is studied. Results are proved
  bijectively and by means of generating functions, and lead to several interesting
  identities as well as links to other combinatorial structures. In
  particular, there is a connection between $k_t$-Dyck paths and
  perforation patterns for punctured convolutional codes (binary matrices) used in coding theory.
  Surprisingly, upon restriction to usual Dyck paths this
  yields a new combinatorial interpretation of Catalan numbers.
\end{abstract}
\maketitle

\section{Introduction}\label{sec:introduction}

This article presents a comprehensive investigation of the number of
down-steps between pairs of up-steps in a family of generalized Dyck paths
known as $k_t$-Dyck paths. In particular, we obtain an explicit formula
for this parameter, and explore its asymptotic growth.
Throughout, we further establish properties
of $k_t$-Dyck paths and their use in finding combinatorial interpretations
of identities.
To begin, we define these paths
and associated objects, and provide background and motivation for studying this parameter.
\begin{definition}[$k$-Dyck path]
  Let $k$ be a positive integer.  A \emph{$k$-Dyck path} is a lattice path that consists of
  \emph{up-steps} $(1,k)$ and \emph{down-steps} $(1,-1)$, starts at $(0,0)$,
  stays weakly above the line $y = 0$ and ends on the line $y = 0$.
\end{definition}
Notice that if a $k$-Dyck path has $n$ up-steps, then it has $kn$ down-steps, and thus
has \emph{length} $(k+1)n$. The family of $k$-Dyck paths in this form were introduced by Hilton and Pedersen
\cite{Hilton-Pedersen:1991:catalan-generalization} and have since been the
topic of several studies involving statistics in these paths
and their relations to other mathematical objects
(see, for example, \cite{Cameron:2016:Returns, Hackl-Heuberger-Prodinger:ta:lukasiewicz-ascents,
  Josuat-Kim:2016:Dyck-tilings, Xin-Zhang:2019:sweep-maps}).
However, note that equivalent lattice path families, such as the family of
paths consisting of horizontal and vertical unit steps staying weakly below
the line $y = kx$, have been under consideration for much longer --- see, for example,
\cite[Section~1.4]{Mohanty:1979:lattic} where this particular family is discussed,
including references to older literature. More information can also be found
in the recent survey article~\cite{Krattenthaler:2015:lattic} by Krattenthaler.

In particular, $k$-Dyck paths are a special case of \L{}ukasiewicz paths.
A \emph{\L{}ukasiewicz path} of length $n$ is a lattice path
consisting of steps $(1, i)$ where $i \in \{-1, 0, 1, 2, \ldots\}$,
that starts at $(0, 0)$, stays weakly above the line $y=0$,
and terminates at $(n, 0)$.
There is a well-known natural bijective mapping between \L{}ukasiewicz paths and
plane trees~\cite[Chapter~11]{Lothaire:1997:comb-words}
(see also~\cite[Example~3]{Banderier-Flajolet:2002:lat-path}).
Under this mapping, $k$-Dyck paths correspond to $(k+1)$-ary trees, and thus $k$-Dyck paths of length $(k+1)n$ are enumerated by
\emph{Fuss--Catalan numbers}
(see~\cite[Example I.14]{Flajolet-Sedgewick:ta:analy}) which are given by
\begin{equation}\label{eq:fuss-catalan}
  C_{n}^{(k)} = \frac{1}{kn+1} \binom{(k+1)n}{n}.
\end{equation}

In \cite{Gu-Prodinger-Wagner:2010:k-plane-trees}, Gu, Prodinger, and Wagner
introduced a class of $k$-coloured plane trees called \emph{$k$-plane trees},
which are plane trees where each vertex is labeled $i$ with $i \in \{1, 2, \ldots, k\}$ and the sum
of labels along any edge is at most $k+1$ (with the root labeled $k$).
They construct
a bijection between this class and $k$-Dyck paths as well as $(k+1)$-ary trees.
Furthermore, they note that removing the condition that the root is labeled $k$ gives a bijection
with generalized $k$-Dyck paths
which can go below the $x$-axis until some fixed negative height.
There they
mention that only few enumeration problems lead to the
numbers corresponding to the total number of generalized $k$-Dyck paths,
\begin{equation*}
  \frac{k}{n+1}\binom{(k+1)n}{n}.
\end{equation*}
We provide a new combinatorial interpretation for these numbers in \eqref{eq:downstep-recurrence:elevated}.

Recently, this generalization of $k$-Dyck paths
was further developed and enumerated by Selkirk
in~\cite{Selkirk:2019:MSc}, where such paths were given the name $k_t$-Dyck paths.
It is this family of paths that will be the main object of interest in our article.

\begin{definition}[$k_t$-Dyck paths]\label{def:k_t-Dyck}
  Let $k$ and $t$ be integers with $k > 0$ and $t \geq 0$.  A \emph{$k_{t}$-Dyck path} is a lattice path
  that consists of up-steps $(1, k)$ and down-steps $(1, -1)$, starts at $(0, 0)$,
  stays weakly above the line $y = -t$, and ends on the line $y = 0$.
  The combinatorial class of $k_t$-Dyck paths will be denoted by $\mathcal{S}^{(k)}_t$.
\end{definition}
As in the case of $k$-Dyck paths, a $k_t$-Dyck path with $n$ up-steps has
$kn$ down-steps, and thus its length is $(k+1)n$.

For $0 \leq t \leq k$,
$k_t$-Dyck paths are enumerated by
\emph{generalized Fuss--Catalan numbers} (see for example \OEIS{A001764},
\OEIS{A006013}, \OEIS{A002293}--\OEIS{A002296}, \OEIS{A069271},
\OEIS{A006632}, \OEIS{A118969}--\OEIS{A118971}
in the OEIS~\cite{OEIS:2022}),
as shown in the next proposition.

\begin{proposition}[{\cite[Proposition~2.2.2]{Selkirk:2019:MSc}}]\label{prop:k_t:enum}
  For $0 \leq t \leq k$,
  the generating function for $k_t$-Dyck paths (with the variable $x$
  counting the number of up-steps) is given by
  \begin{equation*}
    S^{(k)}_t(x)  = \sum_{n\geq 0}C_{n, t}^{(k)}x^{n},
  \end{equation*}
  where the generalized Fuss--Catalan numbers $C_{n, t}^{(k)}$ are given by
  \begin{equation}\label{eq:gen-fuss-catalan}
    C_{n, t}^{(k)} = \frac{t+1}{(k+1)n+t+1}\binom{(k+1)n+t+1}{n}.
  \end{equation}
\end{proposition}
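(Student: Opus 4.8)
The plan is to prove the explicit coefficient formula \eqref{eq:gen-fuss-catalan} directly; the generating function identity then follows at once. Concretely, the proposition asserts that the number of $k_t$-Dyck paths with $n$ up-steps equals $\frac{t+1}{(k+1)n+t+1}\binom{(k+1)n+t+1}{n}$, so it suffices to establish this count. A formula of the shape $\frac{t+1}{N}\binom{N}{n}$ with $N=(k+1)n+t+1$ is a signature of the cycle lemma, so that is the tool I would reach for.

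First I would encode a $k_t$-Dyck path with $n$ up-steps by its step sequence $s_1\cdots s_{(k+1)n}$ with each $s_i\in\{+k,-1\}$ (hence $n$ up-steps and $kn$ down-steps), so that the partial sums $S_j=s_1+\cdots+s_j$ satisfy $S_j\ge -t$ and $S_{(k+1)n}=0$. The crucial reduction is to append $t+1$ down-steps, producing a sequence $\widehat{s}$ of length $N=(k+1)n+t+1$ containing $n$ up-steps, with total sum $-(t+1)$ and with all \emph{proper} prefix sums $\ge -t$ (equivalently $>-(t+1)$); that is, the minimal value $-(t+1)$ is attained only at the final position. I would then show that appending $t+1$ down-steps is a bijection onto the set of all such \emph{good} sequences. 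Injectivity is immediate, as one recovers the path by deleting the last $t+1$ steps; the content is surjectivity, namely that every good sequence really does end in $t+1$ down-steps, so that deleting them leaves a path that returns to $0$ and stays $\ge -t$.

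For the enumeration I would invoke the cycle lemma in the following form: for any sequence of integers each $\ge -1$ with total sum $-(t+1)$, exactly $t+1$ of its $N$ cyclic rotations have all proper prefix sums $>-(t+1)$. Our steps $+k$ and $-1$ are indeed all $\ge -1$, which is exactly what makes the lemma applicable. Double counting the pairs $(w,i)$ consisting of a sequence $w$ and a rotation index $i$ for which the $i$-th rotation of $w$ is good then finishes the count: each of the $\binom{N}{n}$ sequences contributes exactly $t+1$ good pairs, while each good sequence arises from exactly $N$ pairs, so $(t+1)\binom{N}{n}=N\cdot(\#\text{good})$ and the number of good sequences is $\frac{t+1}{N}\binom{N}{n}$, which is precisely $C_{n,t}^{(k)}$.

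The main obstacle is the surjectivity claim, and it is exactly here that the hypothesis $0\le t\le k$ is used. Reading a good sequence from the right, once the last $i$ steps are known to be down-steps the prefix sum at position $N-i$ equals $-(t+1)+i$; an up-step at that position would drop the prefix sum at position $N-i-1$ to $-(t+1)+i-k$, which lies below $-t$ exactly when $i\le k$. Applying this for $i=0,1,\dots,t$ (legitimate since $t\le k$) forces the last $t+1$ steps to be down-steps, whence stripping them leaves a path ending at height $0$ with all prefix sums $\ge -t$. For $t>k$ this argument breaks, since an up-step may legitimately occur among the last steps, which is the combinatorial reason the formula is stated only for $0\le t\le k$. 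I expect the bookkeeping in this backward induction, together with pinning down the precise form of the cycle lemma and its double count, to be the only delicate points; an alternative route avoiding the cycle lemma would be to establish the functional equation $S^{(k)}_t=\bigl(S^{(k)}_0\bigr)^{t+1}$ with $S^{(k)}_0=1+x\,\bigl(S^{(k)}_0\bigr)^{k+1}$ by a first-passage decomposition and then extract coefficients via Lagrange inversion.
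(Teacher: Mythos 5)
Your proposal is correct and takes essentially the same route as the paper: both proofs reduce to the cycle lemma by padding the path with $t+1$ extra unit steps, yours by appending $t+1$ down-steps to the path as given, the paper's by passing to reversed paths (step set $\{(1,1),(1,-k)\}$) and prepending $t+1$ up-steps, which is the mirror image of the same construction. If anything, your write-up is tighter at the two delicate points: the double count over pairs $(w,i)$ of a word and a rotation index disposes of the periodicity/multiplicity issue more cleanly than the paper's ``proportionally there are $(t+1)/((k+1)n+t+1)$ unique permutations per sequence'' argument, and your backward induction showing that every good sequence must end in $t+1$ down-steps makes explicit exactly where the hypothesis $t\leq k$ is used.
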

For ease of notation we omit the superscript and write
$\mathcal{S}_t$, ${S}_t(x)$, or $C_{n, t}$ if~$k$ is fixed and clear from the context.
Note that for $t=0$ we  obtain the ``classical'' Fuss--Catalan numbers as in \eqref{eq:fuss-catalan}.

This can be proved in a number of ways, as discussed in~\cite{Selkirk:2019:MSc},
the simplest of those discussed being an application of the Cycle Lemma
(see~\cite{Dvoretzky-Motzkin:1947}; or \cite[Section 4.1]{Banderier-Flajolet:2002:lat-path},
\cite{Dershowitz-Zaks:1990:cycle-lemma}, \cite{Lothaire:1997:comb-words}
for lattice path applications). The result can also be found in literature: see, for example,
\cite[Section 1.4, Theorem 3; in particular (1.11)]{Mohanty:1979:lattic} for just the
enumeration, or \cite[Theorem 10.4.5]{Krattenthaler:2015:lattic} where both the
enumeration as well as the generating function representation is proved.
We choose to include the proof using the Cycle Lemma here nonetheless, because it also
sets the stage for some results later in this article.

\begin{proof}[of Proposition~\ref{prop:k_t:enum}]
  Trivially, $k_t$-Dyck paths of length $(k+1)n$ are in bijection with reverse $k_t$-Dyck paths of length $(k+1)n$
  (with step set $\{(1, 1), (1, -k)\}$). Using the Cycle Lemma, we will show that
  reverse $k_t$-Dyck paths of length $(k+1)n$ are enumerated by the generalized
  Fuss--Catalan numbers given in \eqref{eq:gen-fuss-catalan}.
  As outlined in \cite{Dershowitz-Zaks:1990:cycle-lemma}, a sequence $p_1\cdots p_n$
  of boxes and circles is \emph{$k$-dominating} if for every $j$, $1 \leq j \leq n$ the number of boxes in
  $p_1\cdots p_j$ is more than $k$ times the number of circles.

  The Cycle Lemma states that: ``For any sequence $p_1\cdots p_{m+n}$ of $m$ boxes and
  $n$ circles, $m \geq kn$, there exist exactly $m-kn$ (out of $m+n$) cyclic permutations
  $p_j\cdots p_{m+n}p_1\cdots p_{j-1}$, $1\leq j\leq m+n$, that are $k$-dominating.''

  We claim that the number of unique $k$-dominating sequences of $n$ circles and $kn+t+1$ boxes is equal to the number of
  reverse $k_t$-Dyck paths, where a box represents a $(1, 1)$ step, and a circle represents a $(1, -k)$ step in order.
  A $k$-dominating sequence of boxes and circles can be drawn as a path starting at $(0, -(t+1))$ and ending at $((k+1)n+t+1, 0)$,
  and its subpath from $(t+1, 0)$ to $((k+1)n+t+1, 0)$ is a valid reverse $k_t$-Dyck path. We can take this subpath because every 
  $k$-dominating sequence must begin with at least $t+1$ boxes (or $(1, 1)$ steps). Since the sequence is
  $k$-dominating, we have that $k$ times the number of $(1, 1)$ steps is always strictly greater than the number
  of $(1, -k)$ steps, i.\,e.\, $k\cdot \#(1, 1) + 1 \geq \#(1, -k)$, after removing the initial $t+1$ steps of type $(1, 1)$
  the path will possibly have a $(1, -k)$ step at height $k-t$ and we have the condition that the path stays weakly
  above the line $y = -t$. Thus unique $k$-dominating sequences of $n$ circles and $kn+t+1$ boxes and reverse $k_t$-Dyck
  paths are in bijection.

  The number of sequences of $n$ circles and $kn+t+1$ boxes that can be constructed is
  \begin{equation*}
    \binom{(k+1)n+t+1}{n}.
  \end{equation*}
  By applying the Cycle Lemma we see that for each sequence there are exactly $(kn+t+1)-kn = t+1$ starting points for cyclic
  permutations of the sequence which are $k$-dominating. However, these starting points might not
  give unique sequences due to periodicity of the sequence. On the other hand, all non-unique
  $k$-dominating cyclic permutations are periodic with a period that divides the length of the
  sequence. Therefore proportionally there are $(t+1)/((k+1)n+t+1)$ (unique) $k$-dominating cyclic
  permutations per sequence. Hence there are
  \begin{equation*}
    \frac{t+1}{(k+1)n+t+1}\binom{(k+1)n+t+1}{n}
  \end{equation*}
  unique $k$-dominating sequences, which concludes the proof.
\end{proof}

\begin{remark}\label{rem:t>k}
  In a recent preprint by Prodinger \cite{Prodinger:2019:negative-boundary} an
  approach for investigating $k_t$-Dyck paths with $t > k$ using generating functions
  was discussed --- note that the enumeration of these paths is known, and can be obtained
  from, e.g., \cite[Theorem 10.4.7]{Krattenthaler:2015:lattic}.
  Combinatorially, removing the restriction of $0 \leq t \leq k$ results in the paths
  behaving differently with respect to certain decompositions.  As we will see later, the
  approaches presented in our paper cannot be applied directly to the case $t > k$: this
  is a subject of our ongoing research. Therefore throughout this
  work we assume that $0 \leq t\leq k$, unless stated otherwise.
\end{remark}

In this paper, we study the number of down-steps between pairs of up-steps
in $k_t$-Dyck paths.

\begin{definition}[Number of down-steps]\label{def:downsteps}
  Let $n\geq 1$ and $0\leq r\leq n$ be two non-negative
  integers. Let $s_{n, t, r}^{(k)}$ be the
  total number of all
  down-steps between the $r$-th
  and the $(r+1)$-th up-steps in all $k_{t}$-Dyck paths of length $(k+1)n$.
  For $r = 0$ this quantity
  enumerates all down-steps before the
  first up-step, and for $r = n$  all down-steps after
  the last up-step.
\end{definition}
Whenever $k$ is clear from the given context, we write
$s_{n, t, r}$ instead of $s_{n, t,r}^{(k)}$, to improve readability.
Figure~\ref{fig:3-dyck-size-2} illustrates all $3_{1}$-Dyck paths with
$n=2$ up-steps. By counting the down-steps of the paths in the figure
we can see that $s_{2,1,0}^{(3)} = 4$, $s_{2,1,1}^{(3)} = 16$, and $s_{2,1,2}^{(3)} = 34$.

\begin{figure}[ht]
  \begin{minipage}[ht]{0.19\linewidth}\centering
    \begin{tikzpicture}[scale=0.3]
      \draw[help lines] (0,-1) grid (8,6);
      \draw[thick, ->] (-0.25, 0) -- (8.5, 0);
      \draw[thick, ->] (0, -1.25) -- (0, 6.5);
      \drawlatticepath{-1, 3, -1, -1, -1, 3, -1, -1}
    \end{tikzpicture}
    $(1, 3, 2)$
  \end{minipage}
  \begin{minipage}[ht]{0.19\linewidth}\centering
    \begin{tikzpicture}[scale=0.3]
      \draw[help lines] (0,-1) grid (8,6);
      \draw[thick, ->] (-0.25, 0) -- (8.5, 0);
      \draw[thick, ->] (0, -1.25) -- (0, 6.5);it-status
      \drawlatticepath{-1, 3, -1, -1, 3, -1, -1, -1}
    \end{tikzpicture}
    $(1, 2, 3)$
  \end{minipage}
  \begin{minipage}[ht]{0.19\linewidth}\centering
    \begin{tikzpicture}[scale=0.3]
      \draw[help lines] (0,-1) grid (8,6);
      \draw[thick, ->] (-0.25, 0) -- (8.5, 0);
      \draw[thick, ->] (0, -1.25) -- (0, 6.5);
      \drawlatticepath{-1, 3, -1, 3, -1, -1, -1, -1}
    \end{tikzpicture}
    $(1, 1, 4)$
  \end{minipage}
  \begin{minipage}[ht]{0.19\linewidth}\centering
    \begin{tikzpicture}[scale=0.3]
      \draw[help lines] (0,-1) grid (8,6);
      \draw[thick, ->] (-0.25, 0) -- (8.5, 0);
      \draw[thick, ->] (0, -1.25) -- (0, 6.5);
      \drawlatticepath{-1, 3, 3, -1, -1, -1, -1, -1}
    \end{tikzpicture}
    $(1, 0, 5)$
  \end{minipage}
  \begin{minipage}[ht]{0.19\linewidth}\centering
    \begin{tikzpicture}[scale=0.3]
      \draw[help lines] (0,-1) grid (8,6);
      \draw[thick, ->] (-0.25, 0) -- (8.5, 0);
      \draw[thick, ->] (0, -1.25) -- (0, 6.5);
      \drawlatticepath{3, -1, -1, -1, -1, 3, -1, -1}
    \end{tikzpicture}
    $(0, 4, 2)$
  \end{minipage}

  \begin{minipage}[ht]{0.19\linewidth}\centering
    \begin{tikzpicture}[scale=0.3]
      \draw[help lines] (0,-1) grid (8,6);
      \draw[thick, ->] (-0.25, 0) -- (8.5, 0);
      \draw[thick, ->] (0, -1.25) -- (0, 6.5);
      \drawlatticepath{3, -1, -1, -1, 3, -1, -1, -1}
    \end{tikzpicture}
    $(0, 3, 3)$
  \end{minipage}
  \begin{minipage}[ht]{0.19\linewidth}\centering
    \begin{tikzpicture}[scale=0.3]
      \draw[help lines] (0,-1) grid (8,6);
      \draw[thick, ->] (-0.25, 0) -- (8.5, 0);
      \draw[thick, ->] (0, -1.25) -- (0, 6.5);
      \drawlatticepath{3, -1, -1, 3, -1, -1, -1, -1}
    \end{tikzpicture}
    $(0, 2, 4)$
  \end{minipage}
  \begin{minipage}[ht]{0.19\linewidth}\centering
    \begin{tikzpicture}[scale=0.3]
      \draw[help lines] (0,-1) grid (8,6);
      \draw[thick, ->] (-0.25, 0) -- (8.5, 0);
      \draw[thick, ->] (0, -1.25) -- (0, 6.5);
      \drawlatticepath{3, -1, 3, -1, -1, -1, -1, -1}
    \end{tikzpicture}
    $(0, 1, 5)$
  \end{minipage}
  \begin{minipage}[ht]{0.19\linewidth}\centering
    \begin{tikzpicture}[scale=0.3]
      \draw[help lines] (0,-1) grid (8,6);
      \draw[thick, ->] (-0.25, 0) -- (8.5, 0);
      \draw[thick, ->] (0, -1.25) -- (0, 6.5);
      \drawlatticepath{3, 3, -1, -1, -1, -1, -1, -1}
    \end{tikzpicture}
    $(0, 0, 6)$
  \end{minipage}
  \caption{All $3_1$-Dyck paths with two up-steps. The triple below
    each path indicates its contribution to $s_{2,1,0}^{(3)}$, $s_{2,1,1}^{(3)}$, $s_{2,1,2}^{(3)}$ --- that is,
    the number of down-steps before the first up-step,
    between the first and second up-step, and after the second
    up-step, respectively.}
  \label{fig:3-dyck-size-2}
\end{figure}
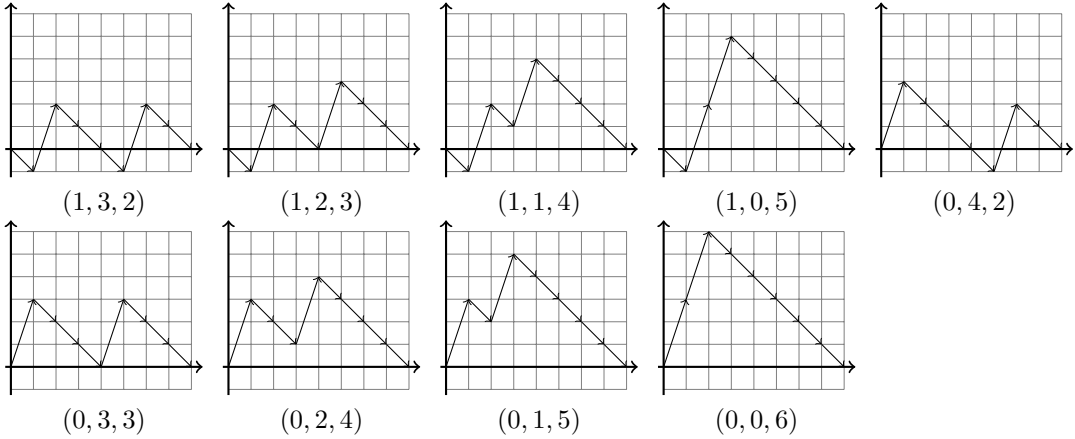

The motivation for studying down-steps in
$k_t$-Dyck paths comes from trying to better understand and enumerate
generalized Kreweras walks.  Walks in the quarter plane have been and
remain an active area of research, with many unresolved questions.
Kreweras walks \cite{Kreweras:1965:OGwalks} are walks in the
quarter plane, $\mathbb{Z}_{\geq 0}\times\mathbb{Z}_{\geq 0}$,
 that consist of steps
 $\{(1, 1), (-1, 0), (0, -1)\}$, and start and end at $(0,0)$.

We consider a generalized Kreweras model with step set
$\{(a, b), (-1, 0), (0, -1)\}$, $a, b \in
\mathbb{N}$. These \emph{$(a,b)$-Kreweras}
walks can be decomposed into pairs consisting of an $a$-Dyck path and
a $b$-Dyck path using an observation in \cite[Proposition
21]{Bostan-BousquetMelou-Melczer:2018:quarter-plane}. Given an
arbitrary $(a,b)$-Kreweras walk, we sequentially consider the
non-zero $x$-coordinates of steps in the walk, which will be either
$a$ or $-1$, and thus form a sequence which represents an $a$-Dyck
path. Similarly, non-zero $y$-coordinates are either $b$ or $-1$ and
thus form a $b$-Dyck path.

However, because not all the conditions of \cite[Proposition 21]{Bostan-BousquetMelou-Melczer:2018:quarter-plane} are
met, this decomposition is not unique --- two different walks can
result in the same pair of $a$-Dyck and $b$-Dyck path. For example,
consider the decomposition in Figure~\ref{fig:diff-walk-same-dec}.
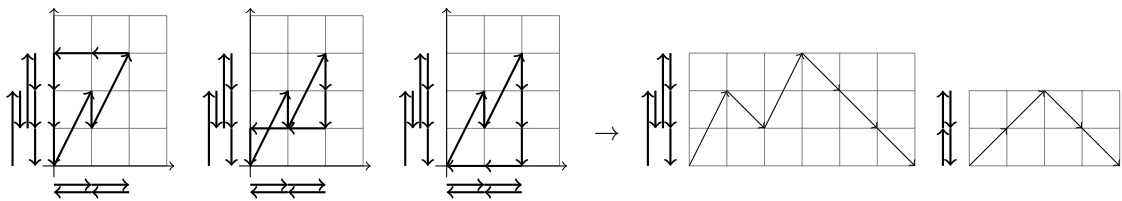
\begin{figure}[ht]
  \begin{tikzpicture}[scale = 0.5]
    \draw[->, thick] (-1.1, 0) to (-1.1, 2);
    \draw[->, thick] (-0.9, 2) to (-0.9, 1);
    \draw[->, thick] (-0.7, 1) to (-0.7, 3);
    \draw[->, thick] (-0.5, 3) to (-0.5, 2);
    \draw[->, thick] (-0.5, 2) to (-0.5, 1);
    \draw[->, thick] (-0.5, 1) to (-0.5, 0);

    \draw[->, thick] (0, -0.5) to (1, -0.5);
    \draw[->, thick] (1, -0.5) to (2, -0.5);
    \draw[->, thick] (2, -0.7) to (1, -0.7);
    \draw[->, thick] (1, -0.7) to (0, -0.7);

    \draw[help lines] (0, 0) grid (3, 4);
    \draw[->] (-0.3, 0) to (3.2, 0);
    \draw[->] (0, -0.3) to (0, 4.2);

    \coordinate (aux) at (0, 0);
    \foreach \i in {(1, 2), (0, -1), (1, 2), (-1, 0), (-1, 0), (0, -1), (0, -1), (0, -1)}
    \draw[->, thick] (aux)--++\i coordinate (aux);
  \end{tikzpicture} \quad
  \begin{tikzpicture}[scale = 0.5]
    \draw[->, thick] (-1.1, 0) to (-1.1, 2);
    \draw[->, thick] (-0.9, 2) to (-0.9, 1);
    \draw[->, thick] (-0.7, 1) to (-0.7, 3);
    \draw[->, thick] (-0.5, 3) to (-0.5, 2);
    \draw[->, thick] (-0.5, 2) to (-0.5, 1);
    \draw[->, thick] (-0.5, 1) to (-0.5, 0);

    \draw[->, thick] (0, -0.5) to (1, -0.5);
    \draw[->, thick] (1, -0.5) to (2, -0.5);
    \draw[->, thick] (2, -0.7) to (1, -0.7);
    \draw[->, thick] (1, -0.7) to (0, -0.7);

    \draw[help lines] (0, 0) grid (3, 4);
    \draw[->] (-0.3, 0) to (3.2, 0);
    \draw[->] (0, -0.3) to (0, 4.2);

    \coordinate (aux) at (0, 0);
    \foreach \i in {(1, 2), (0, -1), (1, 2), (0, -1), (0, -1), (-1, 0), (-1, 0), (0, -1)}
    \draw[->, thick] (aux)--++\i coordinate (aux);

  \end{tikzpicture} \quad
  \begin{tikzpicture}[scale = 0.5]
    \draw[->, thick] (-1.1, 0) to (-1.1, 2);
    \draw[->, thick] (-0.9, 2) to (-0.9, 1);
    \draw[->, thick] (-0.7, 1) to (-0.7, 3);
    \draw[->, thick] (-0.5, 3) to (-0.5, 2);
    \draw[->, thick] (-0.5, 2) to (-0.5, 1);
    \draw[->, thick] (-0.5, 1) to (-0.5, 0);

    \draw[->, thick] (0, -0.5) to (1, -0.5);
    \draw[->, thick] (1, -0.5) to (2, -0.5);
    \draw[->, thick] (2, -0.7) to (1, -0.7);
    \draw[->, thick] (1, -0.7) to (0, -0.7);

    \draw[help lines] (0, 0) grid (3, 4);
    \draw[->] (-0.3, 0) to (3.2, 0);
    \draw[->] (0, -0.3) to (0, 4.2);

    \coordinate (aux) at (0, 0);
    \foreach \i in {(1, 2), (0, -1), (1, 2), (0, -1), (0, -1), (0, -1), (-1, 0), (-1, 0)}
    \draw[->, thick] (aux)--++\i coordinate (aux);

  \end{tikzpicture}\quad \raisebox{2em}{$\rightarrow$}\quad
  \raisebox{1em}{\begin{tikzpicture}[scale = 0.5]
    \draw[->, thick] (-1.1, 0) to (-1.1, 2);
    \draw[->, thick] (-0.9, 2) to (-0.9, 1);
    \draw[->, thick] (-0.7, 1) to (-0.7, 3);
    \draw[->, thick] (-0.5, 3) to (-0.5, 2);
    \draw[->, thick] (-0.5, 2) to (-0.5, 1);
    \draw[->, thick] (-0.5, 1) to (-0.5, 0);

    \draw[help lines] (0, 0) grid (6, 3);
    \drawlatticepath{2, -1, 2, -1, -1, -1}
  \end{tikzpicture}}\quad
  \raisebox{1em}{\begin{tikzpicture}[scale = 0.5]
    \draw[->, thick] (-0.7, 0) to (-0.7, 1);
    \draw[->, thick] (-0.7, 1) to (-0.7, 2);
    \draw[->, thick] (-0.5, 2) to (-0.5, 1);
    \draw[->, thick] (-0.5, 1) to (-0.5, 0);

    \draw[help lines] (0, 0) grid (4, 2);
    \drawlatticepath{1, 1, -1, -1}
  \end{tikzpicture}}
  \caption{Three different $(1,2)$-Kreweras walks whose $x$- and $y$-projections yield the 
  same pair of paths. In total there are ten $(1, 2)$-Kreweras walks yielding this pair of paths.}
  \label{fig:diff-walk-same-dec}
\end{figure}

This is because there is no way of encoding the relative positions of
$(-1, 0)$ and $(0, -1)$ steps between pairs of $(a, b)$ steps, but in the $a$-Dyck and
$b$-Dyck path the corresponding steps are down-steps. Since $(a, b)$ steps result
in an up-step in both generalized Dyck paths, the number of down-steps
between pairs of up-steps in each path allows us to enumerate the
number of combinations of $(-1, 0)$ and $(0, -1)$ steps that give the
same $a$-Dyck and $b$-Dyck path, thus providing a basis for the
enumeration of generalized Kreweras walks. We hope that a better
understanding of the down-step statistics in $k_{t}$-Dyck paths will
assist in the enumeration of this family of generalized
Kreweras walks in the quarter plane.

\section{Structure of the paper and summary of results}\label{sec:results}

There are two different approaches used for investigating the number
of down-steps. Firstly, in Section~\ref{sec:downsteps} we bijectively
prove a recursion which gives an explicit sum formula for the total
number of down-steps between pairs of up-steps. The second approach in
Section~\ref{sec:generating-functions} makes use of the symbolic
decomposition of $k_t$-Dyck paths and uses bivariate generating
functions to count the number of down-steps between pairs of
up-steps. Combining results from both approaches leads to several
interesting identities and observations. See Table~\ref{tab:summary}
for a concise summary including references to related sequences in the
OEIS~\cite{OEIS:2022}.

Section~\ref{sec:applications} outlines connections of both $k_t$-Dyck paths and their down-step statistics to another
combinatorial object used in coding theory --- a family of perforation patterns of punctured convolutional codes --- which
were originally studied by B\'{e}gin \cite{Begin:1992:enumeration-punctured-codes}. These are binary matrices with a
prescribed number of 1's and 0's, and we show that they are bijective to $k_t$-Dyck paths for $0 \leq t < k$ (Theorem~\ref{thm:matrices_paths}).
As a consequence of this new link between binary matrices and lattice paths, a new interpretation of Catalan numbers is obtained; see Corollary~\ref{cor:new-catalan} and Figure~\ref{fig:dyck-cpc}.

The generating function for the number of down-steps between the $r$-th and $(r+1)$-th up-steps in $k_t$-Dyck 
paths can be found in Theorem~\ref{thm:gen-func-down-steps}. 
The asymptotic behavior related to the results in the table below can be found in a
concise format in Section~\ref{sec:asy}. The results are derived from the 
explicit formulas determined in Sections~\ref{sec:downsteps} and~\ref{sec:generating-functions}.

\begin{table}[htbp]
  \caption{Summary of explicit formulas for $s_{n,t,r}^{(k)}$, the
    total number of down-steps between the $r$-th and $(r+1)$-th
    up-steps in $k_{t}$-Dyck paths of length $(k+1)n$, where $k$,
    $n\geq 1$, $0\leq t\leq k$ and $r$ are integers. Restrictions on
    $r$ are listed in the table.}\label{tab:summary}
  \def\arraystretch{1.25}
  \begin{tabular}{|p{\textwidth}|}
    \hline
    \textboxa{Theorem~\ref{thm:downstep-recurrence} (1) and Theorem~\ref{thm:r=0}\hfill}\textboxb{\hfil $r = 0$\hfil}\textboxc{\hfill OEIS \OEIS{A001764}, \OEIS{A002293}, \OEIS{A002294}, \OEIS{A334785}--\OEIS{A334787}}\\
    \hline
    \begin{equation*}
      s_{n, t, 0}^{(k)} = \sum_{j=0}^{t-1} \frac{j+1}{(k+1)n+j+1}\binom{(k+1)n+j+1}{n} = \frac{k}{n+1}\binom{(k+1)n}{n} - \frac{k-t}{n+1}\binom{(k+1)n+t}{n}
    \end{equation*}\\
    \hline
    \textboxa{Corollary~\ref{cor:downsteps-explicit} and Corollary~\ref{cor:downsteps-explicit-gf}\hfill}\textboxb{\hfil $0 \leq r < n$\hfil}\textboxc{\hfill OEIS \OEIS{A007226}, \OEIS{A007228}, \OEIS{A124724}, \OEIS{A334640}--\OEIS{A334651}}\\
    \hline
    {\begin{align*}
      s_{n,t,r}^{(k)}
      & = \frac{k}{n+1}\binom{(k+1)n}{n} - \frac{k-t}{n+1}\binom{(k+1)n+t}{n}\\
      & \hspace{3cm} + \sum_{j=1}^{r} \frac{t+1}{(k+1)j + t + 1}\frac{k}{n-j+1} \binom{(k+1)j + t +1}{j} \binom{(k+1)(n-j)}{n-j}
     \end{align*}}\\
    \hline
    \textboxa{Corollary~\ref{cor:r-last-downsteps}\hfill}\textboxb{\hfil $1 \leq r \leq n$\hfil}\textboxc{\hfill OEIS \OEIS{A334976}--\OEIS{A334980}}\\
    \hline
    {\begin{align*}
       &s_{n,t,n-r}^{(k)}\\
       & \quad = \frac{t+1}{(k+1)(n+1)+t+1}\binom{(k+1)(n+1)+t+1}{n+1} - \frac{(k+1)(t+1)}{(k+1)n+t+1}\binom{(k+1)n+t+1}{n}  \\
       & \qquad - \sum_{j=2}^{r} \frac{1}{j-1}\binom{(j-1)(k+1)}{j}\,\frac{t+1}{(k+1)(n-j+1) + t + 1}\binom{(k+1)(n-j+1) + t + 1}{n-j+1}
     \end{align*}}\\
    \hline
    \textboxa{Proposition~\ref{prop:last-downsteps} and Proposition~\ref{thm:down-steps-end}\hfill}\textboxb{\hfil $r = n$\hfil}\textboxc{\hfill OEIS \OEIS{A030983}, \OEIS{A334609}--\OEIS{A334612}, \OEIS{A334680}, \OEIS{A334682}, \OEIS{A334719}}\\
    \hline
    \begin{equation*}
      s_{n,t,n}^{(k)} =
      \frac{t+1}{(k+1)(n+1)+t+1}\binom{(k+1)(n+1)+t+1}{n+1} -
      \frac{(t+1)^2}{(k+1)n+t+1}\binom{(k+1)n+t+1}{n}
    \end{equation*}\\
    \hline
  \end{tabular}
\end{table}

\section{Down-step statistics: A bijective approach}\label{sec:downsteps}

The main result of this section is the recurrence relation for
$s_{n,t,r}$ given in \eqref{eq:downstep-recurrence} together with relevant
initial values given in~\eqref{eq:downstep-recurrence:ell0} and~\eqref{eq:downstep-recurrence:elevated}.  This
recurrence fully explains the combinatorial structure behind the
quantities given by $s_{n,t,r}$.

\begin{theorem}\label{thm:downstep-recurrence}
  Let $C_{n, j}$ be the generalized Fuss--Catalan numbers from \eqref{eq:gen-fuss-catalan}, $k\geq 1$ and $0 \leq t \leq k$ be fixed integers, and $n\geq 1$ be a
  positive integer. Then the following statements hold:
  \begin{enumerate}
  \item\label{itm:thm:downstep-recurrence:r0} The total number of down-steps before the first
    up-step in all $k_{t}$-Dyck paths of length $(k+1)n$ is given by
    \begin{equation}\label{eq:downstep-recurrence:ell0}
      s_{n, t, 0} = \sum_{j=0}^{t-1} C_{n,j}.
    \end{equation}
  \item\label{itm:thm:downstep-recurrence:elevated} The total number of down-steps between the first and second up-steps in all
    $k_{0}$-Dyck paths is given by
    \begin{equation}\label{eq:downstep-recurrence:elevated}
      s_{n, 0, 1} =  \sum_{j=0}^{k-1} C_{n-1, j} = \frac{k}{n} \binom{(k+1)(n-1)}{n-1}.
    \end{equation}
  \item\label{itm:thm:downstep-recurrence:rec} For all positive integers $r$ with $1\leq r\leq n$ and $(t, r) \neq (0, 1)$, the following recurrence
    relation\footnote{We use the Iverson bracket
      popularized in \cite[Chapter 2]{Graham-Knuth-Patashnik:1994}
      where $\iverson{A} = 1$ if $A$ is true, and $\iverson{A} = 0$
      if $A$ is false.} holds:
    \begin{equation}\label{eq:downstep-recurrence}
      s_{n,t,r} = s_{n,t,r-1} + C_{r, t} (s_{n-r+1, 0, 1} -
      t\iverson{r = n}).
    \end{equation}
  \end{enumerate}
\end{theorem}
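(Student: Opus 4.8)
I would prove the three parts in the order they are stated, establishing the combinatorial meaning of each formula via a decomposition of $k_t$-Dyck paths according to the positions of up-steps. The key structural tool throughout will be the observation that a $k_t$-Dyck path can be read as an initial run of down-steps (possibly empty), followed by the first up-step, followed by the remaining path. Since the path starts at height $0$ and may descend to $y=-t$, the initial segment before the first up-step consists of between $0$ and $t$ down-steps.

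For part~\eqref{itm:thm:downstep-recurrence:r0}, the plan is to fix the number $j$ of down-steps occurring before the first up-step, where $0 \le j \le t$. After these $j$ down-steps the path sits at height $-j$, and the remainder (from the first up-step onward) is exactly a path that starts at height $-j$, uses $n$ up-steps, stays weakly above $y=-t$, and returns to $0$. Shifting coordinates, such a tail is precisely a $k_{t-j}$-Dyck path, enumerated by $C_{n, t-j}$ from Proposition~\ref{prop:k_t:enum}. A path contributing $j$ down-steps before the first up-step therefore contributes $j$ to $s_{n,t,0}$, so summing the contribution over all paths gives $s_{n,t,0} = \sum_{j=0}^{t} j\, C_{n,t-j}$. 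Reindexing (replacing $t-j$ by the summation variable and using that the $j=0$ term vanishes) should recover $\sum_{j=0}^{t-1} C_{n,j}$; I would verify this reindexing carefully, as it is the one genuinely fiddly bookkeeping step here.

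For part~\eqref{itm:thm:downstep-recurrence:elevated}, I work with $t=0$, so the first up-step must occur immediately (there is no room for down-steps at the start). After the first up-step the path is at height $k$, and I would decompose the segment between the first and second up-steps: the down-steps occurring there bring the path down from height $k$, and the rest of the path from the first up-step onward is determined by a $k_j$-Dyck-like tail for the appropriate offset $j$. Counting the total number of such intermediate down-steps across all paths should yield $\sum_{j=0}^{k-1} C_{n-1,j}$ by the same mechanism as part~\eqref{itm:thm:downstep-recurrence:r0} applied to the elevated tail of $n-1$ up-steps; the closed form $\frac{k}{n}\binom{(k+1)(n-1)}{n-1}$ then follows from the telescoping/summation identity for generalized Fuss--Catalan numbers (the same identity already quoted in Table~\ref{tab:summary} for $r=0$, specialized to $t=k$).

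For the recurrence~\eqref{itm:thm:downstep-recurrence:rec}, which I expect to be the \textbf{main obstacle}, the idea is to relate $s_{n,t,r}$ to $s_{n,t,r-1}$ by isolating the contribution of down-steps lying strictly between the $r$-th and $(r+1)$-th up-steps. I would decompose each path at its $r$-th up-step: the prefix up to and including the $r$-th up-step is governed by a factor $C_{r,t}$ (a $k_t$-Dyck-like initial piece reaching the $r$-th up-step), while the portion from that up-step onward behaves like an elevated path with $n-r+1$ up-steps, so the down-steps between the $r$-th and $(r+1)$-th up-steps are counted by $s_{n-r+1,0,1}$, exactly as in part~\eqref{itm:thm:downstep-recurrence:elevated}. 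The difference $s_{n,t,r} - s_{n,t,r-1}$ should then equal this product. The delicate points are: establishing the factorization so that the prefix count is genuinely $C_{r,t}$ (here the conditions $0 \le t \le k$ are essential, to guarantee the prefix is a valid constrained path); handling the boundary case $r=n$, where the ``$(r+1)$-th up-step'' does not exist and the tail must terminate at height $0$ rather than continue, which is precisely what the correction term $-t\,\iverson{r=n}$ records; and ensuring no double-counting occurs across the decomposition. I would treat the generic case $1 \le r < n$ and the terminal case $r=n$ separately, verifying that the Iverson correction exactly accounts for the $t$ forced down-steps that cannot appear when the path must return to height $0$ immediately after the last up-step.
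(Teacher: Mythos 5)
There is a genuine error in your part~(\ref{itm:thm:downstep-recurrence:r0}), and it propagates to the rest of the proposal. The tail of a $k_t$-Dyck path after exactly $j$ initial down-steps starts at height $-j$ but ends at height $0$, so it has net displacement $+j$; no vertical shift can turn it into a path that starts \emph{and} ends on the same line, hence it is \emph{not} a $k_{t-j}$-Dyck path, and the number of paths with exactly $j$ initial down-steps is not $C_{n,t-j}$. Consequently your weighted sum $\sum_{j=0}^{t} j\, C_{n,t-j}$ is not a reindexing of $\sum_{j=0}^{t-1}C_{n,j}$; the two expressions genuinely differ. Concretely, for $k=t=2$, $n=1$ the three $2_2$-Dyck paths $UDD$, $DUD$, $DDU$ contribute $0+1+2=3=C_{1,0}+C_{1,1}$ to $s_{1,2,0}$, whereas your formula gives $1\cdot C_{1,1}+2\cdot C_{1,0}=4$; also there is exactly one path with one initial down-step, while $C_{1,1}=2$, so the claimed bijection already fails. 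The paper avoids precisely this trap: instead of weighting each path by $j$, it \emph{marks} one of the initial down-steps (so a path with $m$ initial down-steps is counted $m$ times), and for a mark on the $j$-th down-step it \emph{cuts the first $j$ down-steps and appends them to the end} before shifting upwards; only after this cyclic move does the path close up at its starting level and become a genuine $k_{t-j}$-Dyck path, giving $s_{n,t,0}=\sum_{j=1}^{t}C_{n,t-j}$. Since your part~(\ref{itm:thm:downstep-recurrence:elevated}) explicitly invokes ``the same mechanism,'' it needs the same repair: the paper proves it by cutting a $1$-marked $k_0$-Dyck path at the marked step and reading off an elevated $k_0$-Dyck path with $n-1$ up-steps (Figure~\ref{fig:d-dyck:falls-decomposition}), not by a direct weighted count.

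For part~(\ref{itm:thm:downstep-recurrence:rec}) the proposal is missing the central idea rather than containing a mere slip. You assert that $s_{n,t,r}-s_{n,t,r-1}$ ``should'' equal $C_{r,t}\bigl(s_{n-r+1,0,1}-t\iverson{r=n}\bigr)$ via a cut at the $r$-th up-step, but a difference of two counts has no a priori set-theoretic meaning, and the naive product factorization fails because which tails are admissible depends on the height at which the prefix ends. What the paper actually does is partition the $r$-marked paths according to the height $h$ at which the marked down-step ends: those with $h\geq k-t$ are put in bijection with $(r-1)$-marked paths by an explicit ``peak shift'' that relocates a path segment together with one up-step (Claim~\ref{clm:recurrence:shift-peaks}) --- this is what makes the term $s_{n,t,r-1}$ appear; those with $-t\leq h<k-t$ are decomposed, via a ``middle segment'' which is an elevated $k_0$-Dyck path with $n-r$ up-steps, into a pair consisting of a $k_t$-Dyck path with $r$ up-steps and that elevated path, giving $C_{r,t}\,s_{n-r+1,0,1}$ with the Iverson correction accounting for the $t$ impossible elevations when $r=n$ (Claim~\ref{clm:recurrence:middle-segment}). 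Without the partition by ending height and the peak-shift bijection, your outline cannot be closed; I would rebuild all three parts around the marked-path idea before attempting the recurrence.
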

We prove all of the statements of this theorem by means of combinatorial arguments such as
bijections and double counting. Additionally, we verify the recurrence relation for small values of
$n$, $k$, and $t$ in an associated SageMath~\cite{SageMath:2020:9.0} worksheet\footnote{The interactive worksheets
  related to this article can be found at
  \url{https://gitlab.aau.at/behackl/kt-dyck-downstep-code}.}.

Let $r$ be an integer where $0\leq r\leq n$. We introduce \emph{$r$-marked
$k_{t}$-Dyck paths} as $k_{t}$-Dyck paths where exactly one of the
down-steps between the $r$-th and the $(r+1)$-th up-steps is marked.
By construction, the number of $r$-marked $k_{t}$-Dyck paths of length
$(k+1)n$ is equal to $s_{n,t,r}$.

Also, notice that for $t=0$ we have
$s_{n,0,0}=0$, and~\eqref{eq:downstep-recurrence} does not produce
$s_{n,0,1}$. Therefore for $t=0$ initial values $s_{n,0,1}$ are
explicitly given in~\eqref{eq:downstep-recurrence:elevated}.

  \begin{proof}[of Theorem~\ref{thm:downstep-recurrence}~(\ref{itm:thm:downstep-recurrence:r0})]
    For $t=0$ we have $s_{n,0,0}=0$ since $k_{0}$-Dyck paths cannot start with a down-step.
    Accordingly, the sum in the right-hand side of \eqref{eq:downstep-recurrence:ell0} is empty.

    For $t>0$, we prove~\eqref{eq:downstep-recurrence:ell0} via a cyclic shift argument.
    Consider a $0$-marked $k_t$-Dyck path
    where the $j$-th down-step at the beginning of the path is marked.
    We cut out the first $j$ down-steps from the beginning of the path and attach them to the end of the path, removing the marking,
    and shift the path by $j$ units upwards (to make it start and end at the $x$-axis). In this way
    the $k_t$-Dyck path is transformed to an unmarked $k_{t-j}$-Dyck
    path. Conversely, given a $k_{t-j}$-Dyck path, the corresponding $0$-marked $k_t$-Dyck path
    can be constructed by marking the last step (which is always a down-step since $j > 0$ and $t-j<t\le k$), removing the
    final $j$ down-steps, adding them back to the beginning of the path,
    and relocating the newly constructed path such that it starts at the origin.

    This proves that for $t>0$, $0$-marked $k_{t}$-Dyck paths are in bijection with
    the tuples in $\cup_{1\leq j\leq t}\{j\} \times \mathcal{S}_{t-j}$,
    where $j$ is required to make the union disjoint and to remember the
    number of steps moved to the end. Since
    $\mathcal{S}_{j}$ is enumerated by the generalized Fuss--Catalan number $C_{n,j}$
    (see Proposition~\ref{prop:k_t:enum}), this
    proves~\eqref{eq:downstep-recurrence:ell0}.
  \end{proof}

  Within the
  following proof, we show that the numbers $s_{n,0,1}$ also enumerate an associated
  class of lattice walks which occurs in the proof of the
  recurrence in~\eqref{eq:downstep-recurrence} (this also explains why
  these special values occur prominently in the recurrence relation).
  This class is introduced in the following definition.
  \begin{definition}[Elevated $k_0$-Dyck path]\label{def:elevated}
    An \emph{elevated} $k_0$-Dyck path is a lattice path
    consisting of \emph{up-steps} $(1,k)$ and \emph{down-steps} $(1,-1)$,
    that starts and ends at the line $y=j$ for any integer $0\leq j< k$,
    and stays weakly above the $x$-axis. The number $j$ is called the \emph{elevation} of the path.
  \end{definition}

  The following observation relating $k_{t}$-Dyck paths to elevated
  $k_{0}$-Dyck paths is straightforward; nevertheless we choose to state
  it formally because we will reference it several times
  later on.

  \begin{observation}\label{obs:elevated}
   For each $0 \leq j < k$,
    the vertical shift by $j$ units is a bijection between
    $k_j$-Dyck paths
    and elevated $k_0$-Dyck paths of elevation $j$.
    Therefore, the number of elevated $k_0$-Dyck paths of elevation $j$ is $C_{n,j}$
    (see Proposition~\ref{prop:k_t:enum}).
  \end{observation}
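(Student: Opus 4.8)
The plan is to exhibit the vertical shift explicitly and verify that it transports the defining constraints of one family onto the other. Concretely, I would let $\phi$ denote the map that adds $j$ to the $y$-coordinate of every point of a lattice path, and $\psi$ the map that subtracts $j$. These are mutually inverse, and each clearly preserves the step set $\{(1,k),(1,-1)\}$, since a vertical translation alters neither the horizontal nor the vertical displacement of any individual step.

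First I would check that $\phi$ sends a $k_j$-Dyck path to an elevated $k_0$-Dyck path of elevation $j$. A $k_j$-Dyck path $P$ starts at $(0,0)$, ends on the line $y=0$, and stays weakly above $y=-j$. Applying $\phi$, the image $\phi(P)$ starts and ends on the line $y=j$, and the lower boundary $y=-j$ is translated to $y=0$, so $\phi(P)$ stays weakly above the $x$-axis. By Definition~\ref{def:elevated} this is exactly an elevated $k_0$-Dyck path of elevation $j$, and the requirement $0\le j<k$ matches the admissible range of elevations.

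For the reverse direction, the same computation with $\psi$ shows that an elevated $k_0$-Dyck path of elevation $j$ is translated to a path that starts and ends on $y=0$ and stays weakly above $y=-j$, i.e.\ a $k_j$-Dyck path. Since $\phi$ and $\psi$ are mutually inverse, this establishes the bijection. Finally, because $0\le j<k$ lies within the range $0\le t\le k$, Proposition~\ref{prop:k_t:enum} applies and enumerates $k_j$-Dyck paths with $n$ up-steps by $C_{n,j}$; transporting this count through the bijection yields the stated enumeration of elevated $k_0$-Dyck paths of elevation $j$.

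Since the argument is purely a relabelling of constraints under a translation, there is no substantive obstacle here. The only point deserving (minor) attention is confirming that the range $0\le j<k$ indeed satisfies the hypothesis $0\le t\le k$ of Proposition~\ref{prop:k_t:enum}, so that the generalized Fuss--Catalan enumeration may legitimately be invoked through the bijection.
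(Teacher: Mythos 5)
Your proof is correct and is exactly the argument the paper has in mind: the paper states this as a straightforward observation (the vertical shift \emph{is} the bijection), and you have simply written out the routine verification that the translation carries the constraints of Definition~\ref{def:elevated} onto those of a $k_j$-Dyck path and back, then invoked Proposition~\ref{prop:k_t:enum}, noting that $0\le j<k$ lies in its admissible range $0\le t\le k$. Nothing is missing and nothing differs in substance from the paper's treatment.
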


  \begin{proof}[of Theorem~\ref{thm:downstep-recurrence}~(\ref{itm:thm:downstep-recurrence:elevated})]
        The path decomposition illustrated in
    Figure~\ref{fig:d-dyck:falls-decomposition} shows that
    $1$-marked $k_{0}$-Dyck paths with $n$ up-steps are
    in bijection with elevated $k_{0}$-Dyck paths\footnote{
      Observe that as an elevated path with elevation $j$ starts
      at $(0, j)$, the elevation is inherently stored within the
      path itself. This is in contrast to the paths from $\mathcal{S}_j$
      discussed in part~(\ref{itm:thm:downstep-recurrence:r0}) where
      we formally needed to construct a tuple storing the number of shifted
      steps $j$ to make the union over several path classes $\mathcal{S}_{t-j}$
      disjoint.
    } with $n-1$ up-steps: the middle segment starting at the end of the
    marked down-step and reaching until the final return to the same height
    is an elevated $k_0$-Dyck path with $n-1$ up-steps.
    \begin{figure}
      \begin{tikzpicture}[scale=0.55]
        \draw[help lines] (0,0) grid (10,5);
        \draw[thick, -{latex}] (-0.5, 0) -- (10.5,0);
        \draw[thick, -{latex}] (0, -0.5) -- (0, 5.5);
        \draw[->] (0,0) -- (1, 4);
        \draw[->] (1,4) -- (2,3);
        \draw[->, very thick] (2,3) -- (3,2);
        \draw[dashed] (3,2) .. controls (4,0) and (5,5) .. (6,0) .. controls (7,5) .. (8,2);
        \draw[->] (8,2) -- (9,1);
        \draw[->] (9,1) -- (10,0);
        \draw [thick, decoration={
          brace,
          mirror,
          raise=0.2cm
        },
        decorate
        ] (3,0) -- (8,0)
        node [pos=0.5,anchor=north,yshift=-0.3cm] {middle segment};
      \end{tikzpicture}
      \caption{Decomposition of a marked $k_{0}$-Dyck path: The
        middle segment corresponds to an elevated $k_{0}$-Dyck path.}
      \label{fig:d-dyck:falls-decomposition}
    \end{figure}
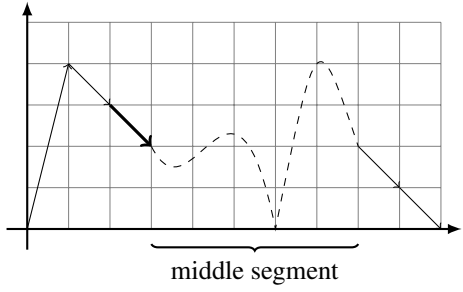
    
    This proves that there are equally many down-steps after the first
    up-step in all $k_0$-Dyck paths with $n$ up-steps as elevated $k_0$-Dyck
    paths with $n-1$ up-steps. By Observation~\ref{obs:elevated} and
    summing over all possible elevations $0\leq j < k$, this number is
    $\sum_{j=0}^{k-1} C_{n, j}$. Moreover, it
    was proved by Gu, Prodinger, and Wagner in~\cite[Section
    3]{Gu-Prodinger-Wagner:2010:k-plane-trees} that
    the number of elevated $k_{0}$-Dyck paths with $n-1$ up-steps and elevation $j$ with $0\le j<k$
     is
     \begin{equation}\label{eq:GPW}
     \frac{k}{n} \binom{(k+1)(n-1)}{n-1},
     \end{equation}
       which is equal to the expression on the right-hand side
    of~\eqref{eq:downstep-recurrence:elevated}.
    This completes the proof.
  \end{proof}

The introduction of elevated $k_0$-Dyck paths together with the bijection
constructed in the previous proof allows us to extend the statement in
Theorem~\ref{thm:downstep-recurrence}~(\ref{itm:thm:downstep-recurrence:elevated}).

\begin{corollary}\label{cor:1-marked-elevated}
  Let $k \geq 1$ be a fixed integer, and let $n\geq 1$ be a positive integer.
  Then $s_{n, 0, 1} = \frac{k}{n} \binom{(k+1)(n-1)}{n-1}$ enumerates
  $1$-marked $k_0$-Dyck paths with $n$ up-steps. Furthermore, these paths
  are in bijection with elevated $k_0$-Dyck paths with $n-1$ up-steps.
\end{corollary}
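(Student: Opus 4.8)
The plan is to recognize that this corollary collects statements already established within the proof of Theorem~\ref{thm:downstep-recurrence}~(\ref{itm:thm:downstep-recurrence:elevated}), so the work reduces to assembling them and stating the bijection on its own. First I would recall the defining property of marked paths: a $1$-marked $k_0$-Dyck path with $n$ up-steps is precisely a $k_0$-Dyck path of length $(k+1)n$ together with a distinguished down-step lying between its first and second up-steps. Summing over all such paths, the number of $1$-marked paths is therefore the total number of down-steps in that position, which is $s_{n,0,1}$ by Definition~\ref{def:downsteps}. The explicit value $s_{n,0,1} = \frac{k}{n}\binom{(k+1)(n-1)}{n-1}$ is then exactly equation~\eqref{eq:downstep-recurrence:elevated}, so no fresh computation is required.

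For the bijection I would reuse the decomposition of Figure~\ref{fig:d-dyck:falls-decomposition}. The forward map sends a $1$-marked $k_0$-Dyck path with $n$ up-steps to the path segment that begins immediately after the marked down-step and ends at the last return to that level; by the argument in the proof of part~(\ref{itm:thm:downstep-recurrence:elevated}) this segment is an elevated $k_0$-Dyck path with $n-1$ up-steps whose elevation $j$ equals the height at which the marked step lands (necessarily $0 \le j < k$, since the marked step is the $(k-j)$-th down-step after the opening up-step). For the inverse, given an elevated $k_0$-Dyck path of elevation $j$ with $n-1$ up-steps, I would prepend an up-step followed by $k-j$ down-steps --- marking the last --- and append $j$ down-steps, recovering a $1$-marked $k_0$-Dyck path with $n$ up-steps.

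The only point demanding care is checking that these two maps are mutually inverse and that the reconstructed object is a genuine $k_0$-Dyck path. This is settled by the height bookkeeping already carried out for part~(\ref{itm:thm:downstep-recurrence:elevated}): the prepended up-step takes the height from $0$ to $k$, the following $k-j$ down-steps descend to height $j$ to meet the elevated path, and the appended $j$ down-steps bring its endpoint back to the axis, with the marked step correctly positioned between the first two up-steps. Since there is no new combinatorial content, the main obstacle is purely expository --- to phrase the result so the bijection stands alone for later reference, observing as in the proof of part~(\ref{itm:thm:downstep-recurrence:elevated}) that an elevated path stores its elevation intrinsically in its starting height and so needs no auxiliary bookkeeping.
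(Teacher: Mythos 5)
Your proposal is correct and follows essentially the same route as the paper: the corollary is stated there as an immediate consequence of the proof of Theorem~\ref{thm:downstep-recurrence}~(\ref{itm:thm:downstep-recurrence:elevated}), whose decomposition (Figure~\ref{fig:d-dyck:falls-decomposition}) and inverse construction you reuse verbatim, together with the observation that $1$-marked paths are counted by $s_{n,0,1}$. The extra bookkeeping you supply (the elevation $j$ being the landing height of the marked step, and the mutual-inverse check) is exactly the content the paper leaves implicit, so there is no gap and no genuinely new argument.
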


\begin{remark}\label{rem:bijective-proof}
Notice that our reasoning contains a bijective proof of the nice summation formula
  \begin{equation}\label{eq:sum-of-k_t}
    \sum_{j=0}^{k-1} C_{n, j}
    = \sum_{j=0}^{k-1} \frac{j+1}{(k+1)n + j + 1}\binom{(k+1)n + j + 1}{n}
    = \frac{k}{n+1} \binom{(k+1)n}{n}.
  \end{equation}
  As can be expected from its simple form, this identity is well-known and
  can also be proved using algebraic techniques.
  For some fixed values of $k$ and $n$, these numbers can be found in
  OEIS entries \OEIS{A007226}, \OEIS{A007228}, \OEIS{A124724}.

  Moreover, in the formula $s_{n, t, 0} = \sum_{j=0}^{t-1} C_{n,j}$ we
  have a partial sum of $s_{n+1, 0, 1} = \sum_{j=0}^{k-1} C_{n, j}$. 
  In the special case $t=k$ this yields
  $s_{n,k,0}=s_{n+1, 0, 1}$. In Proposition~\ref{prop:snkr} we return
  to and generalize this equality, and provide a bijective proof.
\end{remark}

Now that we have proved the initial values of $s_{n, t, r}$, we
proceed with the proof of the recurrence.

  \begin{proof}[of Theorem~\ref{thm:downstep-recurrence}~(\ref{itm:thm:downstep-recurrence:rec})]
    As before, we represent the left-hand side of~\eqref{eq:downstep-recurrence},
    $s_{n, t, r}$, by the number of $r$-marked $k_{t}$-Dyck paths of
    length $(k+1)n$ (consisting of $n$ up-steps).  In
    Claims~\ref{clm:recurrence:shift-peaks}
    and~\ref{clm:recurrence:middle-segment} we prove that the
    right-hand side of~\eqref{eq:downstep-recurrence}
    is also equal to $s_{n, t, r}$, with each summand counting
    paths whose marked step ends in a specified height interval.
  \begin{claim}\label{clm:recurrence:shift-peaks}
    The term $s_{n,t,r-1}$ enumerates $r$-marked $k_{t}$-Dyck paths
    where the marked step ends on height $h$ where $h \geq k-t$.
  \end{claim}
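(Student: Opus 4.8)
The plan is to prove the claim by exhibiting an explicit bijection between the $r$-marked $k_t$-Dyck paths of length $(k+1)n$ whose marked down-step ends at height $h\geq k-t$ and the set of \emph{all} $(r-1)$-marked $k_t$-Dyck paths of the same length. Since, by the definition of $r$-marked paths, the latter set is counted by $s_{n,t,r-1}$, such a bijection proves the claim.

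First I would fix local coordinates around the $r$-th up-step. Let $H$ be the height of the path immediately before the $r$-th up-step (so $H\geq -t$; for $r=1$ one simply has $H=0$ and the initial down-steps play the role of region $r-1$), and suppose the marked down-step is the $i$-th down-step among those between the $r$-th and $(r+1)$-th up-steps, so that it ends at height $h=H+k-i$. The hypothesis $h\geq k-t$ then reads exactly $i\leq H+t$. The map itself is to \emph{swap} the $r$-th up-step with the marked down-step, leaving every other step fixed, in particular the $i-1$ down-steps lying strictly between them. I would first check that the image is again a valid $k_t$-Dyck path: the only heights that change are those strictly between the two swapped positions, which after the swap read $H-1,H-2,\ldots,H-i$ in place of $H+k-1,\ldots,H+k-i=h$. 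Their minimum is $H-i$, and $H-i\geq -t$ holds precisely when $i\leq H+t$, i.e.\ exactly under the hypothesis $h\geq k-t$. Since the height immediately after this block equals $H+k-i=h$ both before and after the swap, the entire remainder of the path is unchanged. As only down-steps lie between the two swapped steps, the order of the up-steps is preserved, so the relocated up-step is still the $r$-th one; the marked down-step now precedes it and therefore lies between the $(r-1)$-th and $r$-th up-steps, making the image an $(r-1)$-marked path.

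To establish that this is a bijection I would describe the inverse: given an $(r-1)$-marked path, swap its marked down-step with the up-step immediately following it, which is the $r$-th up-step. The same height computation, run in reverse, shows that the validity constraint $y\geq -t$ of the given $(r-1)$-marked path already forces the resulting marked step to end at height $\geq k-t$; this is the crucial observation, since the floor $y=-t$ of the original path is precisely what guarantees that the image lands in the claimed source set. The two swaps manifestly undo one another, so the map is a bijection.

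The point I expect to require the most care, and would emphasize, is that one must genuinely \emph{swap} the two steps rather than slide the up-step rightward across the marked step. Sliding would absorb the first $i-1$ down-steps of region $r$ together with the marked step into region $r-1$ while pushing the marked step against the up-step, which merges region $r-1$ with the head of region $r$ and erases the record of where region $r-1$ ended; this destroys injectivity (distinct sources collapse to the same image). Swapping instead keeps the marked step as the \emph{first} of the relocated down-steps, so that the position of the marked step within region $r-1$ encodes the size of the old region $r-1$ while the down-steps following it encode the old prefix of region $r$, allowing the original path to be reconstructed uniquely.
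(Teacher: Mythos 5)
Your proof is correct, and it takes a genuinely different---and in fact simpler---route than the paper. The paper's ``peak shift'' bijection is global: starting from an $(r-1)$-marked path, it walks backwards from the last down-step of the region between the $(r-1)$-th and $r$-th up-steps until the path first revisits that step's ending height, with a separate case (an up-step must be ``borrowed'') when the beginning of the path is reached first, and then reinserts the entire traversed segment at the rightmost position above height $k-t$ after the former $r$-th up-step; invertibility hinges on both the case distinction and the careful choice of reinsertion point. Your map is instead a single transposition of two steps, the $r$-th up-step and the marked down-step, fixing everything else; it needs no case analysis, and the height bookkeeping makes the key equivalence transparent: the swapped path respects the floor $y=-t$ exactly when $i\le H+t$, which is exactly the hypothesis $h=H+k-i\ge k-t$, while conversely the floor constraint on an arbitrary $(r-1)$-marked path is precisely what forces the image of the inverse swap to have its mark end at height $\ge k-t$, giving surjectivity. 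Since the two swaps visibly undo one another, this yields the equinumerosity that the claim asserts, which is all that is needed. Note that the two bijections are genuinely different functions: for the paper's example with $(k,t)=(2,1)$, the $2$-marked path $D\,U\,D\,U\,D\,D\,D^{*}\,U\,D$ corresponds under your swap to the $3$-marked path $D\,U\,D\,U\,D\,D\,U\,D^{*}\,D$, whereas the paper's shift sends it to $D\,U\,U\,D\,U\,D\,D\,D^{*}\,D$. Two harmless slips you should correct: for $r=1$ the height before the first up-step is $H=-j$, with $j$ the number of initial down-steps, not $H=0$ (your argument only ever uses $H\ge -t$, so nothing breaks); and the pre-swap interior heights are $H+k,\,H+k-1,\dots,H+k-i+1$ rather than $H+k-1,\dots,H+k-i$ (again immaterial, since what matters is that the post-swap minimum is $H-i$ and that the endpoint height $h$ is unchanged).
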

  \begin{proof}
    We construct a bijection that maps $(r-1)$-marked $k_{t}$-Dyck
    paths to $r$-marked $k_{t}$-Dyck paths where the marked down-step
    ends at a height of at least $k-t$. We do this by shifting the
    $(r-1)$-th up-step so that it becomes the $r$-th one. Effectively,
    this moves a peak\footnote{A peak in a $k_{t}$-Dyck path is
      an up-step followed by a
    down-step.} in the path to another position, thus we
    will refer to the procedure as the ``peak shift'' bijection.

    Consider any $(r-1)$-marked $k_{t}$-Dyck path. Let $h$
    be the height on which the last down-step between the $(r-1)$-th
    and the $r$-th up-steps ends, then traverse the path from the
    end of this last downstep towards the beginning. The traversal ends if
    \begin{itemize}
    \item we reach a height of $h$ or below again (as in
      Figure~\ref{fig:peak-shift:normal}), or
    \item we reach the
    beginning of the path without doing so (as in
      Figure~\ref{fig:peak-shift:special}).
    \end{itemize}

    In the second case, prepend the $r$-th up-step to
    the beginning of the traversed segment. This ensures that the
    traversed segment starts at a lower or equal
    height compared to its end.

    The path is then transformed by removing the traversed segment
    and inserting it at the 
    rightmost position located above height $k-t$ after the former $r$-th up-step and
    before the $(r+1)$-th up-step (should it exist).
    This results in an $r$-marked path, with the marked step
    guaranteed to end above or at height $k-t$.

    \begin{figure}[ht]
      \begin{subfigure}[t]{0.48\linewidth}
      \begin{tikzpicture}[scale=0.39]
        \draw[help lines] (0,-1) grid (9,4);
        \draw[thick, ->] (-0.25, 0) -- (9.5, 0);
        \draw[thick, ->] (0, -1.25) -- (0, 4.5);
        \drawlatticepath{-1,2,-1,2,-1,-1,-1,2,-1}
        \draw[ultra thick, black, ->] (6,0) -- (7,-1);
        \draw[decoration={brace,mirror,raise=3pt},decorate] (1,-1) --
        node[below=4pt] {\scriptsize traversed segment} (7,-1);
      \end{tikzpicture}
      \begin{tikzpicture}[scale=0.39]
        \draw[help lines] (0,-1) grid (9,4);
        \draw[thick, ->] (-0.25, 0) -- (9.5, 0);
        \draw[thick, ->] (0, -1.25) -- (0, 4.5);
        \drawlatticepath{-1, 2, 2, -1, 2, -1, -1, -1, -1}
        \draw[ultra thick, black, ->] (7,2) -- (8,1);
        \draw[decoration={brace,mirror,raise=3pt},decorate] (2,-1) --
        node[below=4pt] {\scriptsize inserted segment} (8,-1);
      \end{tikzpicture}
      \subcaption{The ending height of the down-step sequence containing the
        marked down-step is reached earlier in the
        path.}\label{fig:peak-shift:normal}
      \end{subfigure}\hfill
      \begin{subfigure}[t]{0.5\linewidth}
        \begin{tikzpicture}[scale=0.39]
        \draw[help lines] (-1,-1) grid (9,3);
        \draw[thick, ->] (-1.25, 0) -- (9.5, 0);
        \draw[thick, ->] (0, -2.25) -- (0, 3.5);
        \drawlatticepath{2,-1,2,-1,-1,-1}
        \draw[dashed, ->] (7, -1) -- (8, 1);
        \draw[->] (8,1) -- (9,0);
        \draw[dashed, ->] (-1, -2) -- (0,0);
        \draw[ultra thick, black, ->] (6,0) -- (7,-1);
        \draw[decoration={brace,mirror,raise=3pt},decorate] (-1,-2) --
        node[below=4pt] {\scriptsize traversed segment} (7,-2);
      \end{tikzpicture}
      \begin{tikzpicture}[scale=0.39]
        \draw[help lines] (0,0) grid (9,5);
        \draw[thick, ->] (-0.25, 0) -- (9.5, 0);
        \draw[thick, ->] (0, -0.25) -- (0, 5.5);
        \drawlatticepath{2, 2, -1, 2, -1, -1, -1, -1, -1}
        \draw[ultra thick, black, ->] (7,2) -- (8,1);
        \draw[decoration={brace,mirror,raise=3pt},decorate] (0,0) --
        node[below=4pt] {\scriptsize inserted segment} (8,0);
      \end{tikzpicture}
      \subcaption{The ending height of the down-step sequence containing
        the marked down-step is not reached earlier in the path; the
        traversed segment extends until the beginning of the
        path. The dashed arrow represents the ``borrowed'' up-step.}\label{fig:peak-shift:special}
      \end{subfigure}
      \caption{Two examples of the ``peak shift'' bijection mapping
        $(r-1)$-marked paths to $r$-marked paths whose marked step
        ends on height $k-t$ or above. The parameters in the
        illustrations correspond to $k = 2$, $t = 1$, and
        $n = r = 3$.}
      \label{fig:peak-shift}
    \end{figure}
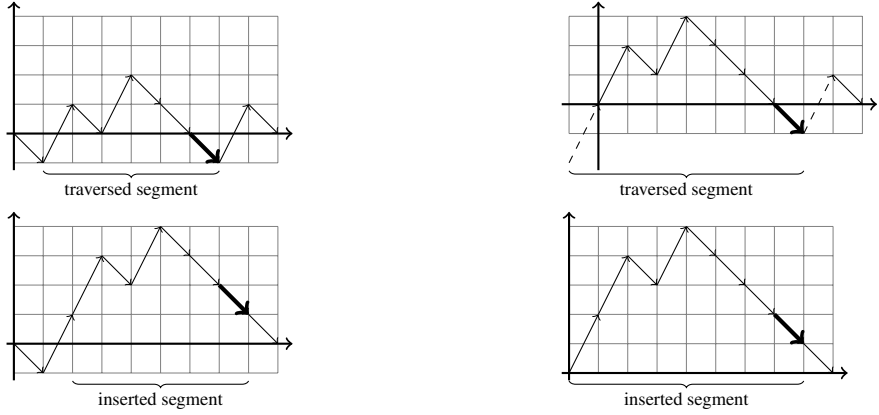

    This transformation can always be reversed: consider an $r$-marked
    $k_t$-Dyck path whose marked down-step ends on a height of at least $k-t$.
    By construction, in the maximal consecutive sequence of down-steps which contains 
    the marked down-step, all steps ending at a height of at least $k - t$ 
    must have been included in the relocated traversed segment; let $h$ be
    this height. Start at the down-step ending at height $h$ and traverse the path towards
    the beginning until you reach height $h$ (or below)
    again.

    If the traversed segment spans until the beginning of the path,
    or if there are no further up-steps before the traversed segment, then
    this instance corresponds to the second case --- otherwise
    it corresponds to the first case. Then, the transformation
    can be reversed by relocating the traversed segment (and 
    for the second case additionally adjusting the initial up-step).

    This establishes that the transformation is indeed a bijection,
    and thus proves the claim.
  \end{proof}

  \begin{claim}\label{clm:recurrence:middle-segment}
    The term $C_{r, t} (s_{n-r+1, 0, 1} - t\iverson{r = n})$
    enumerates $r$-marked $k_{t}$-Dyck paths where the marked step ends
    on height $h$ where $-t \leq h < k - t$.
  \end{claim}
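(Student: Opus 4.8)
The plan is to prove the claim bijectively, by showing that $C_{r,t}\,(s_{n-r+1,0,1}-t\iverson{r=n})$ counts exactly the $r$-marked $k_t$-Dyck paths of length $(k+1)n$ whose marked down-step ends on a height $h$ with $-t\le h<k-t$. Setting $e:=h+t$, this range corresponds precisely to $0\le e<k$, which is what lets the elevated $k_0$-Dyck paths of Observation~\ref{obs:elevated} come into play. Concretely I would biject such paths with pairs $(P,M)$, where $P$ is a $k_t$-Dyck path with $r$ up-steps (of which there are $C_{r,t}$ by Proposition~\ref{prop:k_t:enum}) and $M$ is an elevated $k_0$-Dyck path with $n-r$ up-steps and elevation $e$; summing over the admissible elevations then produces the factor $\sum_{e=0}^{k-1}C_{n-r,e}=s_{n-r+1,0,1}$, with the term $-t\iverson{r=n}$ recording the elevations that cannot occur when $r=n$.

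For the main case $h\ge 0$ (equivalently $e\ge t$) I would cut the path twice: at the marked down-step, and at the last return to height $h$. The middle segment between these cuts starts and ends at height $h$, stays weakly above $-t$, and carries all $n-r$ up-steps that follow the marked step; indeed, after the last return to $h$ the path can only descend, by the same monotonicity argument used in Theorem~\ref{thm:downstep-recurrence}~(\ref{itm:thm:downstep-recurrence:elevated}), so the tail consists of exactly $h$ down-steps. Shifting this middle segment up by $t$ turns it into an elevated $k_0$-Dyck path $M$ of elevation $e$ with $n-r$ up-steps, while deleting it and concatenating the prefix with the $h$-step tail yields a $k_t$-Dyck path $P$ with $r$ up-steps. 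The inverse reinserts $M$ into the final descent of $P$ at its unique point of height $h$ and marks the entering step; this is well defined because the final descent of $P$ is monotone and runs from a height $\ge k-t>h$ down to $0$.

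The delicate case is $h<0$ (equivalently $e<t$), which can occur only for $r<n$, since the marked step of a path with $r=n$ lies on the final descent and hence ends at height $\ge 0$. When $h<0$ the path never returns to the negative level $h$, so the previous cut is unavailable, and I would instead use a borrowing argument in the spirit of the special case of Claim~\ref{clm:recurrence:shift-peaks}. The prefix up to and including the marked step splits as a genuine $k_t$-Dyck path $P$ with $r$ up-steps, followed by the terminal run of $-h=t-e$ down-steps — with the marked step as its last step — that carries the path from its final visit to height $0$ down to $h$ (there are no up-steps after that final visit, again by monotonicity). Transferring this down-run to the end of the portion of the path after the marked step (which runs from $h$ to the terminal height $0$) closes that portion into an excursion from $h$ back to $h$, i.e.\ an elevated $k_0$-Dyck path $M$ of elevation $e$ with $n-r$ up-steps; the inverse reads $t-e$ off the elevation of $M$, splits off the terminal run of $t-e$ down-steps starting at $M$'s last visit to height $h+(t-e)$, attaches it (with its last step marked) to $P$, and appends the remainder.

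Combining both cases, the marked paths with $-t\le h<k-t$ are in bijection with the pairs $(P,M)$. Since $P$ ranges over $C_{r,t}$ many $k_t$-Dyck paths with $r$ up-steps and $M$ over all elevated $k_0$-Dyck paths with $n-r$ up-steps, of which there are $\sum_{e=0}^{k-1}C_{n-r,e}=s_{n-r+1,0,1}$ by Observation~\ref{obs:elevated} and Corollary~\ref{cor:1-marked-elevated}, the count is $C_{r,t}\,s_{n-r+1,0,1}$, reduced by the $t$ forbidden elevations $e<t$ exactly when $r=n$. The step I expect to be the main obstacle is the below-axis case $h<0$: one must check that the down-run transfer is reversible and preserves heights, so that the extracted $P$ genuinely stays above $-t$ and the reconstructed path is a valid $k_t$-Dyck path with its marked step lying between the $r$-th and $(r+1)$-th up-steps, and one must confirm that precisely these configurations vanish when $r=n$, which is what the correction $-t\iverson{r=n}$ encodes.
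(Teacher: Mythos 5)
Your proposal is correct and follows essentially the same route as the paper's proof: you decompose each marked path into a $k_t$-Dyck path with $r$ up-steps together with an elevated $k_0$-Dyck path (the ``middle segment''), split into the cases $h \geq 0$ and $h < 0$ exactly as the paper does (appending/transferring the terminal down-run when the marked step ends below the $x$-axis), and explain the correction $-t\iverson{r=n}$ by the impossibility of elevations below $t$ on the final descent. The only cosmetic difference is that you count the middle segments by summing $\sum_{e=0}^{k-1}C_{n-r,e}$ over elevations via Observation~\ref{obs:elevated}, while the paper cites Corollary~\ref{cor:1-marked-elevated} directly; these are the same count.
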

  \begin{proof}
    We use a similar ``middle segment''-decomposition as in the proof
    of Theorem~\ref{thm:downstep-recurrence}~(\ref{itm:thm:downstep-recurrence:elevated}).
    
    Let us first consider the case of $r < n$.
    If the marked step ends above or on the $x$-axis,
    we decompose the path into a ``middle segment'' starting after the
    marked down-step (which means it has a height between $0$ and
    $k-t-1$) and ending with the final return to the same height it started at.
    If the marked step ends below the $x$-axis, the middle
    segment extends until the end of the path. To have the segment end on
    the same height it starts at in both cases, we remove the down-steps from
    the $x$-axis until the start of the middle segment and attach them again
    at the end of the middle segment.
    This middle segment is an elevated $k_{0}$-Dyck path with
    $n - r > 0$ up-steps which has been shifted downwards by $t$ units,
    which is visualized in Figure~\ref{fig:path-decomposition}. By 
    Corollary~\ref{cor:1-marked-elevated}, the number of such paths
    is~$s_{n-r+1, 0, 1}$.
    Now consider the path obtained after removing the middle segment: it
    is precisely a $k_t$-Dyck path with $r$ up-steps, and there are $C_{r,t}$
    such paths. Note that we can ignore the marking because its
    location is already encoded by the elevation of the middle segment.

    For the case of $r = n$ we can use the same decomposition --- however,
    in this case the ``middle segment'' is empty. There are $s_{1, 0, 1} = k$ such paths
    (the empty path with different elevations) in total, but as the
    final sequence of down-steps in the original path ends at height $y=0$,
    the shifted elevated paths with elevation $0$, $1$, \ldots, $t-1$ cannot
    occur --- which is why these $t$ paths have to be excluded. This
    explains the occurrence of the Iverson bracket in~\eqref{eq:downstep-recurrence}.

    \begin{figure}[ht]
      \begin{tikzpicture}[scale=0.45]
        \draw[help lines] (0,-1) grid (16,4);
        \draw[thick, ->] (-0.25, 0) -- (16.5, 0);
        \draw[thick, ->] (0, -1.25) -- (0, 4.5);
        \drawlatticepath{-1,3,-1,3,-1,-1,-1,-1,3,-1,-1,-1,-1,3,-1,-1}
        \draw[ultra thick, black, ->] (6,2) -- (7,1);
        \draw[decoration={brace,mirror,raise=3pt},decorate] (7,-1) --
        node[below=4pt] {\scriptsize middle segment} (15,-1);
      \end{tikzpicture}
      \caption{Illustration of the ``middle segment'' (an elevated
        $3_{0}$-Dyck path of elevation 2) in a $2$-marked
        $3_{1}$-Dyck path. Removing the segment yields a $3_{1}$-Dyck path with 2 up-steps.
        The marked step can be identified uniquely from the elevation of the middle segment,
        which allows us to forget the marking.}
      \label{fig:path-decomposition}
    \end{figure}
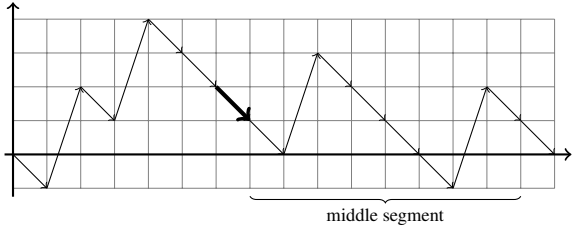

    By following the steps above in reverse order,
    the original path can be reconstructed and so the decomposition is a bijection
    between an $r$-marked $k_{t}$-Dyck path and both a shorter $k_{t}$-Dyck
    path and an elevated $k_{0}$-Dyck path. This proves the claim.
  \end{proof}

  Now, with the statements of Claims~\ref{clm:recurrence:shift-peaks}
  and~\ref{clm:recurrence:middle-segment} we see that the
  right-hand side of~\eqref{eq:downstep-recurrence} enumerates all
  $r$-marked $k_{t}$-Dyck paths --- as does the corresponding left-hand
  side. This proves the recurrence and completes the proof of
  Theorem~\ref{thm:downstep-recurrence}.
\end{proof}

\begin{remark}\label{rem:t>k-explanation}
  As mentioned in Remark~\ref{rem:t>k}, Theorem~\ref{thm:downstep-recurrence} is not valid
  for $t > k$. This is because for $t > k$ the combinatorial structure
  of these paths changes: it is no longer guaranteed that the path is
  on or above the $x$-axis after an up-step. For the same reason, the
  formula from Proposition~\ref{prop:k_t:enum} is invalid for $t > k$.
  In this case, $k_{t}$-Dyck paths are not enumerated by generalized
  Fuss--Catalan numbers, but rather by weighted sums thereof
  (see~\cite{Prodinger:2019:negative-boundary} for details).
\end{remark}

There is more about the case of $t = k$ that is worth mentioning from a combinatorial point of view.
\begin{remark}\label{rem:t=k,r=n}
  Observe that for $t = k$ and $r = n$, the recurrence~\eqref{eq:downstep-recurrence}
  degenerates to $s_{n, k, n} = s_{n, k, n-1}$. Combinatorially, this happens
  because the summand treated
  by Claim~\ref{clm:recurrence:middle-segment} corresponds to the number of down-steps
  before the last return to the axis that are at the same time located below the
  axis --- which is not possible.
  This formula also has a bijective interpretation: observe that if one exchanges
  the down-steps before and after the last up-step in a $k_k$-Dyck
  paths, another valid $k_k$-Dyck path is obtained.
\end{remark}
The cases $t = 0$ and $t = k$ are inherently linked, as the next
proposition shows.

\begin{proposition}\label{prop:snkr}
  Let $n \geq 1$. Then the relation $s_{n+1, 0, r+1} = s_{n, k, r}$
  holds for all integers $r$ with $0\leq r < n$, and for $r = n$ we have
  $s_{n+1, 0, n+1} = s_{n, k, n} + k C_{n,k}$.
\end{proposition}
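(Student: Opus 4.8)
The plan is to prove both statements simultaneously with a single bijection $\Phi$ between $k_k$-Dyck paths with $n$ up-steps and $k_0$-Dyck paths with $n+1$ up-steps, and then to track how the down-step statistics transform under it. Given a $k_k$-Dyck path $P$ with $n$ up-steps (so it stays weakly above $y=-k$), I would set $\Phi(P) = U\,(P+k)\,D^k$, where $U$ is a single up-step, $P+k$ is $P$ shifted vertically up by $k$ units, and $D^k$ denotes $k$ consecutive down-steps. Since $P\ge -k$, the shifted path $P+k$ stays weakly above $0$; the prepended up-step takes the path from $0$ to $k$, and the appended $k$ down-steps bring the terminal height $k$ (the endpoint of $P+k$) back to $0$. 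Hence $\Phi(P)$ is a $k_0$-Dyck path with $n+1$ up-steps.

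Next I would verify that $\Phi$ is a bijection by describing its inverse. Every nonempty $k_0$-Dyck path begins with an up-step, and it ends with a maximal descent of length at least $k$: the pure descent following the last up-step has length equal to the height attained after that up-step, which is at least $k$. Stripping the initial up-step and the final $k$ down-steps leaves a sub-path from height $k$ to height $k$ that lies weakly above $0$; shifting it down by $k$ recovers a $k_k$-Dyck path with $n$ up-steps. This establishes the bijection (and, as a by-product, re-derives the identity $C_{n,k} = C_{n+1,0}$ that makes the counts match).

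I would then track how up-steps and down-steps are relabelled under $\Phi$. Because of the single prepended up-step, the $i$-th up-step of $P$ becomes the $(i+1)$-th up-step of $\Phi(P)$; consequently the down-steps of $P$ lying between its $r$-th and $(r+1)$-th up-steps are carried to the down-steps of $\Phi(P)$ lying between its $(r+1)$-th and $(r+2)$-th up-steps. For $0\le r<n$ the region between up-steps $r+1$ and $r+2$ of $\Phi(P)$ lies entirely inside the shifted copy of $P$, since the appended block $D^k$ sits after the $(n+1)$-th up-step; thus these down-steps are precisely the images of the down-steps in the $r$-th gap of $P$. Reading $s_{n,k,r}$ and $s_{n+1,0,r+1}$ as counts of marked paths, $\Phi$ sends an $r$-marked $k_k$-Dyck path bijectively to an $(r+1)$-marked $k_0$-Dyck path, which gives $s_{n,k,r} = s_{n+1,0,r+1}$ for $0\le r<n$.

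Finally I would treat $r=n$ separately, which is where the one genuine complication appears. The down-steps of $\Phi(P)$ after its last (the $(n+1)$-th) up-step are of two kinds: the images of the down-steps of $P$ after $P$'s $n$-th up-step, and the $k$ appended down-steps of the block $D^k$. Counting $(n+1)$-marked $k_0$-Dyck paths by splitting on the kind of the marked step, marks of the first kind correspond via $\Phi^{-1}$ to $n$-marked $k_k$-Dyck paths, contributing $s_{n,k,n}$, while marks of the second kind amount to choosing one of the $k$ appended down-steps together with an underlying $k_k$-Dyck path, of which there are $C_{n,k}$, contributing $k\,C_{n,k}$. Summing yields $s_{n+1,0,n+1} = s_{n,k,n} + k\,C_{n,k}$. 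The main point requiring care is exactly this bookkeeping at $r=n$: the appended block $D^k$ never interferes for $r<n$, since it lies beyond the last up-step, but it contributes precisely the extra $k\,C_{n,k}$ term when $r=n$.
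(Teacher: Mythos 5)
Your proposal is correct and uses exactly the bijection in the paper's own proof: prepend an up-step and append $k$ down-steps to a $k_k$-Dyck path to obtain a $k_0$-Dyck path, note that the $r$-th up-step becomes the $(r+1)$-th, and account for the $k$ appended down-steps as the extra $k\,C_{n,k}$ term when $r=n$. Your write-up merely spells out the details (the inverse map, the validity of the shifted path, the relabelling of gaps) that the paper leaves implicit.
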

\begin{proof}
  Let $0\leq r\leq n$. Notice that if we take a $k_{k}$-Dyck path of
  length $(k+1)n$, add a new up-step to the beginning and $k$
  down-steps to the end, the result is a $k_{0}$-Dyck path of length
  $(k+1)(n+1)$.
  Removing the initial up-step and the final $k$ down-steps from a
  $k_{0}$-Dyck path reverses this construction and yields a
  $k_{k}$-Dyck path, thus forming a bijection.

  This bijection transforms the $r$-th up-step in a $k_{k}$-Dyck path
  to the $(r+1)$-th up-step in the resulting $k_{0}$-Dyck
  path, which directly proves $s_{n+1,0,r+1} = s_{n, k , r}$ for
  $0\leq r < n$. For $r = n$, the additional summand $k C_{n,k}$ reflects the
  fact that $k$ additional down-steps are appended to the end of every
  $k_{k}$-Dyck path.
\end{proof}

With the recurrence from Theorem~\ref{thm:downstep-recurrence} we can
find an explicit summation formula for $s_{n,t,r}$.
\begin{corollary}\label{cor:downsteps-explicit}
  For positive integers $k$ and $n$, and integers $r$ and $t$ with
  $0\leq r< n$ and $0\leq t \leq k$, the
  number of down-steps between the $r$-th and $(r+1)$-th up-steps
  in $k_{t}$-Dyck paths of length $(k+1)n$ can be expressed explicitly as
  \begin{equation}\label{eq:downsteps-explicit}
    s_{n,t,r}
    = \sum_{j=0}^{t-1} C_{n, j} + \sum_{j=1}^{r} C_{j,t} s_{n-j+1,0,1}.
\end{equation}
\end{corollary}
The above expression can be found in terms of binomial coefficients in Table~\ref{tab:summary}. 
These sequences for small cases of $k$ and $t$ have been added to the
OEIS, see entries \OEIS{A007226}, \OEIS{A007228}, \OEIS{A124724},
\OEIS{A334640}--\OEIS{A334651}.

Observe that for easier reading, we restricted ourselves to
the case of $r < n$. A similar explicit formula can also
be given for $r = n$. However, we can find a better formula
for this boundary case via a different combinatorial argument.
\begin{proposition}\label{prop:last-downsteps}
  The number of down-steps following the last up-step in $k_{t}$-Dyck paths
  satisfies
  \begin{align}\label{eq:last-downsteps}
    s_{n, t, n}
    &= C_{n+1,t} - (t+1) C_{n,t}
  \end{align}
  for $n\geq 1$, with initial value $s_{0,t,0} = 1$.
\end{proposition}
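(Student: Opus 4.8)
The plan is to read $s_{n,t,n}$ off the $n$-marked paths introduced before the theorem, namely $k_t$-Dyck paths with $n$ up-steps together with a distinguished down-step occurring after the last up-step. The crucial structural observation is that no up-step follows the last up-step, so the tail of the path is a pure descent down to $y=0$; consequently the number of down-steps after the last up-step equals the height $h\ge 0$ reached by that up-step (note $h\ge k-t\ge 0$ since the step starts at height $\ge -t$). Thus an $n$-marked path is exactly a path together with a choice of one of its final $h$ down-steps, which is what $s_{n,t,n}$ counts.

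First I would set up the main bijection between $n$-marked $k_t$-Dyck paths with $n$ up-steps and those $k_t$-Dyck paths with $n+1$ up-steps whose last up-step \emph{starts at height $\ge 1$}. Given a marked path, let the marked down-step go from height $m+1$ to $m$ (so $m\ge 0$); I replace it by an up-step $m+1\to m+1+k$ and let the remainder be a pure descent to $0$, producing a path $Q$ with $n+1$ up-steps whose last up-step starts at height $m+1\ge 1$. The inverse turns the last up-step of $Q$ back into a down-step and lets the tail descend to $0$; this recovers a valid marked path precisely when the last up-step of $Q$ starts at height $\ge 1$ (otherwise $m<0$, which is impossible for a down-step in the final descent). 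Hence
\begin{equation*}
  s_{n,t,n} = C_{n+1,t} - \#\{Q \text{ with } n+1 \text{ up-steps} : \text{last up-step starts at height} \le 0\},
\end{equation*}
using Proposition~\ref{prop:k_t:enum} for the total count $C_{n+1,t}$.

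It remains to show the subtracted quantity equals $(t+1)C_{n,t}$. A path $Q$ of the above type is completely determined by the prefix $R$ preceding its last up-step, since that up-step and the ensuing descent to $0$ are forced. So these $Q$ are in bijection with lattice paths $R$ having $n$ up-steps $(1,k)$ and down-steps $(1,-1)$, starting at the origin, staying weakly above $y=-t$, and ending at some height $-p$ with $0\le p\le t$. Writing $A_p$ for the set of such $R$ ending at height $-p$, I have $A_0=\mathcal S_t$ (the $k_t$-Dyck paths with $n$ up-steps), so $|A_0|=C_{n,t}$. The key lemma is that $|A_p|=|A_{p-1}|$ for $1\le p\le t$: appending one down-step maps $A_{p-1}\to A_p$ (the new endpoint $-p\ge -t$), and this is invertible because every path in $A_p$ with $p\ge 1$ must \emph{end} in a down-step --- otherwise the up-step preceding the endpoint would start at height $-p-k\le -1-k<-t$ (here $k\ge t\ge p\ge 1$), violating the floor. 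Removing that final down-step inverts the map, so $|A_p|=C_{n,t}$ for all $0\le p\le t$, and summing over the $t+1$ values of $p$ gives $(t+1)C_{n,t}$, completing the proof. The initial value $s_{0,t,0}=1$ is a boundary convention lying outside Definition~\ref{def:downsteps} (the formula itself evaluates to $0$ at $n=0$), so it is simply declared rather than derived.

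I expect the main obstacle to be pinning down the image of the first bijection correctly --- realizing that one does \emph{not} recover all of $C_{n+1,t}$, but exactly the paths whose last up-step starts at height $\ge 1$ --- and then identifying the deficiency with the sets $A_p$ and proving the floor-forces-a-final-down-step lemma cleanly across all admissible endpoints $-t\le -p\le 0$. The remaining verifications (validity of each constructed path under the $y\ge -t$ constraint, and that the inverse maps land back in the correct classes) are routine once the height bookkeeping above is made precise.
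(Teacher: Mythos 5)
Your proposal is correct and is essentially the paper's own proof: both rearrange the claim as $C_{n+1,t} = (t+1)C_{n,t} + s_{n,t,n}$ and double-count $k_t$-Dyck paths with $n+1$ up-steps, splitting them according to whether the last up-step starts at height $\geq 1$ (in bijection with $n$-marked paths via replacing the marked down-step by an up-step) or at height in $[-t,0]$ (in bijection with pairs of a $k_t$-Dyck path and a choice of $0\leq p\leq t$ appended down-steps). The only difference is presentational: you make explicit the verification the paper leaves implicit, namely that a prefix ending at height $-p<0$ must end in a down-step (so the appending map is bijective), which is a worthwhile but minor refinement of the same argument.
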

\begin{proof}
  Consider the rearranged form
  \begin{equation}\label{eq:last-downsteps:combinatorial}
    C_{n+1, t} = (t+1) C_{n,t} + s_{n, t, n}.
  \end{equation}

  The left-hand side of~\eqref{eq:last-downsteps:combinatorial}
  enumerates $k_t$-Dyck paths with $n+1$ up-steps. We claim that the right-hand
  side does the same, where the summands correspond to different ways of
  expanding a $k_t$-Dyck path with $n$ up-steps by one additional up-step.

  In particular, the quantity $s_{n,t,n}$ enumerates $n$-marked $k_t$-Dyck paths with
  $n$ up-steps. Such a path can be extended by replacing the marked down-step by
  an up-step, and adding $k+1$ down-steps after it.

  The longer paths that cannot be constructed in this way are those where
  the $(n+1)$-th up-step starts on height $h$ with $-t\leq h \leq 0$. However,
  we are able to construct them by starting with an arbitrary $k_t$-Dyck path
  with $n$ up-steps (enumerated by $C_{n,t}$), appending between $0$ and $t$
  down-steps to the end of the path (which gives $t+1$ possibilities) followed
  by the $(n+1)$-th up-step and sufficiently many down-steps to make it a
  $k_t$-Dyck path again.

  This proves~\eqref{eq:last-downsteps:combinatorial}, and
  thus~\eqref{eq:last-downsteps}.
\end{proof}

Finally, we combine Proposition~\ref{prop:last-downsteps} with
Theorem~\ref{thm:downstep-recurrence}~(\ref{itm:thm:downstep-recurrence:elevated},
\ref{itm:thm:downstep-recurrence:rec}) to obtain another formula for
$s_{n,t,n-r}$.

\begin{corollary}\label{cor:r-last-downsteps}
  For $1 \leq r \leq n$ we have
  \begin{align}\label{eq:r-last-downsteps}
    s_{n, t, n-r}
    &= C_{n+1,t} - (k+1) C_{n,t} - \sum_{j=2}^{r} \frac{1}{j-1}\binom{(j-1)(k+1)}{j} C_{n-j+1,t}.
  \end{align}
\end{corollary}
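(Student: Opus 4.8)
The plan is to unroll the recurrence in \eqref{eq:downstep-recurrence} starting from the boundary value $s_{n,t,n}$, which is already available in closed form from Proposition~\ref{prop:last-downsteps}, and to descend in the second index until reaching $s_{n,t,n-r}$. Rewriting \eqref{eq:downstep-recurrence} as $s_{n,t,i-1} = s_{n,t,i} - C_{i,t}\bigl(s_{n-i+1,0,1} - t\iverson{i=n}\bigr)$ and telescoping from $i=n$ down to $i=n-r+1$ yields
\begin{equation*}
  s_{n,t,n-r} = s_{n,t,n} - \sum_{i=n-r+1}^{n} C_{i,t}\bigl(s_{n-i+1,0,1} - t\iverson{i=n}\bigr).
\end{equation*}
All indices $i$ in this range satisfy $1\le i\le n$, so every use of the recurrence is legitimate, and the Iverson bracket only affects the top summand $i=n$; I would therefore peel that term off separately.

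For the $i=n$ term I would use the base value $s_{1,0,1}=k$, obtained from \eqref{eq:downstep-recurrence:elevated} at $n=1$, giving the contribution $C_{n,t}(k-t)$. Substituting the closed form $s_{n,t,n} = C_{n+1,t} - (t+1)C_{n,t}$ from Proposition~\ref{prop:last-downsteps} and combining the two $C_{n,t}$-terms, the $t$-dependence cancels cleanly: $-(t+1)C_{n,t} - (k-t)C_{n,t} = -(k+1)C_{n,t}$. This already produces the leading part $C_{n+1,t} - (k+1)C_{n,t}$ of the claimed formula and settles the base case $r=1$, for which the remaining sum is empty.

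It then remains to rewrite the residual sum $\sum_{i=n-r+1}^{n-1} C_{i,t}\,s_{n-i+1,0,1}$. Reindexing by $j=n-i+1$ turns it into $\sum_{j=2}^{r} C_{n-j+1,t}\,s_{j,0,1}$, which already matches both the summation range and the factor $C_{n-j+1,t}$ appearing in \eqref{eq:r-last-downsteps}. The last ingredient is the binomial identity $s_{j,0,1} = \frac{k}{j}\binom{(k+1)(j-1)}{j-1} = \frac{1}{j-1}\binom{(j-1)(k+1)}{j}$, which follows from $\binom{N}{j} = \binom{N}{j-1}\frac{N-j+1}{j}$ with $N=(k+1)(j-1)$, because then $N-j+1 = k(j-1)$. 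Inserting this rewriting into the reindexed sum completes the proof.

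The computations are all routine; the only place that demands genuine care is the bookkeeping at the boundary $i=n$, where the Iverson correction $-t\,C_{n,t}$ and the term $-(t+1)C_{n,t}$ coming from Proposition~\ref{prop:last-downsteps} must be combined correctly so that the $t$-dependence collapses into the clean coefficient $k+1$. I expect this to be the main (and fairly mild) obstacle.
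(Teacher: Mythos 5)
Your proposal is correct and is essentially the paper's own proof: the paper proves \eqref{eq:r-last-downsteps} by induction on $r$, which is exactly your telescoping of \eqref{eq:downstep-recurrence} written one step at a time, with the same base case computation $-(t+1)C_{n,t}-(k-t)C_{n,t}=-(k+1)C_{n,t}$ using $s_{1,0,1}=k$, and the same rewriting $s_{j,0,1}=\frac{k}{j}\binom{(k+1)(j-1)}{j-1}=\frac{1}{j-1}\binom{(j-1)(k+1)}{j}$ via the identity $\frac{1}{b+1}\binom{a}{b}=\frac{1}{a-b}\binom{a}{b+1}$.
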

\begin{proof}
For $r=1$, we can combine the expressions in~\eqref{eq:downstep-recurrence:elevated},
\eqref{eq:downstep-recurrence}, and~\eqref{eq:last-downsteps} to obtain
\begin{equation*}
s_{n,t,n-1} =
s_{n,t,n} - (s_{1,0,1}-t)C_{n,t} =
C_{n+1,t} - (t+1) C_{n,t} - (k-t) C_{n,t} =
C_{n+1,t} - (k+1) C_{n,t}.
\end{equation*}
Similarly, when $r>1$ we can use induction to obtain
\begin{align*}
s_{n,t,n-r}
& = s_{n,t,n-(r-1)} - s_{r,0,1} C_{n-(r-1),t}
= C_{n+1,t} - (k+1) C_{n,t} - \\
& \quad - \sum_{j=2}^{r-1} \frac{1}{j-1}\binom{(j-1)(k+1)}{j} C_{n-j+1,t}
- \frac{1}{r-1}\binom{(r-1)(k+1)}{r} C_{n-(r-1),t},
\end{align*}
where we use the identity $\frac{1}{b+1} \binom{a}{b} = \frac{1}{a-b} \binom{a}{b+1}$
for positive integers $a > b$.
This yields~\eqref{eq:r-last-downsteps}.
\end{proof}

Asymptotic investigations into the quantities discussed in this
section will be conducted in Section~\ref{sec:asy}.

\section{A connection to coding theory: Binary matrices and punctured codes}\label{sec:applications}
In this section we point out a surprising link between the down-step
parameter $s_{n, t, r}$, and punctured convolutional codes which are
an important concept in coding theory.

\subsection{Perforation patterns of punctured convolutional codes}\label{sec:punctured-codes}

Error correcting codes are a tool for adding redundancy to (binary) data before transmitting
it via some channel that may introduce transmission errors. There are
two large families of codes: block codes, which map a fixed number of input
bits to their respective encoding; and convolutional codes, which process an
input stream by producing an output for overlapping input blocks (these
``windows'' move one bit at a time). See~\cite{Johannesson-Zigangirov:2015:convolutional-coding}
for more basic details on error correcting codes.

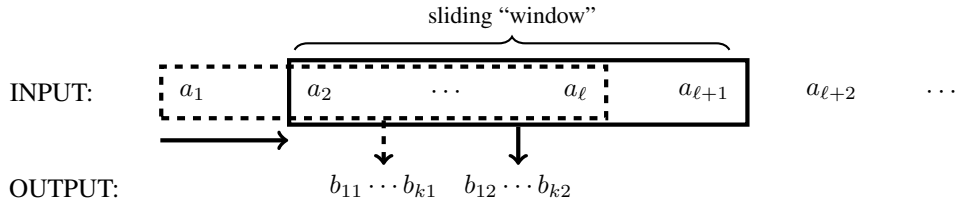
\begin{figure}[ht]
  \begin{tikzpicture}[scale = 0.85]
    \node at (-2.2, 0) {INPUT:};
    \node at (-2, -1.5) {OUTPUT:};

    \node (1) at (0, 0) {$a_1$};
    \node (2) at (2, 0) {$a_2$};
    \node (3) at (4, 0) {$\cdots$};
    \node (4) at (6, 0) {$a_{\ell}$};
    \node (5) at (8, 0) {$a_{\ell+1}$};
    \node (6) at (10, 0) {$a_{\ell+2}$};
    \node at (11.75, 0) {$\cdots$};
    \node (w1) at (2, 0.22) {};
    \node (w2) at (2, -0.22) {};

    \node[dashed, draw, fit = (1) (2) (3) (4), ultra thick] (A) {};
    \node[draw, fit = (w1) (w2) (2) (3) (4) (5), ultra thick] (B) {};

    \node[below of=A, yshift=-1.5ex] (out1) {$b_{11}\cdots b_{k1}$};
    \draw[dashed, ->, ultra thick] (A.south) to (out1.north);
    \node[below of=B, yshift=-1.5ex] (out2) {$b_{12}\cdots b_{k2}$};
    \draw[->, ultra thick] (B.south) to (out2.north);

    \draw[ ->, ultra thick] (-0.5, -0.75) to (1.5, -0.75);

    \draw [thick, decorate, decoration={brace, amplitude=5pt, raise=4pt}]
    (1.6cm, 0.5) to node[above, yshift=0.3cm] {\small{sliding ``window''}}(8.4cm, 0.5);
  \end{tikzpicture}
  \caption{Diagram of the input-output process of convolutional codes:
    input bits $a_{1}a_{2}\dots a_{\ell}$ produce output bits
    $b_{11}b_{21}\dots b_{k1}$.}
  \label{fig:sliding-window}
\end{figure}

Comparing these two basic code design concepts under the assumption that
both codes produce a similar number of redundancy bits per block,
convolutional codes will be much longer and contain more redundancy. This is because
the input blocks overlap, and thus each bit is used multiple times when creating
the output.

This is why for convolutional codes, the idea
of simply deleting some of the output bits (in a systematic way) is very
practical (see~\cite[Chapter~4.1]{Johannesson-Zigangirov:2015:convolutional-coding})
as many error correction and performance properties are
preserved, and yet the code is made more efficient.
Let $k$, $n$, $r\in\N$. We refer to convolutional codes that generate
$k$ output bits per input window as $(k, 1)$ convolutional codes. By collecting
$n$ successive input blocks, encoding them to obtain $k n$ output bits
and deciding for all $1\leq i\leq k$ and $1\leq j\leq n$ whether the $i$-th bit
resulting from the $j$-th input window should be kept or deleted, a
\emph{punctured $(r, n)$ convolutional code} is created. Here, $r$ refers to the number of output bits that
are selected to remain, and $n$ indicates the number of input windows.

The systematic way of choosing the bits that are kept and form a punctured
$(r, n)$ convolutional code can be
represented by a binary $k\times n$ matrix
\[
  P = (p_{ij})_{\substack{1\leq i\leq k \\ 1\leq j\leq n}} \quad
  \text{ where } \quad
  p_{ij} = \begin{cases}
  1 & \text{ if $i$-th output bit of $j$-th block is kept,}\\
  0 & \text{ otherwise, }
  \end{cases}
\]
that has exactly $r$ entries that are 1.
This matrix is called the \emph{perforation pattern} of the code.

As an example, consider the perforation pattern
\[
  \begin{pmatrix}
    1 & 0 & 0 & 1 \\
    0 & 1 & 0 & 0 \\
    1 & 0 & 1 & 1 \\
  \end{pmatrix}.
\]
When applied to a $(3, 1)$ convolutional code, a punctured $(6, 4)$ code is
created which assembles its output by deleting the second bit in the first
output block, the first and third bit from the second block, the first two bits
from the third block, and the second bit from the fourth block.
Considering output similar to that in Figure~\ref{fig:sliding-window}, the output
$b_{11}b_{21}b_{31}b_{12}b_{22}b_{32}b_{13}b_{23}b_{33}b_{14}b_{24}b_{34}$ would be `punctured'
to form the new output $b_{11}b_{31}b_{22}b_{33}b_{14}b_{34}$.

In~\cite{Begin-Haccoun:1989:punctured-codes} two perforation
patterns are considered to be equivalent if they can be obtained from
each other by a cyclic permutation of columns, inducing an equivalence
relation on the set of perforation patterns (and therefore also on the
corresponding binary matrices). This is because two different
punctured codes that are constructed from a given convolutional code by
applying equivalent perforation patterns have exactly the same distance
properties, and thus the same error correction capabilities.

The OEIS~\cite{OEIS:2022} contains several sequences concerned with
the enumeration of non-equivalent perforation patterns of punctured convolutional codes,
namely \OEIS{A007223}--\OEIS{A007229}. According to these OEIS
entries, the corresponding sequences were investigated by Bégin and
presented at a conference in 1992~\cite{Begin:1992:enumeration-punctured-codes},
but not published otherwise.

Note once again, the objects of interest --- non-equivalent punctured
convolutional codes --- can be described by equivalence classes of
binary matrices. With the coding theoretic background in mind, we take a closer
look at these matrices to find the connection between them and our down-step
parameter.

\subsection{Equivalence classes of binary matrices}\label{sec:matrices}

In this section we construct a bijection between elevated $k_0$-Dyck paths (Definition~\ref{def:elevated})
with $n$ up-steps and $\{0,1\}$-matrices of dimension $(k+1)\times n$  with precisely $n+1$ many 1's,
considered up to cyclic permutation of columns.
This will imply that $\frac{1}{n} \binom{(k+1)n}{n+1}$ enumerates\footnote{Observe that 
this is compatible with Corollary~\ref{cor:1-marked-elevated} as
$\frac{1}{n} \binom{(k+1)n}{n+1} = \frac{k}{n+1} \binom{(k+1)n}{n}$.} these paths.
Therefore we re-establish the formula~\eqref{eq:GPW} due to~\cite{Gu-Prodinger-Wagner:2010:k-plane-trees},
while contributing a simple combinatorial interpretation of the factor~$\frac{1}{n}$ in the first formula.
For the codes, it follows that for each fixed~$k$, the sequence
$(s^{(k)}_{n, 0, 1})_{n \geq 1}$ (Corollary~\ref{cor:1-marked-elevated}) enumerates perforation patterns for
punctured convolutional $(n+1,n)$ codes: see \OEIS{A007226} for $k=2$,
\OEIS{A007228} for $k=3$,
\OEIS{A124724} for $k=4$;
these sequences are also columns of the rectangular array
\OEIS{A241262}.
As a by-product,
we obtain \textbf{a new result in the spirit of the Cycle Lemma},
and \textbf{a new structure enumerated by Catalan numbers} (see details below).

We introduce the following notions.
\begin{itemize}
\item Let $\EL^k_{n}$ be the family of elevated $k_0$-Dyck paths with $n$ up-steps
and $kn$ down-steps.
\item Let $\MA^k_{n}$ be the family of $\{0,1\}$-matrices of order
  $(k+1)\times n$ with $n+1$ many $1$'s and $kn-1$ many $0$'s.
\item We say that two matrices in $\MA^k_{n}$ are
  \textit{CPC-equivalent} if they can be obtained from each other by a
  \textit{cyclic permutation of columns} (it is easy to see that this is indeed an
  equivalence relation).  For example, the matrices
  $\big(\begin{smallmatrix} 1 & 0 & 1 \\ 1 & 1 & 0 \end{smallmatrix}\big)$,
  $\big(\begin{smallmatrix} 0 & 1 & 1 \\ 1 & 0 & 1 \end{smallmatrix}\big)$,
  $\big(\begin{smallmatrix} 1 & 1 & 0 \\ 0 & 1 & 1 \end{smallmatrix}\big)$
  are CPC-equivalent in $ \MA^1_{3}$.
\item Let $\overline \MA^k_{n}$ be the quotient set of $\MA^k_{n}$
  with respect to the CPC equivalence relation.
\end{itemize}
\begin{theorem}\label{thm:matrices_paths}
  For each $k\geq 1$, $n \geq 1$ we have
  \begin{equation}\label{eq:MAPA}
    \bigl| \EL^k_{n}\bigr|=  \frac{1}{n} \binom{(k+1)n}{n+1}.
  \end{equation}
\end{theorem}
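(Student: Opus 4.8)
The plan is to verify the two equalities in \eqref{eq:MAPA} separately, since the path side is essentially already available from earlier results, while the matrix side is where the new ``Cycle Lemma in spirit'' content lies.

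For the path side, I would first recall that by Observation~\ref{obs:elevated} the elevated $k_0$-Dyck paths of a fixed elevation $j$ with $0\le j<k$ are counted by $C_{n,j}$, so that $\bigl|\EL^k_n\bigr| = \sum_{j=0}^{k-1} C_{n,j}$. By the summation identity~\eqref{eq:sum-of-k_t} this equals $\frac{k}{n+1}\binom{(k+1)n}{n}$, and the elementary rewriting
\begin{equation*}
  \frac{k}{n+1}\binom{(k+1)n}{n} = \frac{1}{n}\cdot\frac{kn}{n+1}\binom{(k+1)n}{n} = \frac{1}{n}\binom{(k+1)n}{n+1}
\end{equation*}
(using $\binom{(k+1)n}{n+1} = \frac{kn}{n+1}\binom{(k+1)n}{n}$) yields the right-hand value in \eqref{eq:MAPA}; this also re-establishes \eqref{eq:GPW}.

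For the matrix side, note first that $\bigl|\MA^k_n\bigr| = \binom{(k+1)n}{n+1}$, since a matrix is determined by the positions of its $n+1$ ones among the $(k+1)n$ cells. The cyclic group $\Z/n\Z$ acts on $\MA^k_n$ by CPC, and I claim this action is free. To see this, attach to a matrix $M$ its sequence of column weights $(w_1,\dots,w_n)$, where $w_\ell$ is the number of ones in the $\ell$-th column; thus $\sum_\ell w_\ell = n+1$ and $\sum_\ell (w_\ell-1)=1$. If $M$ were fixed by some nontrivial shift $\sigma^d$ with $0<d<n$, then it is also fixed by $\sigma^g$ for $g=\gcd(d,n)$, so its columns---and in particular its weight sequence---would be periodic with period $g\mid n$, $g<n$. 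The quantity $\sum_\ell(w_\ell-1)=1$ would then be $(n/g)$ times the integer sum over one period, forcing $(n/g)\mid 1$ and hence $g=n$, a contradiction. Therefore every CPC-orbit has exactly $n$ elements, and
\begin{equation*}
  \bigl|\overline\MA^k_n\bigr| = \frac{1}{n}\bigl|\MA^k_n\bigr| = \frac{1}{n}\binom{(k+1)n}{n+1}.
\end{equation*}
This freeness is precisely the statement advertised as being in the spirit of the Cycle Lemma: the coprimality $\gcd(n,n+1)=1$ forces aperiodicity of every weight sequence.

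Finally, to upgrade this equality of cardinalities into the explicit bijection promised in the section, I would select a canonical representative in each CPC-class by applying the Cycle Lemma to the weight sequence: since $(w_\ell-1)_\ell$ consists of integers $\ge -1$ summing to $+1$, exactly one of its $n$ rotations has all partial sums strictly positive, and by the aperiodicity just established this singles out a unique representative matrix. Reading this representative column by column and decoding it into an elevated $k_0$-Dyck path---using Corollary~\ref{cor:1-marked-elevated} to match the within-column bit patterns against the down-steps following each up-step---then yields the desired map into $\EL^k_n$, with the inverse encoding a path as its canonical word. I expect the main obstacle to be exactly this last decoding step: writing the step-by-step correspondence so that the nonnegativity of the path and the resulting elevation $0\le j<k$ are provably respected, and checking that the dominating-rotation normalization is compatible with it. The cardinality argument above already guarantees that such a bijection exists; the genuine work lies in exhibiting it cleanly.
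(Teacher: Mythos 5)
Your proof of the stated cardinality identity is correct, but it reaches it by a genuinely different route on the path side, and it delivers less than the paper's argument. The matrix half is identical to the paper's: $\bigl|\MA^k_{n}\bigr|=\binom{(k+1)n}{n+1}$, and freeness of the CPC action (your periodicity argument via $\sum_\ell(w_\ell-1)=1$ is the paper's ``$c\mid n$ and $c\mid n+1$'' block argument in different clothing) gives $\bigl|\overline\MA^k_{n}\bigr|=\frac{1}{n}\binom{(k+1)n}{n+1}$. For the path half, you simply quote Observation~\ref{obs:elevated} and the identity \eqref{eq:sum-of-k_t} --- both available before Section~\ref{sec:applications}, so there is no circularity --- and rewrite $\frac{k}{n+1}\binom{(k+1)n}{n}=\frac{1}{n}\binom{(k+1)n}{n+1}$, which is a correct binomial manipulation. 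The paper does not do this: it proves $\bigl|\EL^k_{n}\bigr|=\bigl|\overline\MA^k_{n}\bigr|$ by an explicit bijection (the maps $f$ and $g$, and the unique \emph{valid} representative in each CPC-class), precisely so that the count of elevated paths, i.e.\ formula \eqref{eq:GPW}, is \emph{re-derived} rather than assumed; that construction is also what yields the Cycle-Lemma-type remark, the interpretation of the factor $\frac{1}{n}$, and the new Catalan interpretation of Corollary~\ref{cor:new-catalan}. So your approach buys brevity and fully suffices for \eqref{eq:MAPA}, while the paper's buys the bijection and all of its byproducts. Incidentally, your deferred normalization coincides exactly with the paper's: the principal node after column $\alpha$ of $f(A)$ sits at height $k+(k+1)\sum_{\ell\le\alpha}(w_\ell-1)$, so your condition ``all partial sums of $(w_\ell-1)$ strictly positive'' (unique among rotations, since two good rotations would split the total sum $1$ into two positive integers) is precisely the paper's validity criterion on principal nodes. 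What you leave open --- decoding a canonical matrix into an elevated path and verifying nonnegativity \emph{within} columns --- is the actual substance of the paper's proof, though, as you note, it is not needed for the cardinality statement itself.
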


\begin{proof}
  The proof consists of two parts: First, we show that $\bigl|\overline \MA^k_{n}\bigr|=\frac{1}{n} \binom{(k+1)n}{n+1}$;
  and then we construct a bijection between $\overline \MA^k_{n}$ and $\EL^k_{n}$.

  To begin with, we clearly have $\bigl|\MA^k_{n}\bigr|= \binom{(k+1)n}{n+1}$. Since in a matrix
  $A\in\MA^k_{n}$ we have $n+1$ many $1$'s distributed over $n$
  columns, it is not possible that a non-trivial cyclic permutation of
  the columns of $A$ will yield $A$ itself.
  (Indeed, if $d$ is the smallest positive number such that
  cyclic shifting of $A$ by $d$ columns yields $A$, then $A$ splits into $c=n/d$
  identical blocks, and we have $c\mid n$; but considering the number of $1$'s
  in each block we also have $c\mid n+1$, which necessarily yields $c=1$ and $d=n$.)
  Therefore each
  equivalence class consists of precisely $n$ matrices, and we have
  $\bigl|\overline \MA^k_{n}\bigr|=\frac{1}{n} \binom{(k+1)n}{n+1}$.

  Next, we consider two mappings, $f$ and $g$, from $\MA^k_{n}$ to the
  set of lattice paths with up-steps $(1,k)$ and down-steps $(1,-1)$:
  a specialization of the latter one will yield the desired bijection 
  between $\overline \MA^k_{n}$ and $\EL^k_{n}$.
  For $A \in \MA^k_{n}$, we define $f(A)$ and $g(A)$ as follows:
  \begin{itemize}
  \item Read the entries of $A$ column by column (read each column from top to bottom, and the columns from
    left to right). Starting
    at $(0, \, k)$, draw an up-step for each~$1$ entry, a down-step
    for each~$0$ entry.  This yields a path from $(0, \, k)$ to
    $((k+1)n, \, 2k+1)$: we denote this path by~$f(A)$.
  \item In $f(A)$, replace the first up-step by a down-step. The part
    of the path to the left of this step does not change, and the part
    to the right of this step is accordingly translated $k+1$ units
    downwards. At this point we obtain a path from $(0, \, k)$ to
    $((k+1)n, \, k)$.
  \item Shift this path to the left so that the right endpoint of the
    modified down-step will lie on the $y$-axis.  Finally, remove the
    part of the path to the left of the $y$-axis, and attach it to the
    right of the remaining path. This yields a path from $(0, \, j)$
    to $((k+1)n, \, j)$ for some~$j \leq k-1$: we denote this path
    by~$g(A)$.
  \end{itemize}

  See Figure~\ref{fig:ma2pa1} for two examples.

  For
  $\alpha=1, 2, \ldots, n+1$, we call a node in $f(A)$ (or in $g(A)$)
  with $x$-coordinate $(\alpha-1)(k+1)$ \textit{the $\alpha$-th
    principal node}: these nodes are marked in the figure.  Notice that
  the mapping $f$ preserves the cyclic structure of the matrix, in the
  following sense.  If $A'$ is obtained from $A$ by the cyclic
  permutation of columns such that the $\alpha$-th column of $A$ is the
  first column of $A'$, then one obtains $f(A')$ from $f(A)$ by cutting
  $f(A)$ at the $\alpha$-th principal node, exchanging the left and the
  right parts and joining them, and positioning the path so that its
  starting point is again $(0, k)$. This is also illustrated in
  Figure~\ref{fig:ma2pa1}.

  \begin{figure}
    \begin{center}
      \includegraphics[scale=0.85]{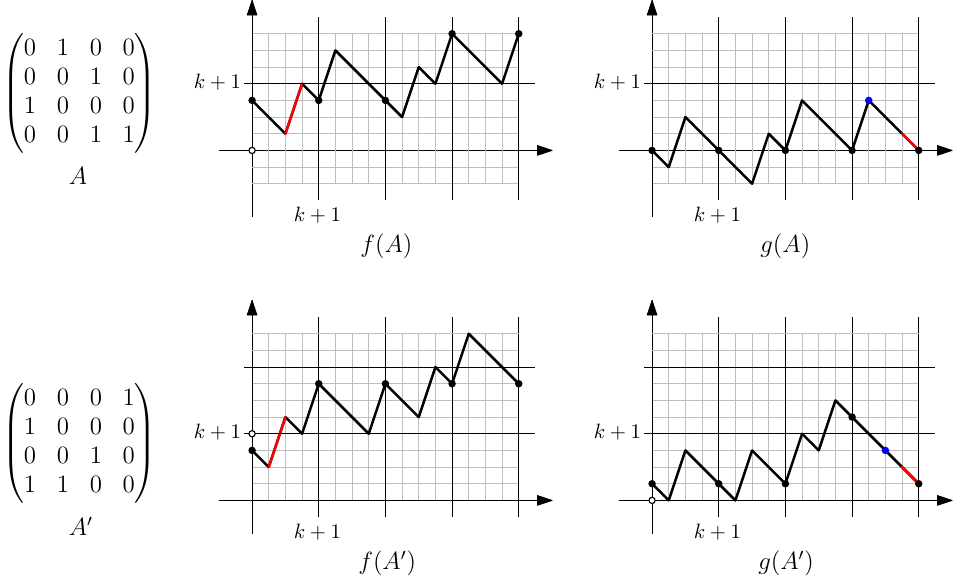}
      \caption{An illustration to the proof of Theorem~\ref{thm:matrices_paths}.
      The mappings $f$ and $g$ from matrices to paths are defined in the proof.
      The matrices $A$ and $A'$ are CPC-equivalent. $A'$ is valid,
      hence $g(A')$ is an elevated $k_0$-Dyck path.}
      \label{fig:ma2pa1}
    \end{center}
  \end{figure}

  It follows directly from the definitions of $f$ and $g$ that these mappings are
  injective.  By definition, $g(A)$ is a path from $(0, \, j)$ to
  $((k+1)n, \, j)$ for some~$j \leq k-1$.  However it is not always the
  case that $g(A)$ lies (weakly) above the $x$-axis (in fact, $j$
  can even be negative).  We say that $B \in \MA^k_{n}$ is \textit{valid} if
  $g(B)$ is an elevated $k_0$-Dyck path
  ($g(B) \in \EL^k_{n}$).  We will prove that each CPC-equivalence
  class in $\MA^k_{n}$ has a \textit{unique} valid
  representative. In other words: for each $A \in \MA^k_{n}$
  there is a unique valid $B$ obtained from $A$ by a (possibly trivial)
  cyclic permutation of columns.

  First we notice that $B \in \MA^k_{n}$ is valid if and only if $f(B)$
  is weakly above the line $y=k+1$ after its first up-step. Moreover,
  each principal node of $f(B)$ has $y$-coordinate of the form
  $\beta(k+1)-1$, and this implies one more equivalent condition:
  $B \in \MA^k_{n}$ is valid if and only if all the principal nodes of
  $f(B)$ (except for the starting point of the path) have
  $y$-coordinates of the form $\beta(k+1)-1$ with $\beta \geq 2$.

  Let $A \in \MA^k_{n}$. Let $\beta_0$ be the \textit{smallest} number
  such that some principal node (including the starting point) of $f(A)$ has $y$-coordinate
  $\beta_0(k+1)-1$.  Next, let
  \begin{equation*}
    \alpha_0 = \max\big\{\alpha : \big((\alpha-1)(k+1),\, \beta_0(k+1)-1\big) \text{ is a principal node}\big\},
  \end{equation*}
  and then consider $A'$ to be the matrix that is
  CPC-equivalent to~$A$ where the $\alpha_0$-th column of $A$ is the
  first column of~$A'$.  As a consequence, all the principal nodes in $f(A')$ except
  for the starting point have $y$-coordinates of the form $\beta(k+1)-1$
  with $\beta \geq 2$.  Therefore $A'$ is a valid matrix.

  It is also easy to show that in each CPC-equivalence class there is at
  most one valid matrix.  Indeed, assume that $A$ is valid. Then all the
  principal nodes of $f(A)$ (except for the starting point) have
  $y$-coordinates of the form $\beta(k+1)-1$ with $\beta \geq 2$.  If
  some non-trivial shift of $A$ is also valid, then, starting at some
  $\alpha < n+1$, all the principal nodes have $y$-coordinates of the
  form $\beta(k+1)-1$ with $\beta \geq 3$.  However, this contradicts
  the fact that $f(A)$ ends at $((k+1)n, \, 2k+1)$.

  Finally, we can reverse the process by taking $P\in\EL^k_{n}$,
  cutting it at the point with $y$-coordinate $k$ which lies at the
  last string of down-steps, modify/reconnect pieces accordingly,
  and read out the matrix.
  Such a cutting point (it is shown by blue colour in Figure~\ref{fig:ma2pa1})
  always exists, and it is not the end-point of the path:
  this follows from the fact
  that $P$ terminates at height $j$, where $0 \leq j \leq k-1$,
  and that the last string of down-steps in $P$ starts
  at height at least $k$. Thus we have a bijection
  between the set of valid matrices in $\MA^k_{n}$ and
  $\EL^k_{n}$. This shows that each CPC-equivalence class contains
  exactly one valid matrix, and completes the proof.
\end{proof}

\begin{remark}
  \begin{enumerate}[(a)]
  \item The claim that each CPC-equivalence class contains precisely
one valid matrix is a \textit{Cycle Lemma}-type result.
The classical Cycle Lemma~\cite{Dvoretzky-Motzkin:1947}
determines bounds on the number $N$ of those cyclic permutations of a given $\{0,1\}$-sequence
that satisfy the property: In each prefix, we have $\# 1 - \alpha \cdot \# 0>0$
(or $\# 1 - \alpha \cdot \# 0 \geq 0$) for a fixed number $\alpha$,
where $\# 0$ and $\# 1$ denote the number of zeros and ones in the
prefix, respectively.
It is possible to interpret such results in terms of lattice paths, 
where the inequality translates to some version of non-negativity of a path.
In our case the condition is that the sequence (extracted from $A$ column-by-column)
is such that in each prefix \textit{that contains at least one $1$}, we have
$k + (k\cdot\#1 - \#0) \geq k+1$ (see Figure~\ref{fig:ma2pa1}),
or, equivalently, $k\cdot\#1 - \#0 > 0$.
So, if we have a sequence of $n+1$ many $1$'s and $kn-1$ many $0$'s,
and partition it into $n$ consecutive blocks of $k+1$ entries, then
there is precisely one cyclic permutation \textit{of the blocks}
that satisfies the property above. 
To summarize, the difference between our result and the classical Cycle Lemma
is that in our case (1) the property holds \textit{with delay}
(meaning that the prefix has to contain at least one 1), and (2) we cyclically permute \textit{blocks}.
\item Recall that for the number of elevated $k_0$-Dyck paths we also have the formula
$\sum_{j=0}^{k-1}C_{n,j}$,
where $C_{n,j}$ enumerates elevated $k_0$-Dyck paths
of elevation $j$ ($0 \leq j \leq k-1$), or equivalently,
$k_j$-Dyck paths (see Corollary~\ref{cor:1-marked-elevated}).
In our bijection between elevated $k_0$-Dyck paths
and valid binary matrices, the elevation of a path corresponds to
$k$ minus the position of the first $1$ in the matrix
(converting it into a sequence as above). This yields a formula
for valid matrices with fixed position of the first $1$.
\item For $k = 1$, the CPC-equivalence classes are in bijection
with paths from $\EL_{n}^{1}$, which are just classical Dyck paths of length $2n$, 
and, thus, enumerated by Catalan numbers (see \OEIS{A000108} in the OEIS).
This means that we have found yet another combinatorial family enumerated 
by Catalan numbers. To the best of our knowledge, this interpretation has not
been mentioned before in literature.
\end{enumerate}
\end{remark}
\begin{corollary}[\textbf{New interpretation of Catalan numbers}]\label{cor:new-catalan}
  The family of equivalence classes of binary $2\times n$-matrices
  with $n+1$ many 1's, where two matrices are equivalent if they can
  be obtained from each other by a cyclic permutation of columns, is
  enumerated by the $n$-th Catalan number $C_{n}$.
\end{corollary}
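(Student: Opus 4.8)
The plan is to deduce this directly from Theorem~\ref{thm:matrices_paths} by specializing to $k=1$ and then translating the resulting enumeration formula into the closed form for the Catalan numbers. The bulk of the combinatorial work---establishing that each CPC-equivalence class contains exactly one valid representative, and hence that $\lvert\overline{\MA}^k_n\rvert = \frac{1}{n}\binom{(k+1)n}{n+1}$---has already been carried out in the theorem, so the corollary requires only an identification of objects together with a short algebraic simplification.

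First I would observe that setting $k=1$ in the definition of $\MA^k_n$ yields precisely the family of binary $2\times n$ matrices with $n+1$ entries equal to $1$ (and consequently $n-1$ entries equal to $0$). Under this specialization, the CPC-equivalence relation of Section~\ref{sec:matrices} coincides verbatim with the equivalence relation in the statement of the corollary: two matrices are identified exactly when one is obtained from the other by a cyclic permutation of columns. Therefore the family of equivalence classes described in the corollary is exactly $\overline{\MA}^1_n$.

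Next, Theorem~\ref{thm:matrices_paths} applied with $k=1$ gives $\lvert\overline{\MA}^1_n\rvert = \frac{1}{n}\binom{2n}{n+1}$. It then remains to check that this equals the $n$-th Catalan number, which is a one-line computation:
\begin{equation*}
  \frac{1}{n}\binom{2n}{n+1}
  = \frac{(2n)!}{n\,(n+1)!\,(n-1)!}
  = \frac{(2n)!}{(n+1)!\,n!}
  = \frac{1}{n+1}\binom{2n}{n}
  = C_n,
\end{equation*}
where in the second step we used $n\,(n-1)! = n!$.

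Since every step is either a direct instantiation of an already-proved theorem or an elementary binomial manipulation, there is no genuine obstacle here; the entire difficulty of the result lives inside Theorem~\ref{thm:matrices_paths}, whose bijective argument is precisely what explains the appearance of the factor $\frac{1}{n}$ (the common size of each equivalence class). The only point worth a moment's care is confirming that the object named in the corollary matches $\overline{\MA}^1_n$ on the nose, which it does by the identification in the second paragraph.
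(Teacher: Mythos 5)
Your proposal is correct, and it rests on the same foundation as the paper's own argument: everything is a specialization of Theorem~\ref{thm:matrices_paths} to $k=1$, after noting (as you do) that the objects in the corollary are exactly the elements of $\overline{\MA}^{1}_{n}$, since a $2\times n$ binary matrix with $n+1$ ones automatically has $n-1$ zeros. The only divergence is in the final step. The paper exploits the \emph{middle} term of the theorem's double equality $\bigl|\overline{\MA}^{k}_{n}\bigr| = \bigl|\EL^{k}_{n}\bigr| = \frac{1}{n}\binom{(k+1)n}{n+1}$: for $k=1$ the elevated paths in $\EL^{1}_{n}$ are ordinary Dyck paths of length $2n$, so the Catalan count is inherited from the classical enumeration of Dyck paths, and, more importantly, one obtains an explicit bijection between equivalence classes and Dyck paths, which the paper then describes in simplified form and illustrates in Figure~\ref{fig:dyck-cpc}. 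You instead use the \emph{right-hand} term and verify algebraically that $\frac{1}{n}\binom{2n}{n+1} = \frac{1}{n+1}\binom{2n}{n} = C_n$; the computation is correct. Your route is marginally more self-contained (it requires no prior knowledge that Dyck paths are Catalan-enumerated), but it establishes only equinumerosity, whereas the paper's route carries the explicit bijection that justifies calling the result a new combinatorial \emph{interpretation} of Catalan numbers rather than merely a coincidence of counting formulas.
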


The bijection between Dyck paths with $n$ up-steps and valid matrices
in $\MA_{n}^{1}$ can even be described in a simplified form. Given a
Dyck path of length $2n$, the associated valid $2\times n$ matrix is
constructed as follows: first, put a 1 in the first row and first
column. The remaining entries are filled column-by-column, from top to
bottom by reading the Dyck path from left to right and putting a 1 for
an up-step, and a 0 for a down-step. The last step of the path is
ignored. See Figure~\ref{fig:dyck-cpc} for an illustration.

\begin{figure}[ht]
  \centering\footnotesize
  \begin{minipage}[t]{0.18\linewidth}
    \centering
    \begin{tikzpicture}[scale=0.4]
      \drawlatticepath{1,1,1,-1,-1,-1}
    \end{tikzpicture}
    \[ \begin{pmatrix} 1 & 1 & 0 \\ 1 & 1 & 0 \end{pmatrix} \]
  \end{minipage}\hfill
  \begin{minipage}[t]{0.18\linewidth}
    \centering
    \begin{tikzpicture}[scale=0.4]
      \drawlatticepath{1,1,-1,1,-1,-1}
    \end{tikzpicture}
    \[ \begin{pmatrix} 1 & 1 & 1 \\ 1 & 0 & 0 \end{pmatrix} \]
  \end{minipage}\hfill
  \begin{minipage}[t]{0.18\linewidth}
    \centering
    \begin{tikzpicture}[scale=0.4]
      \drawlatticepath{1,1,-1,-1,1,-1}
    \end{tikzpicture}
    \[ \begin{pmatrix} 1 & 1 & 0 \\ 1 & 0 & 1 \end{pmatrix} \]
  \end{minipage}\hfill
  \begin{minipage}[t]{0.18\linewidth}
    \centering
    \begin{tikzpicture}[scale=0.4]
      \drawlatticepath{1,-1,1,1,-1,-1}
    \end{tikzpicture}
    \[ \begin{pmatrix} 1 & 0 & 1 \\ 1 & 1 & 0 \end{pmatrix} \]
  \end{minipage}\hfill
  \begin{minipage}[t]{0.18\linewidth}
    \centering
    \begin{tikzpicture}[scale=0.4]
      \drawlatticepath{1,-1,1,-1,1,-1}
    \end{tikzpicture}
    \[ \begin{pmatrix} 1 & 0 & 0 \\ 1 & 1 & 1 \end{pmatrix} \]
  \end{minipage}
  \caption{The $C_{3} = 5$ Dyck-paths with $3$ up-steps, each with their associated valid matrix from $\MA_{3}^{1}$. }
  \label{fig:dyck-cpc}
\end{figure}
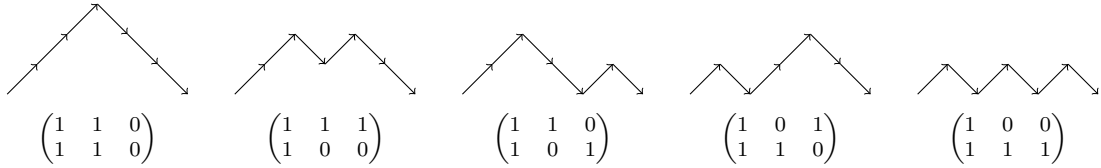

The above investigations were for the case where we have precisely
$n+1$ many $1$'s in the binary matrices. 
However, we can also handle the case where we have an
arbitrary number of 1's in the matrix by making use of the Pólya
theory framework (see~\cite{Polya:1937:kombin-anzah} for Pólya's
original work and~\cite[Remark I.60]{Flajolet-Sedgewick:ta:analy} for
a useful statement in terms of generating functions and combinatorial
classes). This has also been observed
in~\cite{Begin:1992:enumeration-punctured-codes}.

\begin{proposition}
  Let $k$, $n\in \mathbb{N}$ be fixed positive integers and let $z$ be a
  symbolic variable. Then with $\varphi$ as Euler's totient function, the generating polynomial
  \begin{equation}\label{eq:matrix-eq-genpoly}
    M_{k, n}(z) = \frac{1}{n} \sum_{d \mid n} \varphi(d) (1 + z^d)^{(k+1) n / d}
  \end{equation}
  enumerates all equivalence classes of $(k+1)\times n$ binary matrices under the
  CPC-equivalence where the power of $z$ corresponds to
  the number of 1's.
\end{proposition}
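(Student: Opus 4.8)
The plan is to apply the weighted form of Burnside's orbit-counting lemma (equivalently, the Pólya enumeration framework already invoked via \cite{Polya:1937:kombin-anzah} and \cite[Remark I.60]{Flajolet-Sedgewick:ta:analy}) to the action of the cyclic group $C_n$ on the set of $(k+1)\times n$ binary matrices by cyclic permutation of columns. I would regard such a matrix as an assignment of a column vector in $\{0,1\}^{k+1}$ to each of the $n$ column positions, and weight each matrix by $z^{(\#\,1\text{'s})}$. Since CPC-equivalence is exactly the orbit relation of $C_n$, the generating polynomial satisfies $M_{k,n}(z) = \frac1n \sum_{j=0}^{n-1} \mathrm{Fix}(z;\sigma^j)$, where $\sigma$ is the generator shifting columns by one and $\mathrm{Fix}(z;\sigma^j)$ denotes the $z$-weighted number of matrices left invariant by $\sigma^j$.

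Next I would compute the weighted fixed-point contribution of a single group element. The permutation $\sigma^j$ acts on the $n$ column positions with exactly $\gcd(j,n)$ cycles, each of length $\ell = n/\gcd(j,n)$. A matrix is fixed by $\sigma^j$ if and only if all columns in a common cycle coincide, so such a matrix is determined by one free column vector per cycle. A free column with $m$ entries equal to $1$ is repeated $\ell$ times, hence contributes $z^{\ell m}$ to the total weight; summing over the $\binom{k+1}{m}$ columns with $m$ ones gives the per-cycle generating function $\sum_{m=0}^{k+1}\binom{k+1}{m} z^{\ell m} = (1+z^{\ell})^{k+1}$. As the $\gcd(j,n)$ cycles are filled independently, I obtain $\mathrm{Fix}(z;\sigma^j) = (1+z^{\ell})^{(k+1)\gcd(j,n)}$.

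It then remains to collect terms according to the common cycle length. For a fixed divisor $d \mid n$, the exponents $j \in \{0,\dots,n-1\}$ whose cycles have length $\ell = d$ are precisely those with $\gcd(j,n)=n/d$; writing $j=(n/d)m$ with $0\le m<d$ and $\gcd(m,d)=1$ shows there are exactly $\varphi(d)$ of them, each contributing $(1+z^{d})^{(k+1)(n/d)}$. Grouping the sum over $j$ by $d$ therefore yields
\begin{equation*}
  M_{k,n}(z) = \frac1n \sum_{d \mid n} \varphi(d)\,(1+z^d)^{(k+1)n/d},
\end{equation*}
as claimed. As sanity checks, setting $d=1$ isolates the identity element and the total weighted count $(1+z)^{(k+1)n}$ of all matrices, while $z=1$ recovers the unweighted orbit count $\frac1n\sum_{d\mid n}\varphi(d)\,2^{(k+1)n/d}$.

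The argument is essentially routine once the group action is identified; the only point requiring genuine care is the bookkeeping in the final step — namely, correctly identifying the $d$ appearing in \eqref{eq:matrix-eq-genpoly} with the \emph{cycle length} $n/\gcd(j,n)$ rather than with $\gcd(j,n)$ itself, and confirming through the substitution $j=(n/d)m$ that exactly $\varphi(d)$ group elements share a given cycle length. The weight rescaling $z^m \mapsto z^{\ell m}$ for a column repeated $\ell$ times is what upgrades the naive per-column factor $(1+z)^{k+1}$ to $(1+z^{\ell})^{k+1}$, so getting this scaling and the divisor substitution right is the crux of recovering the powers $z^d$ in the stated polynomial.
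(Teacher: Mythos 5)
Your proposal is correct and takes essentially the same approach as the paper: both count orbits of the cyclic group acting on the $n$ columns, with each column enumerated by the building-block polynomial $(1+z)^{k+1}$. The only difference is one of packaging --- the paper cites the Pólya--Redfield theorem together with the known cycle indicator $\frac{1}{n}\sum_{d\mid n}\varphi(d)\,x_d^{n/d}$ of the cyclic group, whereas you re-derive exactly that machinery in this special case via the weighted Burnside lemma, computing the fixed-point weight $(1+z^{\ell})^{(k+1)n/\ell}$ of each column shift and grouping the shifts by cycle length.
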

\begin{proof}
  Let $x_1$, \ldots, $x_n$ be symbolic variables. It is a well-known
  fact~\cite[Remark~I.60]{Flajolet-Sedgewick:ta:analy} that the cycle index 
  of the group of cyclic permutations of length $n$ is given by
  \begin{equation}\label{eq:cycle-indicator-Cn}
    Z(x_1, x_2, \dots, x_n) = \frac{1}{n} \sum_{d \mid n} \varphi(d) x_d^{n/d}.
  \end{equation}
  The Pólya--Redfield theorem assumes the following setting: We consider a family of
  combinatorial objects (binary matrices of dimension $(k+1)\times n$) that are
  constructed from $n$ other ``building blocks'' (the columns of the binary
  matrices). We wish to enumerate the composed objects, given that two of them
  are considered to be the same if their building blocks are a cyclic shift of each
  other. Then, the theorem asserts that the generating function enumerating the composed
  objects is given by
  \[ Z(B(z), B(z^2), \ldots, B(z^n)), \]
  where $B(z)$ is the generating function enumerating the ``building block'' objects.

  Translating this to our setting we find that the $(k+1)\times 1$ binary columns are
  enumerated by $B(z) = (1 + z)^{k+1}$ (as for every entry we can either choose 1, which
  contributes $z$, or we can choose 0, which contributes $z^0 = 1$). Thus, applying the
  Pólya--Redfield theorem to this situation yields~\eqref{eq:matrix-eq-genpoly}.
\end{proof}

From this enumerating polynomial, formulas for the number of equivalence classes
with an arbitrary number of 1's can be derived by extracting the coefficient
of the corresponding monomial.

\section{Down-step statistics: A generating function approach}\label{sec:generating-functions}

In this section we will consider the symbolic decomposition of $k_t$-Dyck paths, making
use of the symbolic method (see \cite[Chapter I.1]{Flajolet-Sedgewick:ta:analy}).
From this, we then calculate down-step statistics using bivariate
generating functions with the variables $x$ and $u$ to count the number of up-steps
and the number of down-steps, respectively.

Recall that the definition of $\mathcal{S}_t$, the class of $k_t$-Dyck paths,
is given in Definition~\ref{def:k_t-Dyck}. 
Let $k\geq 2$ and $0 \leq t \leq k$. The symbolic decomposition of $k_t$-Dyck paths is given by
\begin{equation}\label{eq:symbolic-decomposition}
\mathcal{S}_t = \varepsilon + \sum_{i = 0}^t \big(\{\down\}^{i}\times \{\up\}
\times \mathcal{S}_{k-1-i}\times\{\down\}^{k-i}\times \mathcal{S}_{t}\big),
\end{equation}
where $\down$ represents a $(1, -1)$ step, $\up$ represents a $(1,
k)$ step, $\varepsilon$ represents the empty path, and by convention we 
set $\mathcal{S}_{-1} = \varepsilon$. This
is obtained via a first return decomposition (with the return being to
$y = 0$) of the path, and illustrated
in Figure~\ref{fig:kDyckdec}. Here we have $i$ down-steps, where $0 \leq i \leq t$,
followed by an up-step, bringing the height to $k-i$. In order to not introduce any returns
in this part of the path, we have an arbitrary $k_{k-i-1}$-Dyck path, again ending at a height of $k-i$. Finally, we introduce
the first return to $y=0$ of the path with $k-i$ down-steps, and the remainder of the path is an
arbitrary $k_t$-Dyck path.

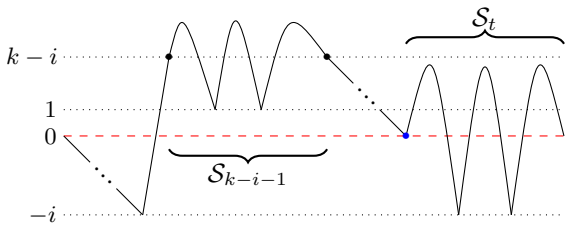
\begin{figure}[ht]
  \begin{tikzpicture}[scale = 0.35]
    \draw [-, dotted] (0, 1) to (19, 1);
    \draw [-, dotted] (0, 3) to (19, 3);
    \draw [-, dotted] (0, -3) to (19, -3);
    \draw [-, red, dashed] (0, 0) to (19, 0);

    \draw [-] (0, 0) to (1, -1);
    \node at (1.25, -1.25) {.};
    \node at (1.5, -1.5) {.};
    \node at (1.75, -1.75) {.};
    \draw [-] (2, -2) to (3, -3);

    \draw [-] (3, -3) to (4, 3);

    \draw (4, 3) .. controls (4.5, 5) .. (5.75, 1);
    \draw (5.75, 1) .. controls (6.5, 5.5) .. (7.5, 1);
    \draw (7.5, 1) .. controls (8.5, 5) .. (10, 3);

    \draw [-] (10, 3) to (11, 2);
    \node at (11.25, 1.75){.};
    \node at (11.5, 1.5){.};
    \node at (11.75, 1.25){.};
    \draw [-] (12, 1) to (13, 0);

    \draw (13, 0) .. controls (14, 4) .. (15, -3);
    \draw (15, -3) .. controls (16, 4.5) .. (17, -3);
    \draw (17, -3) .. controls (18, 4) .. (19, 0);

    \node at (4, 3) {\tiny{$\bullet$}};
    \node at (10, 3) {\tiny{$\bullet$}};
    \node[color=blue] at (13, 0) {\tiny{$\bullet$}};

    \node at (-0.5, 0) {\small{$0$}};
    \node at (-0.5, 1) {\small{$1$}};
    \node at (-1.2, 3) {\small{$k-i$}};
    \node at (-0.8, -3) {\small{$-i$}};

    \draw [thick, decorate, decoration={brace, amplitude=5pt, mirror, raise=4pt}]
    (4cm, -0.1) to node[below,yshift=-0.2cm] {$\mathcal{S}_{k-i-1}$}(10cm, -0.1);
    \draw [thick, decorate, decoration={brace, amplitude=5pt, raise=4pt}]
    (13cm, 3.1) to node[above,yshift= 0.2cm] {$\mathcal{S}_{t}$}(19cm, 3.1);
  \end{tikzpicture}
  \caption{The symbolic decomposition of a $k_t$-Dyck path for fixed $i$.}
  \label{fig:kDyckdec}
\end{figure}

\begin{remark}
  Note that this decomposition does not work for $t > k$, as for $k < i \leq t$ one up-step $(1, k)$
  would not be enough to return to or above the $x$-axis, and thus a decomposition based on 
  the first return to the $x$-axis would be much more involved, as has been outlined in \cite{Prodinger:2019:negative-boundary}.
\end{remark}

This translates into the following system of functional equations for $0 \leq t \leq k$, where
$S_t(x)$ is the generating function for $k_t$-Dyck paths with $x$ counting
the number of up-steps,
\begin{equation*}
  S_t(x) = 1 + xS_t(x)\sum_{i=0}^tS_{k-i-1}(x)\qquad \text{ and
  }\qquad S_{-1}(x) = 1.
\end{equation*}
With the substitution as used in \cite[Section 3.3]{Gessel:2016:lagrange}
to deal with the Fuss--Catalan family,
\begin{equation}\label{eq:substitution}
  x = z(1-z)^{k}
\end{equation}
we can show (see~\cite[Chapter~6]{Selkirk:2019:MSc}) that the generating function
for $k_t$-Dyck paths simplifies to the expression $S_t(x) = (1-z)^{-t-1}$. 
Throughout this section this substitution will be used to simplify expressions and thus extract
coefficients in an easier manner.

\subsection{The number of down-steps at the end of $k_t$-Dyck paths}

Before calculating down-step statistics generally, we first
consider a special case which will be used in all other calculations of the total number of down-steps: the total
number of down-steps at the end of all $k_t$-Dyck paths of a given length.
\begin{proposition}\label{thm:down-steps-end}
  The total number of down-steps at the end of all $k_t$-Dyck paths of length
  $(k+1)n$ is equal to
  \begin{equation*}
    s_{n,t,n} =
    \frac{t+1}{n+1}\binom{(k+1)(n+1)+t}{n} -
    \frac{(t+1)^2}{n}\binom{(k+1)n+t}{n-1},
  \end{equation*}
  where $s_{n,t,r}$ is used as in Definition~\ref{def:downsteps}.
\end{proposition}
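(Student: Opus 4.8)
The plan is to compute the ordinary generating function $\sum_{n\ge 1} s_{n,t,n}\,x^n$ in closed form and then read off $[x^n]$ using the substitution $x = z(1-z)^k$ with $S_t(x) = (1-z)^{-t-1}$, together with Lemma~\ref{lem:cauchy}. The starting observation is a ``last up-step'' decomposition: every nonempty $k_t$-Dyck path factors uniquely as $A\,\up\,\down^{a+k}$, where $A$ is a $k_t$-\emph{meander} (a prefix of a $k_t$-Dyck path, i.e.\ a path with steps $(1,k),(1,-1)$ from $(0,0)$ staying weakly above $y=-t$) ending at some height $a\ge -t$, the $\up$ is the final up-step, and the trailing run $\down^{a+k}$ is exactly the final descent. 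Hence the number of down-steps at the end is $a+k$, and summing over all meanders with $n-1$ up-steps gives $\sum_{n\ge1} s_{n,t,n}x^n = x\bigl(H_t(x) + k\,N_t(x)\bigr)$, where $N_t(x)=\sum_A x^{\#\mathrm{up}(A)}$ counts meanders and $H_t(x)=\sum_A a_A\,x^{\#\mathrm{up}(A)}$ records the total of their terminal heights.

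Both series come from the bivariate meander generating function $W_t(x,v)=\sum_A x^{\#\mathrm{up}(A)} v^{a_A}$ (so $N_t=W_t(x,1)$ and $H_t=\partial_v W_t(x,v)|_{v=1}$) via the kernel method. Appending a last step yields $W_t(x,v) = 1 + x v^k W_t(x,v) + v^{-1}\bigl(W_t(x,v) - v^{-t}L_t(x)\bigr)$, where $L_t(x)$ is the generating function of meanders ending at the floor $y=-t$ (the only ones to which a further down-step may not be appended; the factor $v^{-t}$ must be retained because those terms carry weight $v^{-t}$). Solving gives $W_t(x,v) = \dfrac{v - v^{-t}L_t(x)}{v - x v^{k+1} - 1}$, and requiring $W_t$ to be a power series forces the numerator to vanish at the small kernel root $v_0(x)$ of $v - xv^{k+1} - 1$; under the substitution one checks $v_0 = (1-z)^{-1}$, whence $L_t(x) = v_0^{\,t+1} = (1-z)^{-t-1} = S_t(x)$.

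With $L_t=S_t$ in hand, setting $v=1$ gives $N_t(x)=\tfrac{S_t(x)-1}{x}$ (matching $\sum_n C_{n+1,t}x^n$, consistent with the bijection above), while differentiating $W_t$ once in $v$ and setting $v=1$ yields a closed form for $H_t(x)$; substituting both into $x(H_t + kN_t)$ the terms collapse to $\sum_{n\ge1} s_{n,t,n}x^n = \frac{S_t(x)-1}{x} - (t+1)S_t(x)$. Extracting coefficients with Lemma~\ref{lem:cauchy} gives $s_{n,t,n} = C_{n+1,t} - (t+1)C_{n,t}$, which is exactly~\eqref{eq:last-downsteps}; rewriting the two Fuss--Catalan numbers via $\frac{1}{b+1}\binom{a}{b}=\frac{1}{a-b}\binom{a}{b+1}$ produces the two binomials in the claimed formula. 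The main obstacle is the kernel-method bookkeeping --- getting the floor correction $v^{-t}L_t$ right and identifying the auxiliary series $L_t$ with $S_t$ through the kernel root; once that is settled, the derivative computation and the final coefficient extraction are routine.
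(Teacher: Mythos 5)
Your proposal is correct, but it follows a genuinely different route from the paper's proof. The paper works from the first-return decomposition: it sets up the functional-equation system \eqref{eq:en-down-gf} with $u$ marking the trailing down-steps, differentiates and substitutes $x=z(1-z)^k$, then \emph{guesses} the closed form of $\partial_u S_t(x,u)\big|_{u=1}$, verifies it by substitution into the equation (with uniqueness argued via bootstrapping), and extracts coefficients with Lemma~\ref{lem:cauchy}. You instead cut at the \emph{last} up-step, reducing $s_{n,t,n}$ to the sum of $a_A+k$ over meanders $A$ with $n-1$ up-steps, and resolve the meander series exactly by the kernel method. I checked the key steps: the functional equation for $W_t(x,v)$ with the floor correction $v^{-t}L_t(x)$ is right; under $x=z(1-z)^k$ the small kernel root is indeed $v_0=(1-z)^{-1}$, so $L_t=v_0^{t+1}=S_t$; and the algebra for $x\bigl(H_t(x)+kN_t(x)\bigr)$ does collapse to $\frac{S_t(x)-1}{x}-(t+1)S_t(x)$, whose $n$-th coefficient $C_{n+1,t}-(t+1)C_{n,t}$ matches the stated binomials via the identity you cite. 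Notably, your argument lands first on the factored form of Proposition~\ref{prop:last-downsteps} --- which the paper proves separately and bijectively --- so it makes the equivalence of the two propositions transparent; the constructive nature of the kernel method (no ansatz, no bootstrapping) and the byproduct $L_t=S_t$ (meanders ending on the floor $y=-t$ are equinumerous with $k_t$-Dyck paths) are genuine advantages of your route. What the paper's heavier setup buys in exchange is reuse: the same equation \eqref{eq:en-down-gf}, differentiated twice, yields Proposition~\ref{prop:r=n-2d} and hence the variance, and the first-return decomposition underlies all the $S_{t,r}$ computations; to match that you would need second $v$-derivatives of $W_t$, which your framework does permit but you have not set up. Two minor points to tighten: justify the substitution $v=v_0(x)$ by noting that each $x$-coefficient of $W_t$ is a Laurent polynomial in $v$ (terminal heights lie between $-t$ and $k$ times the number of up-steps), so the evaluation is a legitimate formal operation; and the final extraction needs only $[x^n]S_t(x)=C_{n,t}$ from Proposition~\ref{prop:k_t:enum}, not the full strength of Lemma~\ref{lem:cauchy}.
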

Note that these numbers are, as expected, the same as those of
\eqref{eq:last-downsteps} in Proposition~\ref{prop:last-downsteps},
however, using this generating function approach allows
us to later obtain the variance --- a statistic which is more difficult
to obtain via the bijective approach. Interestingly, the only pre-existing
combinatorial interpretation of these numbers, related to
non-crossing trees, occurred for $k=2$ and $t=1$ as entry
\OEIS{A030983} in the OEIS. We have added entries for additional parameter
combinations as \OEIS{A334609}--\OEIS{A334612}, \OEIS{A334680},
\OEIS{A334682}, \OEIS{A334719}.

Before proving this, we will prove a lemma that will be used to extract coefficients from generating functions under the substitution
$x = z(1-z)^k$. This will be done to prevent repetition of very similar calculations.
\begin{lemma}\label{lem:cauchy}
  For integers $n$, $k$, $t$, $a$, $b$, $c$, and $d$, we have
  \begin{equation*}
    [x^n]\frac{1}{z^a(1-z)^{bk+ct+d}} = \frac{k(b-a)+ct+d}{n+a}\binom{(k+1)n+kb+ct+d+a-1}{n+a-1}.
  \end{equation*}
\end{lemma}
\begin{proof}
  We will extract coefficients by means of Cauchy's
integral formula (see \cite[Theorem IV.4]{Flajolet-Sedgewick:ta:analy}).
Let $\gamma$ be a small contour that winds around the origin once,
and let $\tilde\gamma$ be the image of $\gamma$ under the substitution
$x := z(1-z)^{k}$ from~\eqref{eq:substitution}. Then
\begin{align*}
  [x^{n}]\frac{1}{z^a(1-z)^{bk+ct+d}}
  &= \frac{1}{2\pi i} \oint_{\gamma} \frac{1}{z^a(1 - z)^{bk+ct+d}}
    \frac{1}{x^{n+1}}\, dx\\
  & = \frac{1}{2\pi i}\oint_{\tilde\gamma} \frac{\frac{1}{z^a(1-z)^{bk+ct+d}}}{(z(1-z)^k)^{n+1}}(1-(k+1)z)(1-z)^{k-1}\, dz\\
                                      & = \frac{1}{2\pi i}\oint_{\tilde\gamma} \frac{1}{z^{n+a+1}(1-z)^{k(n+b)+ct+d+1}}\, dz\\
                                      & \qquad - \frac{k+1}{2\pi i}\oint_{\tilde\gamma} \frac{1}{z^{n+a}(1-z)^{k(n+b)+ct+d+1}}\, dz\\
                                      & = [z^{n+a}]\frac{1}{(1-z)^{k(n+b)+ct+d+1}} - (k+1)[z^{n+a-1}]\frac{1}{(1-z)^{k(n+b)+ct+d+1}}\\
                                      & = \frac{k(b-a)+ct+d}{n+a}\binom{(k+1)n+kb+ct+d+a-1}{n+a-1}.\qedhere
\end{align*}
\end{proof}

\begin{proof}[of Proposition~\ref{thm:down-steps-end}]
We use $u$ to count the number of down-steps after the last up-step, and $x$ to count
the number of up-steps in the paths. With Figure~\ref{fig:kDyckdec} as reference: if the final
$k_{t}$-Dyck path (denoted by $\mathcal{S}_{t}$ in Figure~\ref{fig:kDyckdec}) is empty, then the contribution to the
sequence of down-steps at the end of the path
comes from the $k-i$ down-steps as well as the final down-steps
from the $k_{k-i-1}$-Dyck path. Otherwise, just the final down-steps
of the $k_{t}$-Dyck path contribute towards the total number of down-steps.
For $0 \leq t \leq k$ this leads to the system $S_{-1}(x, u) = 1$ and
\begin{equation}\label{eq:en-down-gf}
  S_t(x, u) = 1 + x\sum_{i=0}^tS_{k-i-1}(x, u)u^{k-i} +
  x\sum_{i=0}^tS_{k-i-1}(x, 1)(S_t(x, u) - 1),
\end{equation}
where $S_{k-i-1}(x, 1) = (1-z)^{-k+i}$.
We use the notation $\partial_u$ to represent the partial derivative with respect to $u$.
Differentiating with respect to $u$, setting $u=1$, and substituting $x =
z(1-z)^k$ gives
\begin{align*}
  \partial_u S_t(x, u)\big|_{u=1}
  & = z(1-z)^k\sum_{i=0}^t \partial_u S_{k-i-1}(x, u)\big|_{u=1} - (k-t)(1-z)^{t+1} + k+1\\
  & \quad - \frac{1}{z} + \frac{(1-z)^{t+1}}{z} + \partial_u S_t(x, u)\big|_{u=1}(1-(1-z)^{t+1}).
\end{align*}
Collecting the $\partial_u S_t(x, u)\big|_{u=1}$ terms on the left-hand side, we obtain\footnote{For specific values these computations are carried out in the interactive
SageMath worksheet at \url{https://gitlab.aau.at/behackl/kt-dyck-downstep-code}.}
\begin{equation}\label{eq:en-down-diff}
  \partial_u S_t(x, u)\big|_{u=1}
  = \frac{z}{(1-z)^{t-k+1}}\sum_{i=0}^t \partial_u S_{k-i-1}(x,
    u)\big|_{u=1} - k+t + \frac{k+1}{(1-z)^{t+1}} - \frac{1 -
    (1-z)^{t+1}}{z(1-z)^{t+1}}.
\end{equation}
It can be proved by induction on $t$ and substitution into the above equation that the
generating function for the total number of down-steps at the end of a $k_t$-Dyck path is given by
\begin{equation*}
  \partial_u S_t(x, u)\big|_{u=1} = -\frac{t+1}{(1-z)^{t+1}} + \frac{1}{z(1-z)^{k+t+1}} - \frac{1}{z(1-z)^k}.
\end{equation*}
Uniqueness of this solution can be shown using bootstrapping. Since everything on the right-hand
side in \eqref{eq:en-down-diff} except for the sum of partial derivatives can be expanded in terms of $z$,
using the initial estimate $\partial_uS_t(x, u)\big|_{u=1} = O(z)$ and expanding leads to the new estimate
\begin{equation*}
  \partial_uS_t(x, u)\big|_{u=1} = (k+1)(t+1)z + \frac{(t+1)t}{2}z - (t+1)^2z + O(z^2),
\end{equation*}
and this can be continued indefinitely. Throughout this section there will be several expressions which can be verified to
be a solution to a given functional equation, and with similar bootstrapping the uniqueness can be shown. Finally, applying
Lemma~\ref{lem:cauchy} for each of the terms in the generating function, we obtain
\begin{equation*}
  [x^n]\frac{1}{(1-z)^{t+1}} = \frac{t+1}{n}\binom{(k+1)n+t}{n-1}; \qquad [x^n]\frac{1}{z(1-z)^{k+t+1}} = \frac{t+1}{n+1}\binom{(k+1)(n+1)+t}{n},
\end{equation*}
from which the result follows. Note that we do not need to compute coefficients of $1/z(1-z)^k$ nor
the positive integer powers of this term,
since these are equal to $1/x$ and powers thereof and thus do not contribute to the coefficients.
\end{proof}

\begin{proposition}\label{prop:r=n-2d}
  The coefficients of the second derivative of $S_t(x, u)$ with respect to $u$ at $u = 1$ are given by
  \begin{align*}
    [x^{n}]\partial_u^2S_t(x, u)\Big|_{u=1} & = \frac{2(t+1)}{n+2}\binom{(k+1)(n+2)+t}{n+1}  + \frac{(t+1)^2(t+2)}{n}\binom{(k+1)n+t}{n-1}\\
    & \quad - \frac{2(t+k+2)(t+1)}{n+1}\binom{(k+1)(n+1)+t}{n}.
  \end{align*}
\end{proposition}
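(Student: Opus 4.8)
The plan is to follow the blueprint of the proof of Proposition~\ref{thm:down-steps-end}, now pushing one further $u$-derivative through the functional equation~\eqref{eq:en-down-gf}. First I would differentiate~\eqref{eq:en-down-gf} twice with respect to $u$ and set $u=1$. The product rule applied to $S_{k-i-1}(x,u)\,u^{k-i}$ contributes $\partial_u^2 S_{k-i-1}|_{u=1} + 2(k-i)\,\partial_u S_{k-i-1}|_{u=1} + (k-i)(k-i-1)\,S_{k-i-1}(x,1)$ at $u=1$, while the third sum in~\eqref{eq:en-down-gf} simply reproduces $x\sum_{i=0}^t S_{k-i-1}(x,1)\,\partial_u^2 S_t(x,u)|_{u=1}$. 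After substituting $x=z(1-z)^k$ and using $S_j(x,1)=(1-z)^{-j-1}$, the coefficient of $\partial_u^2 S_t|_{u=1}$ collects via $x\sum_{i=0}^t(1-z)^{i-k}=1-(1-z)^{t+1}$, leaving the clean functional equation
\begin{equation*}
(1-z)^{t+1}\,\partial_u^2 S_t\big|_{u=1}
= x\sum_{i=0}^t \partial_u^2 S_{k-i-1}\big|_{u=1}
+ 2x\sum_{i=0}^t (k-i)\,\partial_u S_{k-i-1}\big|_{u=1}
+ x\sum_{i=0}^t (k-i)(k-i-1)(1-z)^{i-k},
\end{equation*}
where the first derivatives $\partial_u S_{k-i-1}|_{u=1}$ are already known in closed form from the proof of Proposition~\ref{thm:down-steps-end}.

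Next I would produce a closed-form candidate for $\partial_u^2 S_t|_{u=1}$ by running Lemma~\ref{lem:cauchy} backwards from the target: reverse-engineering the three summands of the claimed formula gives
\begin{equation*}
\partial_u^2 S_t(x,u)\big|_{u=1}
= \frac{2}{z^2(1-z)^{2k+t+1}}
- \frac{2(t+k+2)}{z(1-z)^{k+t+1}}
+ \frac{(t+1)(t+2)}{(1-z)^{t+1}}
\end{equation*}
up to terms that are polynomials in $1/x = 1/(z(1-z)^k)$ and hence do not affect the coefficients. To verify it, I would substitute this candidate — and the same expression with $t$ replaced by $k-i-1$ — into the functional equation. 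The resulting right-hand side consists of finite geometric-type sums in $(1-z)$, evaluated by the standard trick of differentiating a geometric series; these should collapse to $2/x^2 - 2(t+k+2)/x + (t+1)(t+2)$, which equals $(1-z)^{t+1}$ times the candidate, confirming the guess. Uniqueness then follows from the same bootstrapping argument used for the first derivative, since the estimate $\partial_u^2 S_t|_{u=1}=O(z)$ propagates through every term apart from the coupling sum.

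Finally I would apply Lemma~\ref{lem:cauchy} termwise: with $(a,b,c,d)=(2,2,1,1)$, $(1,1,1,1)$, and $(0,0,1,1)$ the three contributing terms yield $\frac{2(t+1)}{n+2}\binom{(k+1)(n+2)+t}{n+1}$, $-\frac{2(t+k+2)(t+1)}{n+1}\binom{(k+1)(n+1)+t}{n}$, and $\frac{(t+1)^2(t+2)}{n}\binom{(k+1)n+t}{n-1}$ respectively, which is exactly the asserted formula. I expect the main obstacle to be the bookkeeping in the verification step: the second-derivative equation couples different elevations $k-i-1$, and correctly combining the three-piece first-derivative contributions weighted by $2(k-i)$ together with the $(k-i)(k-i-1)$ weights is where sign and index errors are most likely; cross-checking against the accompanying SageMath worksheet for small $k$, $t$, $n$ would be prudent.
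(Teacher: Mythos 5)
Your plan follows the paper's proof architecture step for step: differentiate \eqref{eq:en-down-gf} twice, insert the known closed form of $\partial_u S_{k-i-1}(x,u)\big|_{u=1}$, guess a closed form for $\partial_u^2 S_t(x,u)\big|_{u=1}$, verify it by substitution into the functional equation, appeal to bootstrapping for uniqueness, and finish with Lemma~\ref{lem:cauchy}. Your functional equation is correct, and so is your final coefficient extraction (hence the stated formula is true). The genuine gap is in the verification step. Your candidate, call it $G_t$, differs from the paper's exact solution \eqref{eq:r=n-second-derivative} by $P:=\tfrac{2}{x^2}-\tfrac{2(k+1)}{x}$: in the paper the first term has numerator $2\bigl(1-(1-z)^{t+1}\bigr)$, not $2$, and there is an additional summand $\tfrac{2(k+1)}{z(1-z)^k}$. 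Discarding $P$ is harmless for coefficient extraction, but it is fatal inside the functional equation, because the equation does not respect equality modulo Laurent polynomials in $1/x$: the unknown is multiplied by $(1-z)^{t+1}$ on the left and by $x$ in the coupling sum, and both operations turn negative powers of $x$ into honest power-series terms. Concretely, substituting $G_{k-i-1}$ and $G_t$ throughout leaves the discrepancy
\begin{equation*}
(1-z)^{t+1}P-(t+1)xP
=\frac{2}{z^{2}(1-z)^{2k-t-1}}-\frac{2(k+1)}{z(1-z)^{k-t-1}}-\frac{2(t+1)}{z(1-z)^{k}}+2(k+1)(t+1),
\end{equation*}
which is not zero --- its constant term as a series in $x$ is $(t+1)(2k+t+4)$ --- so the right-hand side does \emph{not} collapse to $(1-z)^{t+1}G_t=\tfrac{2}{x^2}-\tfrac{2(t+k+2)}{x}+(t+1)(t+2)$ as you assert; the failure is visible already at order $x^0$.

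Two further points reinforce this. First, the bootstrapping argument proves uniqueness among power series in $x$, but your candidate is not a power series (it contains $2/x^2-2(k+1)/x$), so uniqueness cannot be invoked for it. Second, the boundary case $t=-1$ is a quick sanity check: since $S_{-1}(x,u)=1$, the true second derivative must vanish there; the paper's expression indeed gives $0$ when $t=-1$, whereas yours gives $P\neq 0$. The repair is exactly what the paper does: keep the two coefficient-invisible terms $-\tfrac{2}{z^2(1-z)^{2k}}+\tfrac{2(k+1)}{z(1-z)^k}$ in the candidate, verify the now-exact identity (the geometric-sum manipulations you describe then do close up), and only at the very end discard these terms when applying Lemma~\ref{lem:cauchy}, since powers of $1/x$ contribute nothing to $[x^n]$ for $n\geq 0$. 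With that correction your argument goes through and coincides with the paper's proof.
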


The proof of the above proposition can be found in Appendix~\ref{ap:App-A}. 

\subsection{The number of down-steps between pairs of up-steps}

Here we again use the parameter $u$ to count the number of down-steps
in between pairs of up-steps. Let $S_{t, r}(x, u)$ be the bivariate
generating function for the number of down-steps between the $r$-th
and the $(r+1)$-th up-steps, or in the case where an $(r+1)$-th up-step
does not exist, after the $r$-th up-step. Using this we can easily set up a system
of functional equations using~\eqref{eq:symbolic-decomposition} to
determine $S_{t, r}(x, u)$. The case where $r = 0$ is given
by $S_{-1,0}(x, u) = 1$ and
\begin{equation}\label{eq:r=0}
  S_{t,0}(x, u) = 1 + xS_t(x, 1)\sum_{i=0}^tu^{i}S_{k-i-1}(x, 1).
\end{equation}
\begin{theorem}\label{thm:r=0}
  Let $n\geq 1$. The total number of down-steps before the first up-step
  in $k_t$-Dyck paths of length $(k+1)n$ is given by
  \begin{equation*}
    s_{n,t,0} = \frac{k}{n+1}\binom{(k+1)n}{n} - \frac{k-t}{n+1}\binom{(k+1)n+t}{n}.
  \end{equation*}
\end{theorem}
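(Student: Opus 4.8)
The plan is to compute $s_{n,t,0} = [x^n]\,\partial_u S_{t,0}(x,u)\big|_{u=1}$ directly from the functional equation~\eqref{eq:r=0} and then extract coefficients via Lemma~\ref{lem:cauchy}. First I would differentiate~\eqref{eq:r=0} with respect to $u$ and set $u=1$. Since $S_t(x,1)$ and the factors $S_{k-i-1}(x,1)$ carry no dependence on $u$, only the monomials $u^i$ are affected, so that
\[
  \partial_u S_{t,0}(x,u)\big|_{u=1} = x\, S_t(x,1)\sum_{i=0}^t i\, S_{k-i-1}(x,1).
\]
Next I would apply the substitution $x = z(1-z)^k$ together with the closed form $S_t(x,1) = (1-z)^{-t-1}$ (which also gives $S_{k-i-1}(x,1) = (1-z)^{i-k}$). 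The prefactors then collapse and one is left with the weighted sum $z(1-z)^{-t-1}\sum_{i=0}^t i(1-z)^i$.

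The main computational step is to evaluate the finite sum $\sum_{i=0}^t i(1-z)^i$ in closed form. I would obtain this from the standard identity $\sum_{i=0}^t i q^i = q\,\frac{d}{dq}\frac{1-q^{t+1}}{1-q}$ specialized to $q = 1-z$; substituting back and using $1-q = z$, the whole expression telescopes to the remarkably simple shape
\[
  \partial_u S_{t,0}(x,u)\big|_{u=1} = \frac{1}{z(1-z)^t} - \frac{1}{z} - t.
\]
The constant term $-t$ does not contribute to $[x^n]$ for $n\geq 1$, so only the two rational terms remain to be treated.

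Finally I would extract coefficients termwise via Lemma~\ref{lem:cauchy}. For $\frac{1}{z(1-z)^t}$ the parameters are $a=1$, $b=d=0$, $c=1$, which yields $\frac{t-k}{n+1}\binom{(k+1)n+t}{n}$; for $\frac{1}{z}$ the parameters are $a=1$, $b=c=d=0$, which yields $\frac{-k}{n+1}\binom{(k+1)n}{n}$. Combining the two with the signs above produces exactly
\[
  s_{n,t,0} = \frac{k}{n+1}\binom{(k+1)n}{n} - \frac{k-t}{n+1}\binom{(k+1)n+t}{n},
\]
as claimed. The one subtlety worth flagging is that the term $-1/z$ genuinely contributes here: unlike the terms of the form $1/(z(1-z)^k)$ discarded in the proof of Proposition~\ref{thm:down-steps-end}, which equal pure powers of $1/x$, the function $1/z$ is \emph{not} a power of $1/x$, so its coefficient must actually be computed. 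I expect the evaluation of the finite sum and this bookkeeping of which rational terms survive to be the only places requiring care; the remaining steps are routine.
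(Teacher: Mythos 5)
Your proposal is correct and follows essentially the same route as the paper: both start from the functional equation~\eqref{eq:r=0}, use the substitution $x = z(1-z)^k$ with $S_t(x,1) = (1-z)^{-t-1}$, arrive at the identical expression $\partial_u S_{t,0}(x,u)\big|_{u=1} = \frac{1}{z(1-z)^t} - \frac{1}{z} - t$, and extract coefficients via Lemma~\ref{lem:cauchy}; the only (immaterial) difference is that you differentiate in $u$ before summing the series in $i$, whereas the paper sums the geometric series in $u$ first and then differentiates. Your closing remark about why $-1/z$ contributes while $1/(z(1-z)^k) = 1/x$ would not is also accurate.
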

Again, by definition these numbers are equal to those
from~\eqref{eq:downstep-recurrence:ell0}, and have been added to the
OEIS as entries \OEIS{A001764},
\OEIS{A002293}, \OEIS{A002294}, \OEIS{A334785}--\OEIS{A334787}.
Together, the bijective and
generating function approaches yield the following interesting
summation identity related to~\eqref{eq:sum-of-k_t},
\begin{equation}\label{eq:FC_telescope}
 \sum_{j=0}^{t-1} C_{n, j} = \frac{k}{n+1}\binom{(k+1)n}{n} -
  \frac{k-t}{n+1}\binom{(k+1)n+t}{n}.
\end{equation}
This identity can be interpreted combinatorially by counting
$k_t$-Dyck paths. On the left-hand side we have the total number of
$k_j$-Dyck paths of length $(k+1)n$ for $0 \leq j \leq t-1$. By
Observation~\ref{obs:elevated}, these are in bijection with elevated
$k_0$-Dyck paths of elevation $j$ where $0 \leq j \leq k-1$, which we
know from \eqref{eq:GPW} to be enumerated by
\begin{equation*}
  \frac{k}{n+1}\binom{(k+1)n}{n},
\end{equation*}
minus the elevated $k_0$-Dyck paths of elevation $j$ where $t \leq j \leq k-1$.
It can be
verified by symbolic summation that summing the number of elevated $k_0$-Dyck paths over $t \leq j \leq k-1$ gives
\begin{equation}\label{eq:FC-summation}
\sum_{j = t}^{k-1}\frac{j+1}{(k+1)n+j+1}\binom{(k+1)n+j+1}{n} = \frac{k-t}{n+1}\binom{(k+1)n+t}{n}.
\end{equation}
With this, we have that the summation on the left-hand side of \eqref{eq:FC_telescope} is equal to the right-hand side.

Alternatively, \eqref{eq:FC_telescope} can be proved by considering
the following identity. This is a consequence
of~\eqref{eq:FC-summation} or can be verified by
algebraic manipulation.
\begin{equation*}
C_{n, j} = \frac{k-j}{n+1}\binom{(k+1)n+j}{n} -
  \frac{k-(j+1)}{n+1}\binom{(k+1)n+(j+1)}{n}.
\end{equation*}
Summing over $0 \leq j \leq t-1$ yields the following telescoping sum
\begin{align*}
  \sum_{j = 0}^{t-1}C_{n, j} & = \sum_{j = 0}^{t-1}\frac{k-j}{n+1}\binom{(k+1)n+j}{n} - \sum_{j=0}^{t-1}\frac{k-(j+1)}{n+1}\binom{(k+1)n+(j+1)}{n}\\
  & = \frac{k}{n+1}\binom{(k+1)n}{n} - \frac{k-t}{n+1}\binom{(k+1)n+t}{n}.
\end{align*}

\begin{proof}[of Theorem~\ref{thm:r=0}]
The system in \eqref{eq:r=0} can be further simplified with the substitution $x = z(1-z)^k$ and the expression $S_t(x) = (1-z)^{-t-1}$:
\begin{equation}\label{eq:Kt0}
  S_{t,0}(x, u) = 1 + \frac{z(1-z)^k}{(1-z)^{k+t+1}}\sum_{i=0}^{t}u^i(1-z)^i =
  1 + \frac{z}{(1-z)^{t+1}}\frac{1-u^{t+1}(1-z)^{t+1}}{1-u+uz}.
\end{equation}
Differentiating this system and setting $u=1$ yields
\begin{align*}
  \partial_u S_{t,0}(x, u)\big|_{u=1}
  & = \frac{z}{(1-z)^{t+1}}\Big(\frac{-(t+1)(1-z)^{t+1}z + (1-z)(1-(1-z)^{t+1})}{z^2}\Big)\\
  & = -(t+1) + \frac{1}{z(1-z)^{t}} - \frac{1-z}{z} = \frac{1}{z(1-z)^t} - \frac{1}{z} - t.
\end{align*}
Applying Lemma \ref{lem:cauchy} to extract coefficients yields the
expression given in the theorem.
\end{proof}

\begin{proposition}\label{prop:r=0-2d}
  Let $n \geq 1$. The coefficients of the second derivative (with respect to $u$) of $S_{t, 0}(x, u)$ are
  \begin{align*}
    [x^{n}]\partial_u^2S_{t, 0}(x, u)\big|_{u=1} & = \frac{2(t-2k-1)}{n+2}\binom{(k+1)n+t}{n+1} + \frac{2(t-1)k}{n+1}\binom{(k+1)n}{n}\\
    & \quad + \frac{4k}{n+2}\binom{(k+1)n+1}{n+1}.
  \end{align*}
\end{proposition}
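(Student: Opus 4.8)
The plan is to follow the template of the proof of Theorem~\ref{thm:r=0}: since \eqref{eq:Kt0} already provides $S_{t,0}(x,u)$ in closed form, I differentiate that closed form twice with respect to $u$, set $u=1$, simplify under the substitution $x = z(1-z)^k$, and read off the coefficients with Lemma~\ref{lem:cauchy}. A pleasant feature of this case, in contrast to Proposition~\ref{thm:down-steps-end}, is that no functional equation has to be solved, and hence no bootstrapping/uniqueness argument is needed.

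Concretely, I write $S_{t,0}(x,u) = 1 + \frac{z}{(1-z)^{t+1}}\,\frac{N(u)}{D(u)}$ with $N(u) = 1 - u^{t+1}(1-z)^{t+1}$ and $D(u) = 1 - u(1-z)$, treating $z$ as constant in $u$. Writing the quotient as $N\cdot D^{-1}$ and using $(ND^{-1})'' = N''D^{-1} + 2N'(D^{-1})' + N(D^{-1})''$, I evaluate at $u=1$. There $D = z$, $D' = -(1-z)$ and $D'' = 0$, so $(D^{-1})'\big|_{u=1} = (1-z)/z^2$ and $(D^{-1})''\big|_{u=1} = 2(1-z)^2/z^3$; moreover $N\big|_{u=1} = 1-(1-z)^{t+1}$, $N'\big|_{u=1} = -(t+1)(1-z)^{t+1}$ and $N''\big|_{u=1} = -t(t+1)(1-z)^{t+1}$.

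Multiplying the resulting expression by the prefactor $\frac{z}{(1-z)^{t+1}}$, the factors $(1-z)^{t+1}$ cancel in two of the three contributions, and after expanding the remaining powers of $(1-z)$ I expect the compact form
\[
  \partial_u^2 S_{t,0}(x,u)\big|_{u=1}
  = \frac{2}{z^2(1-z)^{t-1}} - \frac{2}{z^2} + \frac{2(1-t)}{z} + t(1-t).
\]
To extract coefficients I would use that the formula in Lemma~\ref{lem:cauchy} depends only on the exponent $a$ of $z$ and the total exponent $m$ of $(1-z)$, namely $[x^n]\,z^{-a}(1-z)^{-m} = \frac{m-ka}{n+a}\binom{(k+1)n+m+a-1}{n+a-1}$ (which remains valid for $m=0$ by re-running the Cauchy-integral computation of the lemma). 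Applying this with $(a,m) = (2,t-1)$, $(2,0)$ and $(1,0)$ yields the three binomial summands $\frac{2(t-2k-1)}{n+2}\binom{(k+1)n+t}{n+1}$, $\frac{4k}{n+2}\binom{(k+1)n+1}{n+1}$ and $\frac{2(t-1)k}{n+1}\binom{(k+1)n}{n}$, respectively, while the additive constant $t(1-t)$ contributes nothing for $n\geq 1$; summing gives the claimed identity.

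The main obstacle is purely the algebraic bookkeeping in the simplification step: one has to track the cancellations against the prefactor and correctly re-expand terms such as $(1-z)^{1-t}$ and $(1-z)^2/z^2$ into the clean combination above, being careful that the seemingly spurious $1/z$ and $1/z^2$ pieces genuinely contribute (through the degenerate $m=0$ instances of Lemma~\ref{lem:cauchy}), whereas only the additive constant drops out. Given the explicit closed form, there is no analytic subtlety beyond this.
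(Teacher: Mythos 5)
Your proposal is correct and takes essentially the same route as the paper: differentiate the closed form \eqref{eq:Kt0} twice, set $u=1$, and extract coefficients via Lemma~\ref{lem:cauchy}; indeed your simplified expression $\frac{2}{z^2(1-z)^{t-1}} - \frac{2}{z^2} + \frac{2(1-t)}{z} + t(1-t)$ is exactly the paper's $\frac{2}{z^2(1-z)^{t-1}} - \frac{t(t-1)z^2 + 2(t-1)z + 2}{z^2}$ rearranged. Your extra care in justifying the degenerate $m=0$ instances of the lemma (for the $1/z$ and $1/z^2$ terms) is a point the paper glosses over, and your extraction of the three binomial summands checks out.
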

\begin{proof}
  Differentiating the expression in \eqref{eq:Kt0} twice with respect to $u$ and setting $u = 1$ gives
  \begin{align*}
    \partial_u^2S_{t, 0}(x, u)\big|_{u=1} & = \frac{2}{z^2(1-z)^{t-1}} - \frac{t(t-1)z^2 + 2(t-1)z + 2}{z^2}
  \end{align*}
  The coefficients of this function can then be extracted using Lemma \ref{lem:cauchy} to obtain the result.
\end{proof}

For $r=1$, with reference to Figure \ref{fig:kDyckdec} and the symbolic decomposition \eqref{eq:symbolic-decomposition}
we must consider two cases:
\begin{itemize}
\item the sequence of down-steps before the second up-step lies
  entirely within the path from $\mathcal{S}_{k-i-1}$, and
\item the path from $\mathcal{S}_{k-i-1}$ is the empty path and so the sequence of down-steps
  before the second up-step includes the $k-i$ down-steps after the path from
  $\mathcal{S}_{k-i-1}$ as well as the down-steps before the first
  up-step in the path from $\mathcal{S}_t$.
\end{itemize}
This gives the system of functional equations for $0 \leq t \leq k$, setting $S_{-1,1}(x, u) = 1$,
\begin{equation*}
  S_{t,1}(x, u) = 1 + x\sum_{i=0}^t(S_{k-i-1,0}(x, u)-1)S_t(x, 1) + x\sum_{i=0}^tu^{k-i}S_{t,0}(x, u).
\end{equation*}
\begin{theorem}\label{thm:r=1}
 The total number of down-steps between the first and second up-steps in
 $k_t$-Dyck paths of length $(k+1)n$ is equal to
 \begin{equation}\label{eq:r=1:nt1}
   s_{n,t,1} = \frac{k}{n+1}\binom{(k+1)n}{n} - \frac{k-t}{n+1}\binom{(k+1)n+t}{n} +
   \frac{k(t+1)}{n}\binom{(k+1)(n-1)}{n-1}.
 \end{equation}
\end{theorem}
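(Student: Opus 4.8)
The plan is to exploit the fact that the functional equation for $S_{t,1}(x,u)$ stated just above the theorem is \emph{explicit}: its right-hand side involves only $S_{k-i-1,0}(x,u)$, $S_t(x,1)$, and $S_{t,0}(x,u)$, all of which have already been determined in this section. Consequently, unlike Proposition~\ref{thm:down-steps-end}, no guess-and-verify or bootstrapping step is needed. First I would differentiate the equation with respect to $u$ and set $u=1$, which gives
\[
  \partial_u S_{t,1}(x,u)\big|_{u=1}
  = x\,S_t(x,1)\sum_{i=0}^t \partial_u S_{k-i-1,0}(x,u)\big|_{u=1}
  + x\sum_{i=0}^t\Big[(k-i)\,S_{t,0}(x,1) + \partial_u S_{t,0}(x,u)\big|_{u=1}\Big],
\]
where I use that putting $u=1$ collapses the bivariate series to the univariate enumeration, so $S_{t,0}(x,1)=S_t(x,1)$.

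Next I would substitute the closed forms available under $x=z(1-z)^k$: namely $S_t(x,1)=(1-z)^{-t-1}$, the expression $\partial_u S_{t,0}(x,u)|_{u=1}=\frac{1}{z(1-z)^t}-\frac1z-t$ computed in the proof of Theorem~\ref{thm:r=0}, and its analogue $\partial_u S_{k-i-1,0}(x,u)|_{u=1}=\frac{1}{z(1-z)^{k-i-1}}-\frac1z-(k-i-1)$ obtained by replacing $t$ with $k-i-1$. (The extreme index $i=t=k$, where $\mathcal{S}_{-1}=\varepsilon$, is harmless: the substitution then evaluates to $0$, matching $\partial_u S_{-1,0}=0$.) Each of the resulting sums over $i$ is a finite geometric progression in powers of $(1-z)$, e.g.\ $\sum_{i=0}^t(1-z)^{i}=\frac{1-(1-z)^{t+1}}{z}$, so they can be evaluated in closed form. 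I expect a large cancellation: all the terms carrying factors $(1-z)^{k-t-1}$ and $(1-z)^{k-t}$ should annihilate one another, leaving the compact expression
\[
  \partial_u S_{t,1}(x,u)\big|_{u=1}
  = \frac{1}{z(1-z)^t} - \frac1z - (t+1)(1-z)^k + (\text{terms polynomial in }x).
\]

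Finally, I would read off $s_{n,t,1}=[x^n]\,\partial_u S_{t,1}(x,u)|_{u=1}$ by applying Lemma~\ref{lem:cauchy} term by term. The first two summands reproduce $s_{n,t,0}$ exactly, that is, the first two binomials of~\eqref{eq:r=1:nt1} (in agreement with Theorem~\ref{thm:r=0}), while $-(t+1)(1-z)^k=-(t+1)x/z$ yields $[x^n]\bigl(-(t+1)x/z\bigr)=\frac{k(t+1)}{n}\binom{(k+1)(n-1)}{n-1}$, the third binomial. This is exactly what the recurrence~\eqref{eq:downstep-recurrence} predicts, since for $r=1$ it reads $s_{n,t,1}=s_{n,t,0}+C_{1,t}\,s_{n,0,1}=s_{n,t,0}+(t+1)s_{n,0,1}$, giving a reassuring independent check of the target formula.

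The step I expect to be the main obstacle is the middle one: several superposed sums $\sum_{i=0}^t$ with different exponents of $(1-z)$ in numerator and denominator must be combined, and confirming the cancellation of all $(1-z)^{k-t}$-type contributions (rather than merely hoping for it) is delicate bookkeeping. One further point requiring care is that, besides the three rational terms above, the simplified generating function retains terms that are \emph{polynomial} in $x$ (a constant together with a multiple of $x$); these vanish under $[x^n]$ once $n$ is past the boundary, so~\eqref{eq:r=1:nt1} is recovered cleanly there, but the smallest value $n=1$ should be verified separately against the polynomial part.
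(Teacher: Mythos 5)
Your proposal is correct and follows essentially the same route as the paper: the paper substitutes the closed bivariate form \eqref{eq:Kt0} into the same functional equation and then differentiates, while you differentiate first and then substitute the univariate derivatives $\partial_u S_{t,0}(x,u)\big|_{u=1}$ and $\partial_u S_{k-i-1,0}(x,u)\big|_{u=1}$ from Theorem~\ref{thm:r=0}; the cancellation you anticipate does occur, both computations yield $\partial_u S_{t,1}(x,u)\big|_{u=1} = \frac{1-(1-z)^{t+1}}{z(1-z)^{t}} - (t+1)(1-z)^k(tz+1)$, and Lemma~\ref{lem:cauchy} then gives \eqref{eq:r=1:nt1}. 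Your caveat about the polynomial-in-$x$ terms and the boundary case $n=1$ is also consistent with the paper, whose proof likewise extracts coefficients only for $n \geq 2$.
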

\begin{proof}
With the substitution $x = z(1-z)^k$ and using the generating
function for $S_{t,0}(x, u)$ in terms of $z$ and $u$ as given in~\eqref{eq:Kt0} we have that
\begin{align}\label{eq:S_{t, 1}}
  S_{t,1}(x, u)
  & = 1 + z(1-z)^k\frac{1}{(1-z)^{t+1}}\Big(\frac{1-(1-z)^{t+1}}{(1-z)^k(1-u+uz)} - \frac{zu^k}{1-u+uz}\sum_{i=0}^tu^{-i}\Big)\nonumber\\
  & \quad + z(1-z)^k\sum_{i=0}^tu^{k-i}\Big(1 + \frac{z}{(1-z)^{t+1}}\frac{1-u^{t+1}(1-z)^{t+1}}{1-u+uz}\Big)\nonumber\\
  & = 1 + \frac{z(1-(1-z)^{t+1})}{(1-z)^{t+1}(1-u+uz)} + \Big(z(1-z)^ku^k - \frac{z^2u^{k+t+1}(1-z)^k}{1-u+uz}\Big)\sum_{i=0}^tu^{-i}.
\end{align}
Differentiating this with respect to $u$ results in
\begin{align*}
  \partial_u S_{t,1}(x, u)
  & = \frac{z(1-(1-z)^{t+1})}{(1-z)^{t}(1-u+uz)^{2}} - \Big(z(1-z)^ku^k - \frac{z^2u^{k+t+1}(1-z)^k}{1-u+uz}\Big)\sum_{i=0}^tiu^{-i-1}\\
  & \quad + \Big(kz(1-z)^ku^{k-1} -
    \frac{z^2u^{k+t}(1-z)^k((k+t)(1-u+uz) + 1)}{(1-u+uz)^2}\Big)\sum_{i=0}^tu^{-i},
\end{align*}
and finally setting $u=1$ gives the expression
\begin{align*}
  \partial_u S_{t,1}(x, u)\big|_{u=1}
  & = \frac{1-(1-z)^{t+1}}{z(1-z)^{t}} - (t+1)(1-z)^k(tz + 1).
\end{align*}
Extracting the $n$-th coefficient (for $n \geq 2$) gives the
formula for the total number of down-steps between the first and second
up-steps in $k_t$-Dyck paths of length $(k+1)n$ as given in~\eqref{eq:r=1:nt1}.
\end{proof}

\begin{proposition}\label{prop:r=1-2d}
  Let $n\geq 2$. The coefficients of the second derivative with respect to $u$ of $S_{t, 1}(x, u)$ are
  \begin{align*}
    [x^{n}]\partial_u^2S_{t, 1}(x, u)\big|_{u=1} & = \frac{2(t-2k-1)}{n+2}\binom{(k+1)n+t}{n+1} + \frac{4(k+1)}{n+2}\binom{(k+1)n-1}{n+1}\\
    & \quad + \frac{k(t+1)(2k+t-2)}{n}\binom{(k+1)(n-1)}{n-1} + \frac{4(t+1)k}{n+1}\binom{(k+1)n-k}{n}.
  \end{align*}
\end{proposition}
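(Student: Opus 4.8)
The plan is to follow the same template as the proofs of Theorem~\ref{thm:r=1} and Propositions~\ref{prop:r=n-2d} and~\ref{prop:r=0-2d}: begin with the explicit expression for $S_{t,1}(x,u)$ recorded in~\eqref{eq:S_{t, 1}}, which is already written in terms of $z$ and $u$ under the substitution $x=z(1-z)^k$, differentiate it twice with respect to $u$, evaluate at $u=1$ to obtain a closed rational expression in $z$, and finally extract $[x^n]$ termwise by means of Lemma~\ref{lem:cauchy}.

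To organize the differentiation I would split~\eqref{eq:S_{t, 1}} into the single quotient $P/w$, where $w=1-u+uz$ and $P=\frac{z(1-(1-z)^{t+1})}{(1-z)^{t+1}}$ is independent of $u$, and the product term $\bigl(z(1-z)^k u^k-z^2u^{k+t+1}(1-z)^k/w\bigr)\,G(u)$ with $G(u)=\sum_{i=0}^t u^{-i}$. The first piece is routine: since $\partial_u w=-(1-z)$ and $w|_{u=1}=z$, repeated application of the chain rule gives $\partial_u^2(P/w)=2P(1-z)^2w^{-3}$, whence $\partial_u^2(P/w)\big|_{u=1}=\frac{2(1-(1-z)^{t+1})}{z^2(1-z)^{t-1}}$. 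After Lemma~\ref{lem:cauchy} this single expression already produces the first two binomial summands in the statement (the term $\frac{2}{z^2(1-z)^{t-1}}$ gives the first, and $-\frac{2(1-z)^2}{z^2}$ the second).

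The main obstacle is the product term, which couples a rational factor in $u$ with the finite sum $G(u)$. Here I would apply the Leibniz rule, evaluating the Faulhaber-type sums $G(1)=t+1$, $G'(1)=-\tfrac{t(t+1)}{2}$, and $G''(1)=\tfrac{t(t+1)(t+2)}{3}$, together with the first and second $u$-derivatives of $u^k/w$ and $u^{k+t+1}/w$ at $u=1$ (again using $w|_{u=1}=z$). Collecting and simplifying, this contribution should reduce to $-(t+1)(2k+t-2)(1-z)^k-\frac{2(t+1)(1-z)^k}{z}$ modulo terms that are pure powers of $1/x=1/(z(1-z)^k)$ and therefore do not affect $[x^n]$ for $n\geq 1$, exactly as remarked after Proposition~\ref{thm:down-steps-end}. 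The bookkeeping in this step is lengthy and the cancellations delicate, so I would confirm the resulting $z$-expression both by the bootstrapping scheme used to pin down~\eqref{eq:r=n-second-derivative} and against the accompanying SageMath worksheet.

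With the closed form in $z$ in hand, a final termwise application of Lemma~\ref{lem:cauchy}---evaluating $[x^n]\frac{1}{z^2(1-z)^{t-1}}$, $[x^n]\frac{(1-z)^2}{z^2}$, $[x^n](1-z)^k$, and $[x^n]\frac{(1-z)^k}{z}$, and discarding the $1/x$-powers---assembles the four stated binomial terms. As in Theorem~\ref{thm:r=1}, the hypothesis $n\geq 2$ is what guarantees that the low-order corrections hidden in the discarded Laurent terms play no role.
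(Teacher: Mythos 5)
Your proposal is correct and follows essentially the same route as the paper: differentiate the closed form~\eqref{eq:S_{t, 1}} twice with respect to $u$, set $u=1$, and extract coefficients termwise via Lemma~\ref{lem:cauchy}; your reduced expression agrees with the paper's closed form $\frac{2}{z^2(1-z)^{t-1}} - \frac{2(1-z)^2}{z^2} - \frac{(1-z)^k(t+1)(t(2k-1)z^2 + (2k+t-2)z + 2)}{z}$ up to the term $-(t+1)t(2k-1)z(1-z)^k$. One small correction: that discarded term equals $-(t+1)t(2k-1)\,x$, a \emph{positive} power of $x$ rather than a power of $1/x$, which is precisely why the hypothesis $n\geq 2$ (and not merely $n\geq 1$) is needed, as you correctly invoke at the end.
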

\begin{proof}
  Differentiating the expression in \eqref{eq:S_{t, 1}} twice with respect to $u$ and setting $u = 1$ yields
  \begin{align*}
    \partial_u^2S_{t, 1}(x, u)\big|_{u=1} & = \frac{2}{z^2(1-z)^{t-1}} - \frac{2(1-z)^2}{z^2} - \frac{(1-z)^k(t+1)(t(2k-1)z^2 + (2k+t-2)z + 2)}{z}.
  \end{align*}
  Extracting coefficients of each term via Lemma \ref{lem:cauchy} gives the result in the proposition.
\end{proof}

When $r \geq 2$ the system of functional equations as well as the calculations
become trickier --- here there are now three possible configurations with
reference to the symbolic decomposition in \eqref{eq:symbolic-decomposition}:
the sequence of down-steps between the $r$-th and $(r+1)$-th up-steps
\begin{enumerate}
\item lies entirely within the path from $\mathcal{S}_{k-i-1}$,
\item lies entirely within the path from $\mathcal{S}_{t}$, and
\item\label{item:config3} consists of the final down-steps of the path from $\mathcal{S}_{k-i-1}$,
  the $k-i$ intermediate down-steps, and the initial down-steps of
  the path from $\mathcal{S}_t$ (illustrated in Figure \ref{fig:third-bullet}).
\end{enumerate}
\begin{figure}[ht]
  \begin{tikzpicture}[scale = 0.35]
    \draw [-, dotted] (0, 1) to (20, 1);
    \draw [-, dotted] (0, 3) to (20, 3);
    \draw [-, dotted] (0, -3) to (20, -3);
    \draw [-, red, dashed] (0, 0) to (20, 0);

    \draw [-] (0, 0) to (1, -1);
    \node at (1.25, -1.25) {.};
    \node at (1.5, -1.5) {.};
    \node at (1.75, -1.75) {.};
    \draw [-] (2, -2) to (3, -3);

    \draw [-] (3, -3) to (4, 3);

    \draw (4, 3) .. controls (4.5, 6) .. (5.75, 1);
    \draw (5.75, 1) .. controls (6.5, 6.5) .. (7.5, 1);

    \draw [-] (7.5, 1) to (8, 5);
    \draw [-, thick] (8, 5) to (10, 3);

    \draw [-, thick] (10, 3) to (11, 2);
    \node at (11.25, 1.75){.};
    \node at (11.5, 1.5){.};
    \node at (11.75, 1.25){.};
    \draw [-, thick] (12, 1) to (13, 0);

    \node at (13, 0) {\tiny{$\bullet$}};
    \draw [-, thick] (13, 0) to (14, -1);

    \draw (14, -1) .. controls (15, 4) .. (16, -3);
    \draw (16, -3) .. controls (17, 4.5) .. (18, -3);
    \draw (18, -3) .. controls (19, 4) .. (20, 0);

    \node at (4, 3) {\tiny{$\bullet$}};
    \node at (10, 3) {\tiny{$\bullet$}};

    \node at (-0.5, 0) {\small{$0$}};
    \node at (-0.5, 1) {\small{$1$}};
    \node at (-1.2, 3) {\small{$k-i$}};
    \node at (-0.8, -3) {\small{$-i$}};

    \draw [thick, decorate, decoration={brace, amplitude=2pt, mirror, raise=4pt}]
    (3cm, -3) to node[below, yshift = -0.35cm] {\small{$x$}}(3.9cm, -3);
    \draw [thick, decorate, decoration={brace, amplitude=5pt, mirror, raise=4pt}]
    (4.1cm, -3) to node[below, yshift = -0.35cm] {\small{$S_{k-i-1}(x, u)$}}(9.9cm, -3);
    \draw [thick, decorate, decoration={brace, amplitude=5pt, mirror, raise=4pt}]
    (10.1cm, -3) to node[below, yshift = -0.3cm] {\small{$u^{k-i}$}}(12.9cm, -3);
    \draw [thick, decorate, decoration={brace, amplitude=5pt, mirror, raise=4pt}]
    (13.1cm, -3) to node[below, yshift = -0.3cm] {\small{$S_{t,0}(x, u)$}}(20cm, -3);
  \end{tikzpicture}
  \caption{An illustration of Configuration~\ref{item:config3}.}
  \label{fig:third-bullet}
\end{figure}
This leads to the general functional equation for $r \geq 2$,
\begin{align}
  S_{t,r}(x, u)
  & = 1 + xS_t(x, 1)\sum_{i=0}^t\sum_{n \geq r}x^n\big([x^n]S_{k-i-1,r-1}(x, u)\big)\label{eq:term-one}\\
  & \quad + x\sum_{i=0}^t\sum_{j = 0}^{r-2}x^{j}\big([x^{j}]S_{k-i-1}(x, 1)\big)S_{t,r-1-j}(x, u)\label{eq:term-two}\\
  & \quad + x\sum_{i=0}^tx^{r-1}\big([x^{r-1}]S_{k-i-1}(x, u)\big)u^{k-i}S_{t,0}(x, u).\label{eq:term-three}
\end{align}
Differentiating this with respect to $u$ and then setting $u = 1$, results in the following simplifications:
\begin{itemize}
  \item The sum in the line marked \eqref{eq:term-three} splits into three sums after differentiating and the
  differentiated term containing $\partial_u S_{t, 0}(x, u)$ combines with the differentiated sum from the line marked
  \eqref{eq:term-two} to make the sum over $0 \leq j \leq r-1$.
  \item Since $S_{k-i-1, r-1}(x, u)$ counts the number of down-steps between the $(r-1)$-th and $r$-th up-steps, and
  thus for $n < r-1$ there are no $u$ terms in the coefficients $[x^n]S_{k-i-1,r-1}(x, u)$. One of the resulting three
  differentiated terms of the term in line marked \eqref{eq:term-three} combines with the sum in the line marked
  \eqref{eq:term-one}, and it follows that
        \begin{equation*}
          \partial_u \sum_{n \geq r-1}x^n\big([x^n]S_{k-i-1,r-1}(x, u)\big) = \partial_u S_{k-i-1, r-1}(x, u).
        \end{equation*}
  \item Make the substitution (with relevant term replacement) from Proposition~\ref{prop:k_t:enum}
  \begin{equation*}
    [x^n]S_{t}(x, u)\big|_{u=1} = \frac{t+1}{(k+1)n+t+1}\binom{(k+1)n+t+1}{n}
  \end{equation*}
  and use the identity \eqref{eq:FC-summation} to simplify the sum with bounds $0 \leq i \leq t$.
\end{itemize}
Therefore
\begin{align}
 \partial_u S_{t,r}(x, u)\big|_{u=1}
  & = xS_t(x, 1)\sum_{i=0}^t\partial_u S_{k-i-1,r-1}(x, u)\big|_{u=1}\notag \\
  & \quad + x\sum_{j=0}^{r-1}x^{j}\partial_u S_{t,r-1-j}(x, u)\big|_{u=1}\frac{t+1}{j+1}\binom{(k+1)j+k-t-1}{j} \label{eq:base-for-app}\\
  & \quad + x^{r}S_t(x, 1)\sum_{i=0}^t\frac{(k-i)^2}{(k+1)(r-1)+k-i}\binom{(k+1)(r-1)+k-i}{r-1}. \notag
\end{align}
\begin{theorem}\label{thm:gen-func-down-steps}
  For $r \geq 2$, the generating function for $S_{t,r}(x, u)$ after differentiating with respect to $u$ and setting $u =1$ is
  \begin{align*}
    \partial_u S_{t,r}(x, u)\big|_{u=1}
    & = \frac{1-(1-z)^{t+1}}{z(1-z)^t} - \sum_{j=1}^rz^{j-1}(1-z)^{kj}\frac{t+1}{(k+1)j+t+1}\binom{(k+1)j+t+1}{j}\\
    & \quad + \sum_{j = 1}^{r-1}z^j(1-z)^{jk}\frac{t+1}{(k+1)j+t+1}\binom{(k+1)j+t+1}{j}\\
    & \quad - z^r(1-z)^{rk}\frac{t(t+1)}{(k+1)r+t+1}\binom{(k+1)r+t+1}{r}.
  \end{align*}
\end{theorem}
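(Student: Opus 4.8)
The plan is to prove the formula by strong induction on $r$, driving the argument with the explicit recurrence~\eqref{eq:base-for-app}. The key structural observation is that~\eqref{eq:base-for-app} expresses $\partial_u S_{t,r}(x,u)\big|_{u=1}$ purely in terms of quantities whose second index is strictly smaller than $r$: the first line involves $\partial_u S_{k-i-1,r-1}$, and the convolution in the middle line involves $\partial_u S_{t,r-1-j}$ for $0\le j\le r-1$, while the last line carries no derivative of $S$ at all. Consequently, in contrast to the implicit equation appearing in the proof of Proposition~\ref{thm:down-steps-end}, no bootstrapping or uniqueness argument is needed here; ordinary induction closes the loop. The base case $r=1$ is already established in the proof of Theorem~\ref{thm:r=1}, and I would first check that the claimed expression specializes correctly there (the sum $\sum_{j=1}^{r-1}$ being empty and $\sum_{j=1}^{r}$ reducing to a single term). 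I would also record that the formula evaluates to $0$ when $t=-1$, which is exactly $\partial_u S_{-1,\cdot}=0$, so that the summand $i=t=k$ contributing an $S_{-1,r-1}$ factor requires no separate treatment.

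For the inductive step ($r\ge 2$) I would substitute into the right-hand side of~\eqref{eq:base-for-app}: the induction hypothesis for every term with second index in $\{1,\dots,r-1\}$, and the explicit expression $\partial_u S_{t,0}(x,u)\big|_{u=1}=\frac{1}{z(1-z)^t}-\frac1z-t$ from the proof of Theorem~\ref{thm:r=0} for the single term where the index drops to $0$ (the $j=r-1$ summand in the convolution). Using $x=z(1-z)^k$ together with the explicit $S_t(x,1)=(1-z)^{-t-1}$, the prefactors collapse to $xS_t(x,1)=z(1-z)^{k-t-1}$ and $x^rS_t(x,1)=z^r(1-z)^{kr-t-1}$, and every $x^{j}$ inside the convolution becomes $z^{j}(1-z)^{kj}$.

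The crux of the computation is collapsing the inner sums over $i$. After inserting the formula for $\partial_u S_{k-i-1,r-1}$, writing $m=k-i-1$ turns the arising coefficient $\frac{k-i}{(k+1)j+k-i}\binom{(k+1)j+k-i}{j}$ into the generalized Fuss--Catalan number $C_{j,m}$; summing over $i=0,\dots,t$, i.e.\ over $m=k-t-1,\dots,k-1$, and invoking identity~\eqref{eq:FC-summation} gives $\sum_{i=0}^t C_{j,k-i-1}=\frac{t+1}{j+1}\binom{(k+1)j+k-t-1}{j}$, which is precisely the weight already visible in the convolution line of~\eqref{eq:base-for-app}. The same substitution recasts the last line of~\eqref{eq:base-for-app} as a weighted Fuss--Catalan sum $\sum_{m}(m+1)C_{r-1,m}$ over the same range. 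Once all $i$-sums are in closed form, the overall factor $x\cdot x^{j}=z^{j+1}(1-z)^{k(j+1)}$ shifts the convolution index by one, and the several $j$-sums reorganize into the two telescoping families $\sum_{j=1}^{r}$ and $\sum_{j=1}^{r-1}$ of the target expression, together with the isolated boundary contribution $-z^r(1-z)^{rk}\frac{t(t+1)}{(k+1)r+t+1}\binom{(k+1)r+t+1}{r}$.

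I expect the main obstacle to be the bookkeeping in this final reorganization: tracking the two $j$-sums with opposite signs, verifying that the index shift produces exact cancellation rather than leftover terms, and confirming that the $r$-independent leading term $\frac{1-(1-z)^{t+1}}{z(1-z)^t}$ is reproduced. This last point is delicate precisely because the true $r=0$ value carries $(1-z)^{t}$ (not $(1-z)^{t+1}$) in its numerator, so that $\partial_u S_{t,0}\big|_{u=1}$ and the value one would obtain by naively setting $r=0$ in the claimed formula differ by exactly $1$; this discrepancy must be absorbed cleanly by the $j=r-1$ term of the convolution, and I would verify that shift with particular care, as it is the step most prone to error. As throughout this section, the final identity can be cross-checked for small $k$, $t$, $r$ against the accompanying SageMath worksheet.
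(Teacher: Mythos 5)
Your proposal is, in outline, the same proof as the paper's (carried out in Appendix~A): strong induction on $r$ with base case Theorem~\ref{thm:r=1}, substitution of the already-known expressions into the recurrence~\eqref{eq:base-for-app}, collapse of the $i$-sums via~\eqref{eq:FC-summation}, and algebraic reorganization into the claimed form. Moreover, the point you single out as most delicate is exactly right, and you resolve it more carefully than the paper's own write-up does: the $j=r-1$ summand of the convolution must carry the \emph{true} value $\partial_u S_{t,0}(x,u)\big|_{u=1}=\frac{1}{z(1-z)^t}-\frac{1}{z}-t$, which is smaller by exactly $1$ than the naive $r=0$ specialization of the claimed formula. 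The paper's displayed computation substitutes the naive value for every $j$ \emph{including} $j=r-1$ (an excess of $x^{r}\,\frac{t+1}{r}\binom{(k+1)(r-1)+k-t-1}{r-1}$), and this is cancelled only because its later step ``including the terms for $\ell=0$ into each of the sums'' is itself not value-preserving: the third double sum has no $\ell=0$ term when $j=r-1$, so that step silently subtracts the same quantity. The two inaccuracies compensate and the final formula is correct, but your version of this step is the clean one.

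The genuine gap in your plan is the characterization of the final reorganization as ``bookkeeping'' and ``exact cancellation'' under index shifts. It is not: once the induction hypothesis is inserted into the middle line of~\eqref{eq:base-for-app}, you obtain double sums whose inner sums are convolutions of the form $\sum_{j+\ell=m-1}\frac{t+1}{(k+1)\ell+t+1}\binom{(k+1)\ell+t+1}{\ell}\cdot\frac{t+1}{j+1}\binom{(k+1)j+k-t-1}{j}$, and no re-indexing or sign-tracking makes these cancel; they must be evaluated in closed form, namely to $\frac{t+1}{(k+1)m+t+1}\binom{(k+1)m+t+1}{m}$. This is precisely Proposition~\ref{prop:gen-func-simp}, which the paper singles out as the key to the proof and proves separately; it is the coefficient form of the functional equation $S_t(x,1)=1+xS_t(x,1)\sum_{i=0}^{t}S_{k-i-1}(x,1)$, equivalently a double count of $k_t$-Dyck paths by first return. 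With that identity added to your toolkit (it is needed both for the two large double sums and for the $t$-weighted boundary sum), your induction closes exactly as intended; without it, the $j$-sums will not reduce to the two single sums and the isolated boundary term of Theorem~\ref{thm:gen-func-down-steps}.
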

The key to the proof of the above theorem is the following identity, which follows from the symbolic
decomposition of $k_t$-Dyck paths given in \eqref{eq:symbolic-decomposition}. The remainder of the proof
of Theorem~\ref{thm:gen-func-down-steps} consists of algebraic manipulations and is detailed in Appendix~\ref{ap:proof}.
\begin{proposition}\label{prop:gen-func-simp}
  For $n \geq 0$, and $0 \leq t \leq k$ we have that
  \begin{align*}
    &\sum_{j=0}^{n-1}\frac{t+1}{(k+1)(n-1-j)+t+1}\binom{(k+1)(n-1-j)+t+1}{n-1-j}\frac{t+1}{j+1}\binom{(k+1)j+k-t-1}{j}\\
    & \quad = \frac{t+1}{(k+1)n+t+1}\binom{(k+1)n+t+1}{n}.
  \end{align*}
\end{proposition}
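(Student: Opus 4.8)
The plan is to read each summand on the left-hand side as a product of two coefficients of known generating functions, and then to recognise the whole sum as a single Cauchy product that is governed by the first-return functional equation for $S_t(x)$. First I would identify the two factors. By Proposition~\ref{prop:k_t:enum} the first factor is a generalized Fuss--Catalan number, namely
\[
\frac{t+1}{(k+1)(n-1-j)+t+1}\binom{(k+1)(n-1-j)+t+1}{n-1-j} = C_{n-1-j,t} = [x^{n-1-j}]\,S_t(x).
\]
For the second factor I would substitute $s=k-i-1$ and apply the partial-sum identity~\eqref{eq:FC-summation} with path-length variable $j$ and lower index $k-1-t$, obtaining
\[
\sum_{i=0}^{t} C_{j,\,k-i-1} = \sum_{s=k-1-t}^{k-1} \frac{s+1}{(k+1)j+s+1}\binom{(k+1)j+s+1}{j} = \frac{t+1}{j+1}\binom{(k+1)j+k-t-1}{j}.
\]
Thus the second factor equals $\sum_{i=0}^{t} C_{j,k-i-1}$.

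Introducing the auxiliary series $T_t(x) := \sum_{i=0}^{t} S_{k-i-1}(x)$ (with the convention $S_{-1}(x)=1$), its coefficients are exactly the second factor, $[x^{j}]\,T_t(x) = \sum_{i=0}^{t} C_{j,k-i-1}$. Hence the left-hand side is the convolution
\[
\sum_{j=0}^{n-1} [x^{n-1-j}]S_t(x)\cdot[x^{j}]T_t(x) = [x^{n-1}]\bigl(S_t(x)\,T_t(x)\bigr).
\]
Now the symbolic decomposition~\eqref{eq:symbolic-decomposition} translates into the functional equation $S_t(x)=1+x\,S_t(x)\,T_t(x)$, so that $S_t(x)\,T_t(x)=(S_t(x)-1)/x$. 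Therefore, for $n\geq 1$,
\[
[x^{n-1}]\bigl(S_t(x)\,T_t(x)\bigr) = [x^{n-1}]\frac{S_t(x)-1}{x} = [x^{n}]\bigl(S_t(x)-1\bigr) = [x^{n}]S_t(x) = C_{n,t},
\]
which is precisely the right-hand side. (At $n=0$ the left-hand side is an empty sum equal to $0$, matching $[x^{0}]\bigl(S_t(x)-1\bigr)=0$; the displayed right-hand side $C_{0,t}=1$ should therefore be read as the assertion for $n\geq 1$.)

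I expect the only real obstacle to be the middle identification: recognising the opaque factor $\tfrac{t+1}{j+1}\binom{(k+1)j+k-t-1}{j}$ as the partial sum $\sum_{i=0}^{t} C_{j,k-i-1}$, which is exactly the coefficient of the series $T_t$ appearing in the first-return decomposition. Once this bridge is in place, the functional equation collapses the convolution in a single line and no further manipulation of binomial coefficients is needed. A minor technical point to check is the edge case $t=k$, where the term $i=t$ contributes $S_{-1}=1$; this is consistent with the convention $\mathcal{S}_{-1}=\varepsilon$ built into~\eqref{eq:symbolic-decomposition}, so the functional equation, and hence the whole argument, remains valid.
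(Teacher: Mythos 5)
Your proposal is correct and is essentially the paper's own argument rendered in generating-function language: the paper proves the identity by double counting $k_t$-Dyck paths of length $(k+1)n$ via the first-return decomposition \eqref{eq:symbolic-decomposition}, collapsing the inner sum over $i$ with \eqref{eq:FC-summation} exactly as you do, and your coefficient extraction from $S_t(x)\,T_t(x) = (S_t(x)-1)/x$ is precisely the functional-equation transcription of that same double count. Your parenthetical remark is also correct: the identity genuinely requires $n \geq 1$ (at $n = 0$ it reads $0 = 1$), a slight imprecision in the paper's statement of the proposition.
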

This result can be seen as a generalized Chu--Vandermonde-type
identity, and is similar (though not equivalent) to the
family of identities given in~\cite{Rothe:1793:serierum-reversione}
and mentioned in~\cite{Gould:1999:power-sum}, known as Rothe--Hagen
identities. In particular, it can be proved algebraically using these identities, but we still provide a bijective argument here, 
as we are not aware of this combinatorial interpretation in the literature.

\begin{proof}[of Proposition~\ref{prop:gen-func-simp}]
  We enumerate $k_t$-Dyck paths in two ways to obtain this identity. We have from Proposition~\ref{prop:k_t:enum}
  that the number of $k_t$-Dyck paths of length $(k+1)n$ is
  \begin{equation*}
    \frac{t+1}{(k+1)n + t + 1}\binom{(k+1)n + t + 1}{n}.
  \end{equation*}
  However, the symbolic decomposition of $k_t$-Dyck paths given in Figure~\ref{fig:kDyckdec} shows that
  we can decompose a $k_t$-Dyck path of length $(k+1)n$ into a $k_{k-i-1}$-Dyck path of
  length $(k+1)j$ (where $0 \leq i \leq t$ and $0 \leq j \leq n-1$) and a $k_t$-Dyck path of length $(k+1)(n-1-j)$.
  Using identity \eqref{eq:FC-summation} to simplify the double sum results in
  \begin{align*}
    & \sum_{j=0}^{n-1}\Bigg[\sum_{i=0}^t\frac{k-i}{(k+1)j+k-i}\binom{(k+1)j+k-i}{j}\Bigg]\\
    & \qquad \qquad \qquad \qquad \qquad \times \frac{t+1}{(k+1)(n-1-j)+t+1}\binom{(k+1)(n-1-j)+t+1}{n-1-j}\\
    & = \sum_{j=0}^{n-1}\frac{t+1}{j+1}\binom{(k+1)j+k-t-1}{j}\frac{t+1}{(k+1)(n-1-j)+t+1}\binom{(k+1)(n-1-j)+t+1}{n-1-j}.
  \end{align*}
  Since these are both valid ways to enumerate $k_t$-Dyck paths of length $(k+1)n$, the identity holds.
\end{proof}
\begin{corollary}\label{cor:downsteps-explicit-gf}
  The total number of down-steps between the $r$-th and the $(r+1)$-th
  up-steps in $k_t$-Dyck paths of length $(k+1)n$ for $r < n$ is
  \begin{align*}
    s_{n, t, r} 
    & = \frac{k}{n+1}\binom{(k+1)n}{n} - \frac{k-t}{n+1}\binom{(k+1)n+t}{n}\\
    & \quad + \sum_{j=1}^{r} \frac{t+1}{(k+1)j + t + 1}\frac{k}{n-j+1} \binom{(k+1)j + t +1}{j} \binom{(k+1)(n-j)}{n-j}
  \end{align*}
\end{corollary}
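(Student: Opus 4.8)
The plan is to extract the coefficient $[x^n]$ from the closed form of $\partial_u S_{t,r}(x,u)\big|_{u=1}$ established in Theorem~\ref{thm:gen-func-down-steps}, working throughout under the substitution $x = z(1-z)^k$ and pulling out coefficients term by term via Lemma~\ref{lem:cauchy}. That generating function has four pieces: the ``leading'' term $\frac{1-(1-z)^{t+1}}{z(1-z)^t}$, a sum over $z^{j-1}(1-z)^{kj}$, a sum over $z^j(1-z)^{jk}$, and a single $z^r(1-z)^{rk}$ term. My first observation is that the substitution makes several pieces collapse: since $z^j(1-z)^{kj} = (z(1-z)^k)^j = x^j$, the entire second sum (with $1\le j\le r-1$) and the isolated final term (with $j=r$) contribute nothing to $[x^n]$, because here $j\le r<n$ and $[x^n]x^j = 0$. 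This is precisely where the hypothesis $r<n$ enters, and it simplifies the extraction considerably.

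Next I would treat the first sum. Writing $z^{j-1}(1-z)^{kj} = x^{j-1}(1-z)^k = x^{j-1}\cdot\frac{x}{z} = \frac{x^j}{z}$ (using $(1-z)^k = x/z$), I get $[x^n]z^{j-1}(1-z)^{kj} = [x^{n-j}]\frac{1}{z}$. A short Cauchy-integral computation, or Lemma~\ref{lem:cauchy} with $a=1$ and the other exponents zero, gives $[x^m]\frac{1}{z} = \binom{(k+1)m+1}{m+1} - (k+1)\binom{(k+1)m}{m} = -\frac{k}{m+1}\binom{(k+1)m}{m}$, the last equality being the binomial collapse already used repeatedly in the paper. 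Setting $m=n-j$ and folding in the factor $\frac{t+1}{(k+1)j+t+1}\binom{(k+1)j+t+1}{j}$ attached to each summand (together with the overall minus sign in front of the sum, which flips $-\frac{k}{n-j+1}$ to $+\frac{k}{n-j+1}$), I recover exactly the summation $\sum_{j=1}^r \frac{t+1}{(k+1)j+t+1}\frac{k}{n-j+1}\binom{(k+1)j+t+1}{j}\binom{(k+1)(n-j)}{n-j}$ appearing in the claim.

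Finally I would extract the leading term. Splitting $\frac{1-(1-z)^{t+1}}{z(1-z)^t} = \frac{1}{z(1-z)^t} - \frac{1-z}{z}$, Lemma~\ref{lem:cauchy} gives $[x^n]\frac{1}{z(1-z)^t} = \frac{t-k}{n+1}\binom{(k+1)n+t}{n}$, while $[x^n]\frac{1-z}{z} = [x^n]\frac{1}{z} = -\frac{k}{n+1}\binom{(k+1)n}{n}$ for $n\ge 1$ (the constant $-1$ being irrelevant). Subtracting yields $\frac{k}{n+1}\binom{(k+1)n}{n} - \frac{k-t}{n+1}\binom{(k+1)n+t}{n}$, the first two terms of the stated formula. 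Summing the three surviving contributions gives the result.

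I do not expect a genuine obstacle here: the argument is a coefficient extraction that reduces, via the identity $z^j(1-z)^{kj}=x^j$, to one application of Lemma~\ref{lem:cauchy} plus the standard collapse $\binom{(k+1)m+1}{m+1}-(k+1)\binom{(k+1)m}{m} = -\frac{k}{m+1}\binom{(k+1)m}{m}$. The only point requiring care is the sign bookkeeping and verifying that the $z^j(1-z)^{jk}$ sum and the isolated $z^r(1-z)^{rk}$ term truly vanish, which happens exactly because $r<n$; if instead $r=n$ these terms would survive, consistent with the $r=n$ boundary case being handled separately in Proposition~\ref{prop:last-downsteps}.
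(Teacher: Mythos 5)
Your proposal is correct and follows essentially the same route as the paper: coefficient extraction from the closed form in Theorem~\ref{thm:gen-func-down-steps} term by term via Lemma~\ref{lem:cauchy} under the substitution $x = z(1-z)^k$, with the same splitting of the leading term and the same observation that the $z^j(1-z)^{jk} = x^j$ pieces vanish for $r<n$ (a point the paper leaves implicit, citing only the remark after Proposition~\ref{thm:down-steps-end}). Your reduction of $z^{j-1}(1-z)^{kj}$ to $x^j/z$ and a shifted coefficient $[x^{n-j}]\tfrac{1}{z}$ is a minor cosmetic variant of the paper's direct application of the lemma, yielding the identical formula.
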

By using \eqref{eq:FC_telescope}, this result corresponds to that of
Corollary~\ref{cor:downsteps-explicit}. In fact, we can even rewrite
this result in the form
\begin{equation*}
  s_{n, t, r} = s_{n, t, 0} + \sum_{j=1}^{r} \frac{t+1}{(k+1)j + t + 1}\frac{k}{n-j+1} \binom{(k+1)j + t +1}{j} \binom{(k+1)(n-j)}{n-j}.
\end{equation*}
\begin{proof}[of Corollary~\ref{cor:downsteps-explicit-gf}]
  We begin by calculating $[x^{n}]z^{j-1}(1-z)^{kj}$ for $j \geq 1$, using Lemma \ref{lem:cauchy},
  \begin{equation*}
    [x^{n}]z^{j-1}(1-z)^{kj} = -\frac{k}{n-j+1}\binom{(k+1)(n-j)}{n-j}.
  \end{equation*}
  Additionally, since $\frac{1-(1-z)^{t+1}}{z(1-z)^{t}} = \frac{1}{z(1-z)^t} - \frac{1}{z} + 1$, in a similar way we have
  \begin{equation*}
    [x^{n}]\frac{1}{z(1-z)^{t}} = \binom{(k+1)n+t}{n}\frac{t-k}{n+1},
  \end{equation*}
  as well as
  \begin{equation*}
    [x^{n}]\frac{1}{z}  = -\frac{k}{n+1}\binom{(k+1)n}{n}.
  \end{equation*}
  Combining these with the result in Theorem~\ref{thm:gen-func-down-steps}, we have that for $r < n$
  \begin{align*}
    [x^{n}]\partial_u S_{t,r}(x, u) & = [x^{n}]\frac{1-(1-z)^{t+1}}{z(1-z)^t}\\
                                                            & \quad - \sum_{j=1}^r\frac{t+1}{(k+1)j+t+1}\binom{(k+1)j+t+1}{j}[x^{n}]z^{j-1}(1-z)^{kj}\\
                                                            & = \frac{k}{n+1}\binom{(k+1)n}{n} - \frac{k-t}{n+1}\binom{(k+1)n+t}{n}\\
                                                            & \quad + \sum_{j=1}^r\frac{t+1}{(k+1)j+t+1}\binom{(k+1)j+t+1}{j}\frac{k}{n-j+1}\binom{(k+1)(n-j)}{n-j}.\qedhere
  \end{align*}
\end{proof}

In this section we have shown that the generating function approach is
just as suitable as the bijective approach for the investigation of
the down-step parameter, with the added benefit of providing a
strategy for computing the variance for all fixed values of $r$. In
fact, combining these two approaches reveals interesting summation
identities like \eqref{eq:FC_telescope}.

\section{Asymptotics for down-step statistics}\label{sec:asy}

In this section we calculate asymptotics for the average number of down-steps as well as the variance 
in the number of down-steps in $k_t$-Dyck paths of length $(k+1)n$. These multivariable computations 
can become quite involved, and thus were performed\footnote{A
  worksheet containing these computations can be found at
  \url{https://gitlab.aau.at/behackl/kt-dyck-downstep-code}.}
using the asymptotic expansions
module~\cite{Hackl-Heuberger-Krenn:2016:asy-sagemath} in
SageMath~\cite{SageMath:2020:9.0}.

Formally, we conduct this analysis by considering the random variable $X_{n,t,r}$ which models the
number of down-steps between the $r$-th and the $(r+1)$-th up-steps in a $k_{t}$-Dyck path of
length $(k+1)n$ chosen uniformly at random.

\subsection{Expected Value}

From the explicit formulas for $s_{n,t,r}$ provided in Corollary~\ref{cor:downsteps-explicit} and
Corollary~\ref{cor:downsteps-explicit-gf} (for the case where $0\leq r < n$) as well as
Proposition~\ref{prop:last-downsteps} and Proposition~\ref{thm:down-steps-end} (for $r = n$) the expected
number of down-steps can be obtained immediately by the relation
\[ \E X_{n,t,r} = \frac{s_{n,t,r}}{C_{n,t}}. \]
From there, we can determine asymptotic expansions (for $n\to\infty$ and with $k$, $t$
fixed) of down-steps between the $r$-th and $(r+1)$-th up-steps.

An asymptotic expansion with error term $O(1/n)$ can be obtained
directly from the expressions for $s_{n, t, r}$ and $C_{n, t}$. Expansions of arbitrary precision can be obtained by
rewriting the products as factorials and applying Stirling's
formula, cf.~\DLMF{5.11}{3}; the first few terms are contained in the SageMath worksheet.

\begin{proposition}\label{prop:asy1}
  For $k_t$-Dyck paths of length $(k+1)n$, the following results hold:
\begin{enumerate}
\item  The expected number of down-steps before the first up-step is
  \begin{align*}
    \E X_{n,t,0}  & = \frac{k \ (kn+t+1)\cdots (kn+1)}{(n+1)(t+1) \ ((k+1)n+t)\cdots((k+1)n+1)} -
      \frac{(k-t)(kn+t+1)}{(n+1)(t+1)}\\
    & = \frac{k}{t+1}\bigg(\frac{k^{t+1}}{(k+1)^t}
      - k + t\bigg) + O\bigg(\frac{1}{n}\bigg).
  \end{align*}
\item  For fixed $1 \leq r < n$, the expected number of down-steps between the
  $r$-th and $(r+1)$-th up-steps is
  \begin{align*}
    \E X_{n,t,r}
    & = \frac{k \ (kn+t+1)\cdots (kn+1)}{(n+1)(t+1) \ ((k+1)n+t)\cdots((k+1)n+1)} -
      \frac{(k-t)(kn+t+1)}{(n+1)(t+1)}\\
    & \quad + \sum_{j=1}^{r}\frac{t+1}{(k+1)j+t+1} \binom{(k+1)j+t+1}{j}\\
    & \qquad \qquad \qquad\times \frac{k\ n  \cdots (n-j+2) \ (kn+t+1) \cdots (k(n-j)+1)}{(t+1) \ ((k+1)n + t)\cdots ((k+1)(n - j) + 1)}\\
    & = \frac{k^{t+2}}{(k+1)^{t}}
    \sum_{j=1}^{r} \frac{1}{(k+1)j+t+1} \binom{(k+1)j+t+1}{j}
    \bigg(\frac{k^{k}}{(k+1)^{k+1}}\bigg)^{j} \\
    & \qquad + \frac{k}{t+1}\bigg(\frac{k^{t+1}}{(k+1)^t} - k + t\bigg)      
     + O\bigg(\frac{1}{n}\bigg).
  \end{align*}
\end{enumerate}
\end{proposition}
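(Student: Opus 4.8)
The plan is to start from the identity $\E X_{n,t,r} = s_{n,t,r}/C_{n,t}$ and simply divide the explicit closed forms for $s_{n,t,r}$ by the generalized Fuss--Catalan number $C_{n,t} = \frac{t+1}{(k+1)n+t+1}\binom{(k+1)n+t+1}{n}$ from Proposition~\ref{prop:k_t:enum}. For part (1) I would use the formula for $s_{n,t,0}$ from Theorem~\ref{thm:r=0}, and for part (2) the formula for $s_{n,t,r}$ with $1\le r<n$ from Corollary~\ref{cor:downsteps-explicit-gf}. Since the first two summands of $s_{n,t,r}$ coincide with $s_{n,t,0}$, the $r=0$ computation is reused verbatim inside part (2), and only the extra sum $\sum_{j=1}^{r}\frac{t+1}{(k+1)j+t+1}\frac{k}{n-j+1}\binom{(k+1)j+t+1}{j}\binom{(k+1)(n-j)}{n-j}$ needs separate treatment.

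First I would establish the exact formulas by expanding every binomial coefficient as a quotient of factorials and cancelling. The ratio $\frac{k}{n+1}\binom{(k+1)n}{n}/C_{n,t}$ reduces, after cancelling $n!$ and collapsing the remaining factorials into products of consecutive integers, to $\frac{k}{(n+1)(t+1)}\cdot\frac{(kn+t+1)\cdots(kn+1)}{((k+1)n+t)\cdots((k+1)n+1)}$; here the factor $(k+1)n+t+1$ produced by $1/C_{n,t}$ cancels the leading factor of the denominator product, leaving exactly $t$ factors below and $t+1$ above. An identical cancellation turns $\frac{k-t}{n+1}\binom{(k+1)n+t}{n}/C_{n,t}$ into $\frac{(k-t)(kn+t+1)}{(n+1)(t+1)}$. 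For the summands in part (2), writing $m=n-j$, the ratio $\frac{k}{m+1}\binom{(k+1)m}{m}/C_{n,t}$ becomes $\frac{k}{t+1}$ times a product in which the $j-1$ factors $n\cdots(n-j+2)$ survive from $n!/m!$ (one factor being absorbed by $1/(m+1)$), the $kj+t+1$ factors $(kn+t+1)\cdots(k(n-j)+1)$ come from $(kn+t+1)!/(km)!$, and the $(k+1)j+t$ denominator factors $((k+1)n+t)\cdots((k+1)(n-j)+1)$ remain after the $(k+1)n+t+1$ cancellation. This reproduces the exact expressions displayed in the proposition.

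For the asymptotics I would exploit that each of these products has fixed length (depending only on $k$, $t$, and the fixed index $j$), so each factor is $cn(1+O(1/n))$ with $c\in\{1,k,k+1\}$. In the $r=0$ main term the $t+1$ numerator factors $\sim kn$ outweigh the $t$ denominator factors $\sim(k+1)n$ by one power of $n$, which the prefactor $\frac{1}{n+1}$ cancels, so the term tends to $\frac{k}{t+1}\frac{k^{t+1}}{(k+1)^t}$; the second term tends to $\frac{(k-t)k}{t+1}$, giving the stated limit $\frac{k}{t+1}\big(\frac{k^{t+1}}{(k+1)^t}-k+t\big)$. For the $j$-th summand in part (2) the decisive check is that the powers of $n$ balance exactly, $(j-1)+(kj+t+1)-((k+1)j+t)=0$, so each summand converges; extracting leading coefficients yields $\frac{k^{kj+t+2}}{(k+1)^{(k+1)j+t}}$ times the $n$-independent prefactor, and rewriting $\frac{k^{kj+t+2}}{(k+1)^{(k+1)j+t}}=\frac{k^{t+2}}{(k+1)^t}\big(\frac{k^k}{(k+1)^{k+1}}\big)^j$ factors out the common constant and exhibits the geometric dependence on $j$. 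As $r$ is fixed, the sum over $1\le j\le r$ is finite and the $O(1/n)$ error terms aggregate to $O(1/n)$.

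The main obstacle is purely organizational: tracking precisely how many factors each factorial quotient contributes and confirming the cancellation of the boundary factor $(k+1)n+t+1$, so that the surviving consecutive-integer products have exactly the claimed lengths and the net power of $n$ in each summand vanishes. Once the factor-counting is correct, the passage to the limit is routine, since each fixed-length product contributes only its leading monomial in $n$; higher-order terms of the expansion, if wanted, follow by applying Stirling's formula (cf.~\DLMF{5.11}{3}) to the factorial forms rather than the crude $cn(1+O(1/n))$ estimate.
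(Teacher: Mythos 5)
Your proposal is correct and follows essentially the same route as the paper: the paper likewise obtains Proposition~\ref{prop:asy1} by dividing the explicit formulas from Theorem~\ref{thm:r=0} and Corollary~\ref{cor:downsteps-explicit-gf} by $C_{n,t}$ via the relation $\E X_{n,t,r} = s_{n,t,r}/C_{n,t}$, reading off the $O(1/n)$ expansion from the resulting fixed-length products of consecutive integers, and noting that higher-order terms follow from Stirling's formula. The only difference is that you carry out the factorial cancellations and factor-counting explicitly, whereas the paper delegates these routine verifications to an accompanying SageMath worksheet.
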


Analogously, we obtain the following result for $\E X_{n,t,n}$ as well as $\E X_{n,t,n-r}$.

\begin{proposition}\label{prop:asy2}
  For $k_t$-Dyck paths of length $(k+1)n$ we find:
\begin{enumerate}
\item The expected number of down-steps after the last up-step is
  \begin{align*}
    \E X_{n,t,n} & = \frac{((k+1)(n+1)+t)\cdots ((k+1)n+t+1)}{(n+1) \ (k(n+1)+t+1)\cdots(kn+t+2)} - (t+1)\\
    & = \frac{(k+1)^{k+1}}{k^k} - (t+1) + O\bigg(\frac{1}{n}\bigg).
  \end{align*}
\item\label{itm:prop:asy2:n-r} For fixed $1 \leq r \leq n$, the expected number of down-steps between the $(n-r)$-th and the $(n-r+1)$-th up-steps is
  \begin{align*}
    \E X_{n,t,n-r} & = \frac{((k+1)(n+1)+t)\cdots ((k+1)n+t+1)}{(n+1) \ (k(n+1)+t+1)\cdots(kn+t+2)} - (k+1) \\
    & \quad - \sum_{j = 2}^{r} \binom{(j-1)(k+1)}{j}
    \frac{n \cdots (n-j+2) \ (kn+t+1) \cdots (k(n-j+1)+t+2)}
    {(j-1)((k+1)n+t) \cdots ( (k+1)(n-j+1)+t+1 )}\\
    & = \frac{(k+1)^{k+1}}{k^k} - k-1
    - \sum_{j = 2}^{r}  \frac{1}{j-1}  \binom{(j-1)(k+1)}{j}
    \frac{k^{k(j-1)}}{(k+1)^{(k+1)(j-1)}} + O\bigg(\frac{1}{n}\bigg).
  \end{align*}
  \end{enumerate}
\end{proposition}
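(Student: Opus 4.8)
The plan is to use the relation $\E X_{n,t,r} = s_{n,t,r}/C_{n,t}$ together with the explicit formulas already established, exactly as was done for Proposition~\ref{prop:asy1}. For part~(1) I would start from $s_{n,t,n} = C_{n+1,t} - (t+1)C_{n,t}$ (Proposition~\ref{prop:last-downsteps}), and for part~(2) from the formula for $s_{n,t,n-r}$ in Corollary~\ref{cor:r-last-downsteps}. Dividing through by $C_{n,t}$ reduces the entire problem to understanding ratios $C_{m,t}/C_{n,t}$ of generalized Fuss--Catalan numbers for indices $m$ close to $n$.

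For part~(1) we have
\[
  \E X_{n,t,n} = \frac{C_{n+1,t}}{C_{n,t}} - (t+1),
\]
so the only task is to simplify the single ratio $C_{n+1,t}/C_{n,t}$. Substituting the closed form~\eqref{eq:gen-fuss-catalan} and cancelling the common binomial factors, the ratio collapses into a quotient of products: the $k+1$ consecutive factors $(k+1)n+t+1,\dots,(k+1)(n+1)+t$ in the numerator against $(n+1)$ times the $k$ consecutive factors $kn+t+2,\dots,k(n+1)+t+1$ in the denominator, which is precisely the exact expression in the statement. For the asymptotics, each of the $k+1$ numerator factors equals $(k+1)n + O(1)$ and each of the $k$ denominator factors equals $kn + O(1)$, so the leading behaviour is $\frac{((k+1)n)^{k+1}}{(n)(kn)^{k}} = \frac{(k+1)^{k+1}}{k^k}$; expanding the product quotient to one further order confirms that the correction is $O(1/n)$.

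For part~(2), dividing the formula of Corollary~\ref{cor:r-last-downsteps} by $C_{n,t}$ gives
\[
  \E X_{n,t,n-r} = \frac{C_{n+1,t}}{C_{n,t}} - (k+1) - \sum_{j=2}^{r} \frac{1}{j-1}\binom{(j-1)(k+1)}{j}\,\frac{C_{n-j+1,t}}{C_{n,t}}.
\]
The leading ratio is handled exactly as in part~(1). Each remaining ratio $C_{n-j+1,t}/C_{n,t}$ again simplifies to a quotient of products (now descending in the index) matching the exact terms in the statement, and its limit is $\bigl(\tfrac{k^k}{(k+1)^{k+1}}\bigr)^{j-1}$, since each single backward step in the index contributes a factor tending to the reciprocal of the growth rate found in part~(1). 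Because $r$ is fixed, the sum has a bounded number of summands and the individual $O(1/n)$ errors combine additively, yielding the claimed expansion.

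I expect the main obstacle to be the asymptotic bookkeeping rather than anything conceptual: identifying the leading constants is immediate, but verifying that the error is genuinely $O(1/n)$ (and not merely $o(1)$) requires expanding the product quotients to second order. This is cleanest if one rewrites the rising and falling factorial products as ratios of factorials and applies Stirling's formula uniformly, cf.~\DLMF{5.11}{3}; the finite, $r$-independent number of summands in part~(2) then guarantees a uniform error bound. As elsewhere in this section, these manipulations are routine but tedious, and are best confirmed with the accompanying SageMath worksheet.
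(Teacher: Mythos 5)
Your proposal is correct and follows essentially the same route as the paper: it computes $\E X_{n,t,r} = s_{n,t,r}/C_{n,t}$ using Proposition~\ref{prop:last-downsteps} for $r=n$ and Corollary~\ref{cor:r-last-downsteps} for $r=n-r$, reduces everything to ratios $C_{m,t}/C_{n,t}$ of generalized Fuss--Catalan numbers written as quotients of consecutive-factor products, and extracts the constants $(k+1)^{k+1}/k^k$ and $\bigl(k^k/(k+1)^{k+1}\bigr)^{j-1}$ with an $O(1/n)$ error via Stirling, exactly as the paper prescribes. The finitely-many-summands observation for part~(2) and the appeal to \DLMF{5.11}{3} for the error order also match the paper's treatment.
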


\begin{remark}
  Note that extracting the asymptotic behaviour from the explicit formulas is,
  of course, not the only viable strategy: alternatively, we could
  extract the growth directly from the generating functions by investigating
  the substitution $x = z (1 - z)^k$ from \eqref{eq:substitution}, where the variable $x$ symbolically
  corresponds to the number of up-steps in $k_t$-Dyck paths.

  To be more precise, instead of using Lemma~\ref{lem:cauchy} for translating
  between the ``generating function world'' and the ``coefficient world'',
  we could also observe that $\varphi(z) := z(1-z)^k$ has an analytic inverse
  as long as $\varphi'(z) \neq 0$. And indeed: $z_0 = 1/(k+1)$ is the zero
  of smallest modulus of the derivative $\varphi'(z)$.
  Singular inversion~\cite[Chapter VI.7]{Flajolet-Sedgewick:ta:analy}
  (which basically expands $\varphi(z)$ as a power series in $z-z_0$ to
  find that the inverse function has a square root singularity at
  $x_0 = \varphi(z_0) = \frac{k^k}{(k+1)^{k+1}}$)
  then allows to obtain the asymptotic growth of the coefficients of
  the inverse function of $\varphi(z)$ (or compositions thereof,
  which is what obtained in the formulas throughout
  Section~\ref{sec:generating-functions}) by means of singularity analysis.
  These calculations can also be carried out with SageMath's
  module on asymptotic expansions~\cite{Hackl-Heuberger-Krenn:2016:asy-sagemath}.
\end{remark}

It is worthwhile to take a closer look at the asymptotic expansions given in
Propositions~\ref{prop:asy1} and~\ref{prop:asy2}. In particular, if we divide these expansions by
$k$ and let $k \to \infty$, a very particular behaviour (illustrated in
Figures~\ref{fig:downsteps-k-asy}, \ref{fig:asymptotic-plots:a}, \ref{fig:asymptotic-plots:b}) arises.
Note that dividing by $k$ is fairly intuitive as $k_t$-Dyck
paths need to have $k$ down-steps for every up-step, normalization thus yields a parameter
for down-steps that is on the same scale as the number of up-steps.

For the \textit{beginning of paths} (Proposition~\ref{prop:asy1}), the limiting behaviour depends
on $t$.  In particular, for $t=0$ we obtain
\begin{equation}\label{eq:lamb1}
\lim_{k \to \infty} \lim_{n \to \infty}
\frac{\E X_{n,0,r}}{k} =
\sum_{j=1}^r \frac{j^{j-1}}{j!} \frac{1}{e^j},
\end{equation}
and for $t=k$,
\begin{equation}\label{eq:lamb11}
\lim_{k \to \infty} \lim_{n \to \infty}
\frac{\E X_{n,k,r}}{k} =
\sum_{j=1}^{r+1} \frac{j^{j-1}}{j!} \frac{1}{e^{j}}.
\end{equation}
Notice that~\eqref{eq:lamb11} is the same sum as in~\eqref{eq:lamb1} with a shifted upper
limit of $r+1$ instead of $r$.
This means that for fixed $r$ and $k \to \infty$, the quantity
$\E X_{n,k,r} / k$ has the same asymptotic behaviour as
$\E X_{n,0,r+1} / k$. This is also indicated by Figure~\ref{fig:asymptotic-plots:c},
compare the values represented by the red dots (corresponding to $\E X_{n,k,r}/k$)
to the values represented by the blue dots one unit to the right
(corresponding to $E X_{n, 0, r+1}/k$).

In contrast, analogous limiting expressions
for the \textit{end of paths} (Proposition~\ref{prop:asy2})
do not depend on $t$, with the exception of down-steps after the last up-step.
Namely, for $k\to \infty$ we have
\begin{equation*}
\lim_{n \to \infty}
\frac{\E X_{n,t,n}}{k} = e - \frac{2t + 2 - e}{2k} + O\bigg(\frac{1}{k^2}\bigg).
\end{equation*}
Furthermore, for fixed $r \geq 1$ we have
\begin{equation}\label{eq:lamb2}
\lim_{k \to \infty} \lim_{n \to \infty}
\frac{\E X_{n,t,n-r}}{k} =
e - \sum_{j=1}^{r}\frac{(j-1)^{j-1}}{j!}\frac{1}{e^{j-1}}.
\end{equation}
Together with the observation that the asymptotic main term
in $n$ given in Proposition~\ref{prop:asy2}~(\ref{itm:prop:asy2:n-r})
does not depend on $t$,
these considerations explain why the rightmost points in
Figure~\ref{fig:asymptotic-plots:d} (i.e.,
those corresponding to $r=n$) are evenly distributed, and the points for $r<n$ nearly coincide for
$0 \leq t \leq k$.

The expressions in~\eqref{eq:lamb1} and~\eqref{eq:lamb2} are related to $T(x)$, the
\textit{Cayley tree function}\footnote{The Cayley tree function $T(x)$ is
  also strongly related to the \emph{Lambert W function} $W(x)$ via $-W(-x) = T(x)$.}
which satisfies the equation $T(x) = x \exp(T(x))$ (see \cite[VI.16]{Flajolet-Sedgewick:ta:analy}), as follows.
We have $T(x) = \sum_{j \geq 1}   \frac{j^{j-1}}{j!} x^j $
and $\frac{1}{T(x)} = \frac{1}{x} -\sum_{j\geq 0}\frac{j^{j}}{(j+1)!}x^j$
(see \OEIS{A000169} in the OEIS~\cite{OEIS:2022}).
The right-hand side expressions in~\eqref{eq:lamb1} and~\eqref{eq:lamb2} are partial sums
of these power series, upon substitution of $x=1/e$.
Since we have $T(1/e) = 1$,
and since the sequence $(s_{n,t,r})_{0 \leq r \leq n}$ is increasing (strongly, except for $t=k$, $r=n-1$),
for each $0< \beta < 1$ we have
\[ 
  \lim_{k \to \infty} \lim_{n \to \infty} \frac{s_{n,t,[\beta n]}}{kC_{n,t}} = 1.
\]
This observation is illustrated in Figure~\ref{fig:downsteps-k-asy}.

\begin{figure}[ht]
  \centering
  \pgfplotstabletranspose[input colnames to={x1}, colnames from={x1}]{\datatableA}{figures/downsteps-k-asy-1.dat}
  \pgfplotstabletranspose[input colnames to={x2}, colnames from={x2}]{\datatableB}{figures/downsteps-k-asy-2.dat}
  \pgfplotstabletranspose[input colnames to={x3}, colnames from={x3}]{\datatableC}{figures/downsteps-k-asy-3.dat}
  \pgfplotstabletranspose[input colnames to={x4}, colnames from={x4}]{\datatableD}{figures/downsteps-k-asy-4.dat}
  \begin{tikzpicture}[scale=0.8]
    \begin{axis}[
      xlabel = $r/n$,
      xtick distance = {0.2},
      ytick = {0, 0.5, 1, 1.5, 2, 2.5, 3},
      x = 8cm,
      legend pos = south east,
      tick label style={/pgf/number format/1000 sep=}]
      \addplot [gray!33, line width=1pt] table[x=x1, y=y1] {\datatableA};
      \addplot [gray!66, line width=1pt] table[x=x2, y=y2] {\datatableB};
      \addplot [gray, line width=1pt] table[x=x3, y=y3] {\datatableC};
      \addplot [black, line width=1pt] table[x=x4, y=y4] {\datatableD};
      \draw[help lines, blue] (-0.1,1) -- (1.1,1);
    \end{axis}
  \end{tikzpicture}
  \caption{Distribution of $X_{n, t, r}/k$ for $t = 7$ and several combinations
    of values for $n$ and $k$:
    $(n, k) \in \{(10, 100), (40, 400), (160, 1600), (640, 6400)\}$. Darker lines correspond to larger values of $n$ and
  $k$.}
  \label{fig:downsteps-k-asy}
\end{figure}
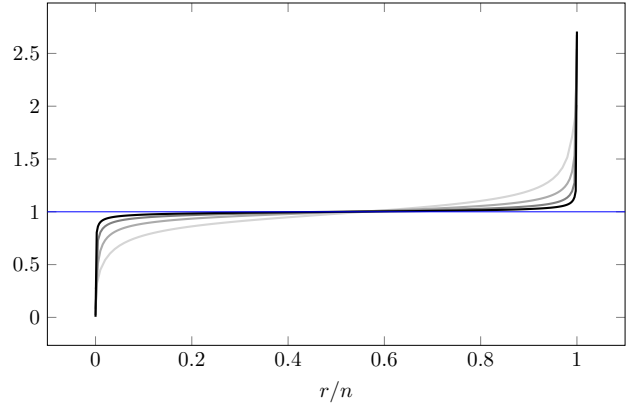

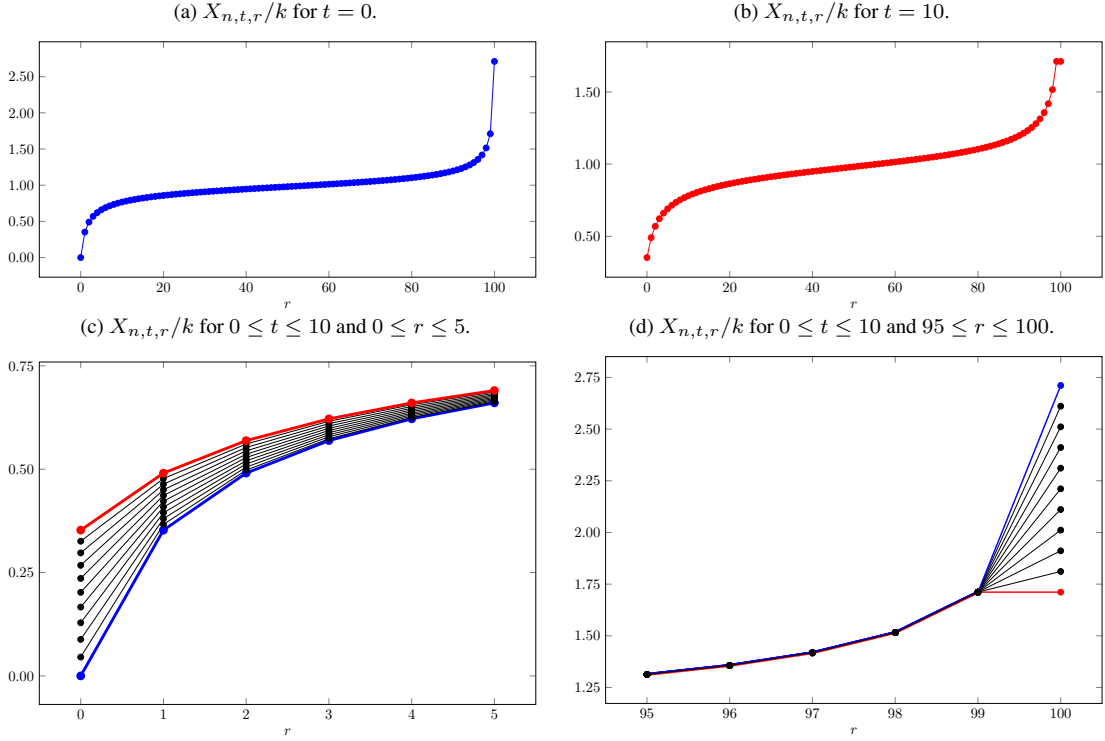
\begin{figure}[ht]
  \begin{subfigure}[t]{0.49\linewidth}
    \subcaption{$X_{n,t,r}/k$ for $t = 0$.}
    \label{fig:asymptotic-plots:a}
  \begin{tikzpicture}[scale = 0.55]
  \begin{axis}[
    xlabel = $r$,
    xtick distance = {20},
    ytick = {0, 0.5, 1, 1.5, 2, 2.5, 3},
    y tick label style = {/pgf/number format/.cd, fixed, fixed zerofill, precision=2, /tikz/.cd},
    x = 0.1cm,
    legend pos = south east,
    tick label style={/pgf/number format/1000 sep=}]
    \addplot [blue, mark = *] coordinates {(0, 0.000000000000000)
(1, 0.352608887565825) (2, 0.490625703413219) (3, 0.569215270679994) (4,
0.621840323395299) (5, 0.660435990656537) (6, 0.690444403297976) (7,
0.714745318225499) (8, 0.735023974506128) (9, 0.752341149448238) (10,
0.767402539956564) (11, 0.780698490762173) (12, 0.792581890204947) (13,
0.803314169016425) (14, 0.813093773217781) (15, 0.822074494623052) (16,
0.830377671106902) (17, 0.838100542100905) (18, 0.845322114677882) (19,
0.852107372459068) (20, 0.858510354131074) (21, 0.864576444068426) (22,
0.870344103099269) (23, 0.875846194509207) (24, 0.881111012806996) (25,
0.886163091098621) (26, 0.891023841421072) (27, 0.895712067548633) (28,
0.900244379378995) (29, 0.904635530603888) (30, 0.908898696032138) (31,
0.913045701038251) (32, 0.917087212734667) (33, 0.921032900321260) (34,
0.924891570450184) (35, 0.928671282216118) (36, 0.932379445440525) (37,
0.936022905191124) (38, 0.939608014911667) (39, 0.943140700093576) (40,
0.946626514071444) (41, 0.950070687247414) (42, 0.953478170829038) (43,
0.956853675989122) (44, 0.960201709215164) (45, 0.963526604503089) (46,
0.966832552959754) (47, 0.970123630306873) (48, 0.973403822722482) (49,
0.976677051412329) (50, 0.979947196270788) (51, 0.983218118967820) (52,
0.986493685784034) (53, 0.989777790509655) (54, 0.993074377724771) (55,
0.996387466787812) (56, 0.999721176877057) (57, 1.00307975345689) (58,
1.00646759657757) (59, 1.00988929146596) (60, 1.01334964192708) (61,
1.01685370715539) (62, 1.02040684265323) (63, 1.02401474607796) (64,
1.02768350899320) (65, 1.03141967569271) (66, 1.03523031050730) (67,
1.03912307530957) (68, 1.04310631931633) (69, 1.04718918377830) (70,
1.05138172477352) (71, 1.05569505812860) (72, 1.06014153154083) (73,
1.06473493034687) (74, 1.06949072519677) (75, 1.07442637230786) (76,
1.07956168022650) (77, 1.08491926145349) (78, 1.09052509338688) (79,
1.09640922154126) (80, 1.10260665002957) (81, 1.10915848155663) (82,
1.11611339436027) (83, 1.12352958094657) (84, 1.13147733013741) (85,
1.14004252168918) (86, 1.14933144191667) (87, 1.15947755559050) (88,
1.17065125056991) (89, 1.18307423490849) (90, 1.19704146729291) (91,
1.21295577998395) (92, 1.23138491712498) (93, 1.25316046844430) (94,
1.27956076467280) (95, 1.31267755172345) (96, 1.35623422906971) (97,
1.41770567741071) (98, 1.51520379502768) (99, 1.71089994857077) (100, 2.71089994857077)};
  \end{axis}
\end{tikzpicture}
\end{subfigure}\hfill
\begin{subfigure}[t]{0.49\linewidth}
  \subcaption{$X_{n,t,r}/k$ for $t = 10$.}
  \label{fig:asymptotic-plots:b}
\begin{tikzpicture}[scale = 0.55]
  \begin{axis}[
    xlabel = $r$,
    xtick distance = {20},
    ytick = {0, 0.5, 1, 1.5, 2, 2.5, 3},
    y tick label style = {/pgf/number format/.cd, fixed, fixed zerofill, precision=2, /tikz/.cd},
    x = 0.1cm,
    legend pos = south east,
    tick label style={/pgf/number format/1000 sep=}]
    \addplot [red, mark = *] coordinates{(0, 0.352587828469249)
(1, 0.490575584497216) (2, 0.569136511387592) (3, 0.621734122873477) (4,
0.660303479285874) (5, 0.690286525782723) (6, 0.714562839857198) (7,
0.734817507716745) (8, 0.752111179685700) (9, 0.767149447011463) (10,
0.780422565573454) (11, 0.792283347952118) (12, 0.802993159318038) (13,
0.812750388104768) (14, 0.821708774798188) (15, 0.829989610873302) (16,
0.837690093245122) (17, 0.844889189521181) (18, 0.851651846237456) (19,
0.858032066825635) (20, 0.864075201778442) (21, 0.869819679028828) (22,
0.875298329621457) (23, 0.880539416187502) (24, 0.885567440059335) (25,
0.890403781368638) (26, 0.895067211634390) (27, 0.899574307942912) (28,
0.903939790420298) (29, 0.908176799361450) (30, 0.912297124485533) (31,
0.916311395913039) (32, 0.920229244315281) (33, 0.924059436071754) (34,
0.927809988042818) (35, 0.931488265623644) (36, 0.935101067017884) (37,
0.938654696103226) (38, 0.942155025817247) (39, 0.945607553642131) (40,
0.949017450489449) (41, 0.952389604065357) (42, 0.955728657619921) (43,
0.959039044842749) (44, 0.962325021553433) (45, 0.965590694744170) (46,
0.968840049459056) (47, 0.972076973936693) (48, 0.975305283397565) (49,
0.978528742822992) (50, 0.981751089047282) (51, 0.984976052467637) (52,
0.988207378667036) (53, 0.991448850243173) (54, 0.994704309141581) (55,
0.997977679803502) (56, 1.00127299345929) (57, 1.00459441392710) (58,
1.00794626531526) (59, 1.01133306207697) (60, 1.01475954192937) (61,
1.01823070222910) (62, 1.02175184049628) (63, 1.02532859990272) (64,
1.02896702069571) (65, 1.03267359872158) (66, 1.03645535245584) (67,
1.04031990025142) (68, 1.04427554990173) (69, 1.04833140310540) (70,
1.05249747804632) (71, 1.05678485411045) (72, 1.06120584380993) (73,
1.06577419835743) (74, 1.07050535514647) (75, 1.07541673780951) (76,
1.08052812277804) (77, 1.08586209069657) (78, 1.09144458713953) (79,
1.09730562558457) (80, 1.10348017762111) (81, 1.11000931263470) (82,
1.11694167439169) (83, 1.12433541935631) (84, 1.13226079823901) (85,
1.14080365000959) (86, 1.15007021677467) (87, 1.16019391474088) (88,
1.17134507766099) (89, 1.18374535241078) (90, 1.19768962743233) (91,
1.21358065304289) (92, 1.23198607625361) (93, 1.25373736988171) (94,
1.28011272218538) (95, 1.31320370487219) (96, 1.35673350980530) (97,
1.41817680354296) (98, 1.51564545058592) (99, 1.71131232242875) (100, 1.71131232242875)};
  \end{axis}
\end{tikzpicture}
\end{subfigure}

\begin{subfigure}[t]{0.49\linewidth}
  \subcaption{$X_{n, t, r}/k$ for $0 \leq t \leq 10$ and $0 \leq r \leq 5$.}
  \label{fig:asymptotic-plots:c}
\begin{tikzpicture}[scale = 0.55]
  \begin{axis}[
    xlabel = $r$,
    xtick distance = {1},
    ytick = {0, 0.25, 0.5, 0.75, 1},
    y tick label style = {/pgf/number format/.cd, fixed, fixed zerofill, precision=2, /tikz/.cd},
    x = 2cm,
    y = 10cm,
    legend pos = south east,
    tick label style={/pgf/number format/1000 sep=}]
    \addplot [blue, mark = *, line width=2pt] coordinates{(0, 0.000000000000000) (1, 0.352608887565825) (2, 0.490625703413219) (3,
0.569215270679994) (4, 0.621840323395299) (5, 0.660435990656537)};
    \addplot [black, mark = *] coordinates{(0, 0.0455040871934605) (1, 0.366406998493149) (2, 0.497722973145079)
(3, 0.573716086724906) (4, 0.625030122167428) (5, 0.662857273731785)};
    \addplot [black, mark = *] coordinates{(0, 0.0882883239292443) (1, 0.380362389295476) (2, 0.505077852951329)
(3, 0.578435886615187) (4, 0.628398469809528) (5, 0.665425762931568)};
    \addplot [black, mark = *] coordinates{(0, 0.128541348974362) (1, 0.394400244375718) (2, 0.512651911713355) (3,
0.583353548125986) (4, 0.631932514081944) (5, 0.668132999846066)};
    \addplot [black, mark = *] coordinates{(0, 0.166437721535676) (1, 0.408456009468977) (2, 0.520409490176674) (3,
0.588448836708860) (4, 0.635619716695081) (5, 0.670970635723138)};
    \addplot [black, mark = *] coordinates{(0, 0.202139025539133) (1, 0.422474227042210) (2, 0.528317611761405) (3,
0.593702431752164) (4, 0.639447884399042) (5, 0.673930455060540)};
    \addplot [black, mark = *] coordinates{(0, 0.235794884049011) (1, 0.436407495182463) (2, 0.536345880941704) (3,
0.599095942900970) (4, 0.643405194964574) (5, 0.677004396576576)};
    \addplot [black, mark = *] coordinates{(0, 0.267543891356122) (1, 0.450215537266258) (2, 0.544466372616436) (3,
0.604611917669391) (4, 0.647480218446374) (5, 0.680184571683512)};
    \addplot [black, mark = *] coordinates{(0, 0.297514469617591) (1, 0.463864370992435) (2, 0.552653515336076) (3,
0.610233841479178) (4, 0.651661934110997) (5, 0.683463280593056)};
    \addplot [black, mark = *] coordinates{(0, 0.325825656341200) (1, 0.477325566520273) (2, 0.560883970770330) (3,
0.615946131162358) (4, 0.655939743400073) (5, 0.686833026186372)};
    \addplot [red, mark = *, line width=2pt] coordinates{(0, 0.352587828469249) (1, 0.490575584497216) (2, 0.569136511387592) (3,
      0.621734122873477) (4, 0.660303479285874) (5, 0.690286525782723)};
  \end{axis}
\end{tikzpicture}
\end{subfigure}\hfill
\begin{subfigure}[t]{0.49\linewidth}
  \subcaption{$X_{n, t, r}/k$ for $0 \leq t \leq 10$ and $95 \leq r \leq 100$.}
  \label{fig:asymptotic-plots:d}
\begin{tikzpicture}[scale = 0.55]
  \begin{axis}[
    xlabel = $r$,
    xtick distance = {1},
    ytick = {1.25, 1.5, 1.75, 2, 2.25, 2.5, 2.75},
    y tick label style = {/pgf/number format/.cd, fixed, fixed zerofill, precision=2, /tikz/.cd},
    x = 2cm,
    y = 5cm,
    legend pos = south east,
    tick label style={/pgf/number format/1000 sep=}]
    \addplot [black, mark = *] coordinates{(95, 1.31271618114590)
      (96, 1.35627088455379) (97, 1.41774026502893) (98, 1.51523621838972)
      (99, 1.71093022180368) (100, 2.61093022180368)};
    \addplot [black, mark = *] coordinates{(95, 1.31275795022374)
      (96, 1.35631051945420) (97, 1.41777766414964) (98, 1.51527127746211)
      (99, 1.71096295605235) (100, 2.51096295605235)};
    \addplot [black, mark = *] coordinates{(95, 1.31280284698484)
      (96, 1.35635312244851) (97, 1.41781786412133) (98, 1.51530896228555)
      (99, 1.71099814203551) (100, 2.41099814203551)};
    \addplot [black, mark = *] coordinates{(95, 1.31285085951021)
      (96, 1.35639868226631) (97, 1.41786085434291) (98, 1.51534926294914)
      (99, 1.71103577051782) (100, 2.31103577051782)};
    \addplot [black, mark = *] coordinates{(95, 1.31290197593369)
      (96, 1.35644718768893) (97, 1.41790662426347) (98, 1.51539216959005)
      (99, 1.71107583230954) (100, 2.21107583230954)};
    \addplot [black, mark = *] coordinates{(95, 1.31295618444172)
      (96, 1.35649862754912) (97, 1.41795516338194) (98, 1.51543767239326)
      (99, 1.71111831826631) (100, 2.11111831826631)};
    \addplot [black, mark = *] coordinates{(95, 1.31301347327300)
      (96, 1.35655299073079) (97, 1.41800646124685) (98, 1.51548576159127)
      (99, 1.71116321928885) (100, 2.01116321928885)};
    \addplot [black, mark = *] coordinates{(95, 1.31307383071827)
      (96, 1.35661026616874) (97, 1.41806050745600) (98, 1.51553642746382)
      (99, 1.71121052632269) (100, 1.91121052632269)};
    \addplot [red, line width=2pt]
    coordinates{(95, 1.31320370487219)
      (96, 1.35673350980530) (97, 1.41817680354296) (98, 1.51564545058592)
      (99, 1.71131232242875)};
    \begin{scope}
      \clip (95, 1.31320370487219) -- (96, 1.35673350980530) -- (97,
      1.41817680354296) -- (98, 1.51564545058592) -- (99, 1.71131232242875)
      -- (95, 1.71131232242875) -- cycle;
      \addplot [blue, line width=2pt, postaction={clip, postaction={draw, blue}}]
    coordinates{(95, 1.31320370487219)
      (96, 1.35673350980530) (97, 1.41817680354296) (98, 1.51564545058592)
      (99, 1.71131232242875)};
    \end{scope}
    \addplot [red, line width=1pt] coordinates{(99, 1.71131232242875) (100, 1.71131232242875)};
    \addplot [blue, line width=1pt] coordinates{(99, 1.71089994857077) (100, 2.71089994857077)};
    \addplot [black, mark = *] coordinates{(95, 1.31313724512002)
      (96, 1.35667044284835) (97, 1.41811729165619) (98, 1.51558966033758)
      (99, 1.71126023035788) (100, 1.81126023035788)};
    \draw[draw=red, fill=red] (100, 1.71131232242875) circle [radius = 2pt];
    \draw[draw=blue, fill=blue] (100, 2.71089994857077) circle [radius = 2pt];
    \end{axis}
\end{tikzpicture}
\end{subfigure}
\caption{Distribution of $X_{n, t, r}/k$ for $n = 100$, $k = 10$, and varying values of $t$ and $r$.}
\label{fig:asymptotic-plots}
\end{figure}

\subsection{Variance}

Fueled by the generating function approach presented in Section~\ref{sec:generating-functions} we
are able to obtain asymptotic expansions for the variance $\V X_{n,t,r}$ as well. To do so, we use
the known asymptotic behaviour for the expected value $\E X_{n,t,r}$ as discussed by
Propositions~\ref{prop:asy1} and~\ref{prop:asy2} together with the explicit formulas for the second
factorial moment $\E (X_{n,t,r} (X_{n,t,r} - 1))$ that are obtained by normalizing the coefficients
extracted from the second partial derivatives of the corresponding bivariate generating functions
(Propositions~\ref{prop:r=n-2d}, \ref{prop:r=0-2d}, and~\ref{prop:r=1-2d}).
Combining these quantities by means of the well-known formula
\begin{equation*}
  \V X_{n,t,r} = \E (X_{n,t,r} (X_{n,t,r} - 1)) + \E X_{n,t,r} - (\E X_{n,t,r})^{2}
\end{equation*}
yields the variance of the number of down-steps between the $r$-th and $(r+1)$-th down-steps in
$k_{t}$-Dyck paths of length $(k+1)n$.

While the calculations are not complicated per se, the involved asymptotic expansions are rather
large which makes carrying out these calculations by hand a tedious task. Thus, we used
SageMath~\cite{SageMath:2020:9.0} and included our calculations in one of our associated
worksheets\footnote{Our computations are publicly available at
  \url{https://gitlab.aau.at/behackl/kt-dyck-downstep-code}.}. The following theorem summarizes our
findings.
\begin{theorem}\label{thm:downstep-variance}
  Let $k$ and $t$ be fixed integers with $k\geq 1$ and $0\leq t\leq k$, and consider
  $n\to\infty$. Then the variance of the random variables $X_{n,t,0}$, $X_{n,t,1}$, and $X_{n,t,n}$
  modeling the number of down-steps before the first up-step, between the first and second up-steps,
  and after the last up-step respectively, admit the asymptotic expansions
  \begin{equation}\label{eq:downstep-variance:r=0}
    \mathbb{V} X_{n,t,0} = \frac{-\alpha_{t,k}^{2}k^{4} + 2(t+2)\alpha_{t,k} k^{3} +
    ((2t+3)(t+1)\alpha_{t,k} + 3t^{2} + 4t) k^{2} + t(t+1)k}{(t+1)^{2}} + O\bigg(\frac{1}{n}\bigg),
  \end{equation}
  where $\alpha_{t,k} = \big(\frac{k}{k+1}\big)^{t} - 1$,
  \begin{align}\label{eq:downstep-variance:r=1}
    \begin{split}
      \V X_{n,t,1} &= \frac{(6k+t+3)k^{k+t+2}}{(k+1)^{k+t+1}} + \frac{(4k+1)k^{t+2}}{(k+1)^{t}(t+1)} - \frac{(4k^{2} - 2kt + 3k
        -t)k}{(t+1)}\\
      & \quad - \bigg(\frac{(t+1)k^{k+t+2} + k^{t+2}(k+1)^{k+1} -
        k(k-t)(k+1)^{k+t+1}}{(k+1)^{k+t+1}(t+1)}\bigg)^{2} + O\bigg(\frac{1}{n}\bigg)
    \end{split}
  \end{align}
  and finally,
  \begin{equation}\label{eq:downstep-variance:r=n}
    \V X_{n, t, n} = \frac{(k + 1)^{2k+2} - (2k+1)(k + 1)^{k+1} k^{k}}{k^{2k}} + O\bigg(\frac{1}{n}\bigg).
  \end{equation}
\end{theorem}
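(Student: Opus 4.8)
The plan is to use the standard variance decomposition
\[
\V X_{n,t,r} = \E\bigl(X_{n,t,r}(X_{n,t,r}-1)\bigr) + \E X_{n,t,r} - (\E X_{n,t,r})^{2},
\]
feeding in the expected-value asymptotics already recorded in Propositions~\ref{prop:asy1} and~\ref{prop:asy2}, and supplying the only missing ingredient — the asymptotics of the second factorial moment — separately for each of the three values $r \in \{0,1,n\}$. Once the constant term of the second factorial moment is known for each case, the three stated expansions \eqref{eq:downstep-variance:r=0}, \eqref{eq:downstep-variance:r=1}, and \eqref{eq:downstep-variance:r=n} follow by substitution, with the $O(1/n)$ remainder propagating cleanly since every ingredient is itself known up to order $O(1/n)$.

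For the second factorial moment I would start from the identity
\[
\E\bigl(X_{n,t,r}(X_{n,t,r}-1)\bigr) = \frac{[x^{n}]\,\partial_u^2 S_{t,r}(x,u)\big|_{u=1}}{C_{n,t}},
\]
which holds because $[x^{n}]\partial_u^2 S_{t,r}(x,u)|_{u=1}$ counts the ordered pairs of distinct marked down-steps (in the relevant gap) over all $k_t$-Dyck paths of length $(k+1)n$, and dividing by the total number of such paths $C_{n,t}$ turns this count into the second factorial moment. The explicit coefficient formulas are already available: Proposition~\ref{prop:r=0-2d} for $r=0$, Proposition~\ref{prop:r=1-2d} for $r=1$, and Proposition~\ref{prop:r=n-2d} for $r=n$. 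Each such formula is a finite linear combination of terms of the shape $\frac{c}{n+a}\binom{(k+1)n+b}{n+c'}$, and likewise $C_{n,t} = \frac{t+1}{(k+1)n+t+1}\binom{(k+1)n+t+1}{n}$, so the quotient is a finite sum of ratios of binomial coefficients.

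To extract the asymptotics of these ratios I would rewrite each binomial as a quotient of Gamma functions and apply Stirling's expansion, cf.~\DLMF{5.11}{3}. The common exponential factor $\bigl((k+1)^{k+1}/k^{k}\bigr)^{n}$ cancels between numerator and denominator, so every ratio converges to a finite constant — a ratio of suitable powers of $k$ and $k+1$ — with a correction of order $O(1/n)$; summing the constant terms over the finitely many summands yields the leading behaviour of $\E\bigl(X_{n,t,r}(X_{n,t,r}-1)\bigr)$. Combining this with the known expansion of $\E X_{n,t,r}$ — taken from Proposition~\ref{prop:asy1}~(1) and~(2) for $r=0,1$, and from Proposition~\ref{prop:asy2}~(1) for $r=n$ — via the variance formula produces the three claimed expressions, with the quadratic term $(\E X_{n,t,r})^{2}$ supplying the squared constants $\alpha_{t,k}^{2}$ and the square of the $k^{\,k+t+\cdots}/(k+1)^{\,k+t+\cdots}$ factors.

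The main obstacle is purely computational: the intermediate expansions, particularly for $r=1$, are long rational expressions in $k$, $t$, and $n$, and checking that the $\sqrt{n}$- and $n$-dependent pieces cancel to leave an $O(1/n)$ remainder — while correctly assembling the $k^{\,k+t+\cdots}/(k+1)^{\,k+t+\cdots}$ factors and subtracting $(\E X_{n,t,r})^{2}$ — is error-prone by hand. For this reason I would carry out the bookkeeping with a computer algebra system, as the authors do, using SageMath's asymptotic-expansions module to perform the Stirling expansions, cancel the leading orders, and simplify the resulting constant. The role of the hand argument is then confined to justifying the reduction to ratios of binomials and the cancellation of the exponential growth, after which the stated closed forms emerge from routine (if lengthy) simplification.
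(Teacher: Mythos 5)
Your proposal follows essentially the same route as the paper: the authors likewise combine the variance formula $\V X_{n,t,r} = \E(X_{n,t,r}(X_{n,t,r}-1)) + \E X_{n,t,r} - (\E X_{n,t,r})^{2}$ with the second factorial moments obtained by normalizing the coefficients from Propositions~\ref{prop:r=0-2d}, \ref{prop:r=1-2d}, and~\ref{prop:r=n-2d}, the expected values from Propositions~\ref{prop:asy1} and~\ref{prop:asy2}, and Stirling-type expansions carried out in SageMath's asymptotic-expansions module. Your added justification that the second $u$-derivative counts ordered pairs of distinct down-steps, and that the exponential and $\sqrt{n}$ factors cancel in the binomial ratios, is correct and fills in details the paper leaves implicit.
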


\begin{remark}
  Observe that the main contribution in the asymptotic expansions of both the expected value and the
  variance is of constant order and independent of $n$. This hints towards the parameter admitting
  a discrete limiting distribution; details are subject to further investigations.
\end{remark}

\section*{Conclusion}
In this article we began investigations into statistics concerning
down-steps in $k_t$-Dyck paths under the assumption $0\leq t \leq k$.
This led to new combinatorial identities, as well as new bijective
proofs for known combinatorial identities.  Furthermore, within the
study of applications of this statistic in coding theory, we found a
new \emph{Cycle Lemma}-type result which also led us to a novel
interpretation of Catalan numbers.

There are several open questions related to the down-step statistic
that we intend to address in further work. This includes
the more involved case of $t > k$, a closer investigation to extended
Kreweras walks as mentioned in the introduction, as well as further
study of the asymptotic behaviour.

\acknowledgements

The authors would like to thank Clemens Heuberger for comments on the full version of this paper,
as well as anonymous referees for their thorough and valuable reviews.

\bibliographystyle{alpha}
\bibliography{ktdyck}

\appendix

\section{Proof of Proposition~\ref{prop:r=n-2d}}\label{ap:App-A}

\begin{proof}
  The second derivative of the functional equation given in \eqref{eq:en-down-gf} is
  \begin{align*}
  \partial_u^2S_t(x, u) & = x\sum_{i=0}^t\partial_u^2S_{k-i-1}(x, u)u^{k-i} + 2x\sum_{i=0}^t\partial_uS_{k-i-1}(x, u)(k-i)u^{k-i-1}\\
                        & \quad + x\sum_{i=0}^tS_{k-i-1}(x, 1)\partial_u^2S_t(x, u)
                        + x\sum_{i=0}^tS_{k-i-1}(x, u)(k-i)(k-i-1)u^{k-i-2}.
  \end{align*}
  Simplifying this expression, setting $u = 1$, and applying relevant substitutions yields
  \begin{align}\label{eq:substitution-S2t}
    \partial_u^2S_t(x, u)\big|_{u=1}(1-z)^{t+1} & = z(1-z)^k\sum_{i=0}^t\partial_u^2S_{k-i-1}(x, u)\big|_{u=1}- z(1-z)^k\sum_{i=0}^t\frac{(k-i)^2}{(1-z)^{k-i}}\nonumber\\
    & \quad   + (2 - z(1-z)^k)\sum_{i=0}^t\frac{k-i}{(1-z)^{k-i}} - 2\sum_{i=0}^t(k-i).
  \end{align}
  We show that the second derivative at $u = 1$ is equal to
  \begin{equation}\label{eq:r=n-second-derivative}
    \partial_u^2S_t(x, u) \big|_{u=1} = \frac{2(1 - (1-z)^{t+1})}{z^2(1-z)^{2k+t+1}} - \frac{2(t+k+2)}{z(1-z)^{k+t+1}} + \frac{2(k+1)}{z(1-z)^{k}} + \frac{(t+1)(t+2)}{(1-z)^{t+1}}
  \end{equation}
  by substituting it into the occurrences of $\partial_u^2S_{k-i-1}(x, u)\big|_{u=1}$ on the right-hand side of \eqref{eq:substitution-S2t}, and simplifying this to obtain
  \begin{align*}
    \partial_u^2S_t(x, u)\big|_{u=1}(1-z)^{t+1} & = z(1-z)^k\sum_{i=0}^t\frac{2(1 - (1-z)^{k-i})}{z^2(1-z)^{3k-i}} - z(1-z)^k\sum_{i=0}^t\frac{2(2k-i+1)}{z(1-z)^{2k-i}}\\
    & \quad + z(1-z)^k\sum_{i=0}^t\frac{2(k+1)}{z(1-z)^{k}} + z(1-z)^k\sum_{i=0}^t\frac{(k-i)(k-i+1)}{(1-z)^{k-i}}\\
  & \quad  - z(1-z)^k\sum_{i=0}^t\frac{(k-i)^2}{(1-z)^{k-i}} + (2 - z(1-z)^k)\sum_{i=0}^t\frac{(k-i)}{(1-z)^{k-i}}\\
  &\quad - 2\sum_{i=0}^t(k-i)\\
     & = \frac{2(1-(1-z)^{t+1})}{z^2(1-z)^{2k}} - \frac{2(t+k+2)}{z(1-z)^{k}} + 2(k+1)\frac{(1-z)^{t+1}}{z(1-z)^k}\\
    & \quad + (t+2)(t+1),
  \end{align*}
  thus proving \eqref{eq:r=n-second-derivative}. We then extract coefficients from \eqref{eq:r=n-second-derivative} using
  Lemma~\ref{lem:cauchy} to obtain the result, where
  \begin{equation*}
    [x^{n}]\frac{1}{z^2(1-z)^{2k+t+1}} = \frac{t+1}{n+2}\binom{(k+1)(n+2)+t}{n+1}, \quad [x^{n}]\frac{1}{(1-z)^{t+1}} = \frac{t+1}{n}\binom{(k+1)n+t}{n-1},
  \end{equation*}
  and
  \begin{equation*}
    [x^{n}]\frac{1}{z(1-z)^{k+t+1}} = \frac{t+1}{n+1}\binom{(k+1)(n+1)+t}{n}.\qedhere
  \end{equation*}
\end{proof}

\section{Proof of Theorem~\ref{thm:gen-func-down-steps}}\label{ap:proof}
\fontsize{10}{10}\selectfont
\begin{proof}
To make the proof simpler, we compute each term of \eqref{eq:base-for-app} individually, starting with the first.
\begin{flalign*}
  xS_t&(x, 1)\sum_{i=0}^t\partial_u S_{k-i-1,r-1}(x, u)\big|_{u=1} = z(1-z)^k S_t(x, 1)\sum_{i=0}^t\frac{1-(1-z)^{k-i}}{z(1-z)^{k-i-1}}&\\
  & \;- z(1-z)^k S_t(x, 1)\sum_{i=0}^t\sum_{j=1}^{r-1}z^{j-1}(1-z)^{jk}\frac{k-i}{(k+1)j+k-i}\binom{(k+1)j+k-i}{j}\\
  & \; + z(1-z)^k S_t(x, 1)\sum_{i=0}^t\sum_{j = 1}^{r-2}z^j(1-z)^{jk}\frac{k-i}{(k+1)j+k-i}\binom{(k+1)j+k-i}{j}\\
  & \; - z(1-z)^k S_t(x, 1)\sum_{i=0}^tz^{r-1}(1-z)^{(r-1)k}\frac{(k-i-1)(k-i)}{(k+1)(r-1)+k-i}\binom{(k+1)(r-1)+k-i}{r-1}.
\end{flalign*}
Simplifying the sum that is a geometric series (and including its excess terms into the $j=0$ cases of the second
and third terms) as well as evaluating sums in the second and third sums
by using \eqref{eq:FC-summation}, we obtain
\begin{flalign*}
  xS_t&(x, 1)\sum_{i=0}^t\partial_u S_{k-i-1,r-1}(x, u)\big|_{u=1}  = \frac{1-(1-z)^{t+1}}{z(1-z)^{t}}\\
  & \quad - \Big(\frac{1}{z}S_t(x, 1) - S_t(x, 1)\Big)\sum_{j = 0}^{r-1}z^{j+1}(1-z)^{(j+1)k}\frac{t+1}{j+1}\binom{(k+1)j+k-t-1}{j}\\
  & \quad - z^{r}(1-z)^{rk}S_t(x, 1)\sum_{i=0}^t\frac{(k-i)^2}{(k+1)(r-1)+k-i}\binom{(k+1)(r-1)+k-i}{r-1}.
\end{flalign*}
Set $B_{j, k ,t} := \frac{t+1}{j+1}\binom{(k+1)j+k-t-1}{j}$, the second term from \eqref{eq:base-for-app} simplifies to
\begin{flalign*}
  & x\sum_{j=0}^{r-1}x^{j}\partial_u S_{t,r-1-j}(x, u)\big|_{u=1}\frac{t+1}{j+1}\binom{(k+1)j+k-t-1}{j}&\\
  & = \sum_{j=0}^{r-1}z^{j+1}(1-z)^{k(j+1)}\frac{1-(1-z)^{t+1}}{z(1-z)^t}\frac{t+1}{j+1}\binom{(k+1)j+k-t-1}{j}\\
  & \quad - \sum_{j=0}^{r-1}\sum_{\ell=1}^{r-1-j}z^{\ell+j}(1-z)^{k(\ell+j+1)}\frac{t+1}{(k+1)\ell+t+1}\binom{(k+1)\ell+t+1}{\ell}B_{j, k ,t}\\
  & \quad + \sum_{j=0}^{r-1}\sum_{\ell=1}^{r-2-j}z^{\ell+j+1}(1-z)^{k(\ell+j+1)}\frac{t+1}{(k+1)\ell+t+1}\binom{(k+1)\ell+t+1}{\ell}B_{j, k ,t}\\
  & \quad - \sum_{j=0}^{r-1}z^{r}(1-z)^{rk}\frac{t(t+1)}{(k+1)(r-1-j)+t+1}\binom{(k+1)(r-1-j)+t+1}{r-1-j}B_{j, k ,t}.
\end{flalign*}
This is further simplified by applying Proposition~\ref{prop:gen-func-simp} and rewriting $(1-(1-z)^t)/(z(1-z)^t)$
in terms of $S_t(x, 1)$,
\begin{flalign*}
  x\sum_{j=0}^{r-1}&x^{j}\partial_u S_{t,r-1-j}(x, u)\big|_{u=1}\frac{t+1}{j+1}\binom{(k+1)j+k-t-1}{j}&\\
 & = \Big(\frac{1}{z}S_t(x, 1) - S_t(x, 1) - \frac{1}{z} + 1\Big)\sum_{j=0}^{r-1}z^{j+1}(1-z)^{k(j+1)}B_{j, k ,t}\\
 & \quad - \frac{1}{z}\sum_{j=0}^{r-1}\sum_{\ell=1}^{r-1-j}z^{\ell+j+1}(1-z)^{k(\ell+j+1)}\frac{t+1}{(k+1)\ell+t+1}\binom{(k+1)\ell+t+1}{\ell}B_{j, k ,t}\\
 & \quad + \sum_{j=0}^{r-1}\sum_{\ell=1}^{r-2-j}z^{\ell+j+1}(1-z)^{k(\ell+j+1)}\frac{t+1}{(k+1)\ell+t+1}\binom{(k+1)\ell+t+1}{\ell}B_{j, k ,t}\\
 & \quad - z^{r}(1-z)^{rk}\frac{t(t+1)}{(k+1)r+t+1}\binom{(k+1)r+t+1}{r},
\end{flalign*}
and finally, including the terms for $\ell = 0$ into each of the sums,
\begin{flalign*}
  x\sum_{j=0}^{r-1}&x^{j}\partial_u S_{t,r-1-j}(x, u)\big|_{u=1}\frac{t+1}{j+1}\binom{(k+1)j+k-t-1}{j}&\\
 & = \Big(\frac{1}{z}S_t(x, 1) - S_t(x, 1)\Big)\sum_{j=0}^{r-1}z^{j+1}(1-z)^{k(j+1)}B_{j, k ,t}\\
 & \quad - \frac{1}{z}\sum_{j=0}^{r-1}\sum_{\ell=0}^{r-1-j}z^{\ell+j+1}(1-z)^{k(\ell+j+1)}\frac{t+1}{(k+1)\ell+t+1}\binom{(k+1)\ell+t+1}{\ell}B_{j, k ,t}\\
 & \quad + \sum_{j=0}^{r-1}\sum_{\ell=0}^{r-2-j}z^{\ell+j+1}(1-z)^{k(\ell+j+1)}\frac{t+1}{(k+1)\ell+t+1}\binom{(k+1)\ell+t+1}{\ell}B_{j, k ,t}\\
 & \quad - z^{r}(1-z)^{rk}\frac{t(t+1)}{(k+1)r+t+1}\binom{(k+1)r+t+1}{r}.
\end{flalign*}
Combining these terms and the final term of \eqref{eq:base-for-app} which has not been simplified gives us
\begin{flalign*}
  \partial_u &S_{t,r}(x, u)\big|_{u=1} = \frac{1-(1-z)^{t+1}}{z(1-z)^{t}}\\
  & \quad - \Big(\frac{1}{z}S_t(x, 1) - S_t(x, 1)\Big)\sum_{j = 0}^{r-1}z^{j+1}(1-z)^{(j+1)k}B_{j, k ,t}\\
  & \quad - z^{r}(1-z)^{rk}S_t(x, 1)\sum_{i=0}^t\frac{(k-i)^2}{(k+1)(r-1)+k-i}\binom{(k+1)(r-1)+k-i}{r-1}\\
  & \quad + \Big(\frac{1}{z}S_t(x, 1) - S_t(x, 1)\Big)\sum_{j=0}^{r-1}z^{j+1}(1-z)^{k(j+1)}B_{j, k ,t}\\
  & \quad - \frac{1}{z}\sum_{j=0}^{r-1}\sum_{\ell=0}^{r-1-j}z^{\ell+j+1}(1-z)^{k(\ell+j+1)}\frac{t+1}{(k+1)\ell+t+1}\binom{(k+1)\ell+t+1}{\ell}B_{j, k ,t}\\
  & \quad + \sum_{j=0}^{r-1}\sum_{\ell=0}^{r-2-j}z^{\ell+j+1}(1-z)^{k(\ell+j+1)}\frac{t+1}{(k+1)\ell+t+1}\binom{(k+1)\ell+t+1}{\ell}B_{j, k ,t}\\
  & \quad - z^{r}(1-z)^{rk}\frac{t(t+1)}{(k+1)r+t+1}\binom{(k+1)r+t+1}{r}\\
  & \quad + z^{r}(1-z)^{rk}S_t(x, 1)\sum_{i=0}^t\frac{(k-i)^2}{(k+1)(r-1)+k-i}\binom{(k+1)(r-1)+k-i}{r-1}.
\end{flalign*}
From this, there are some clear cancellations of factors and we re-index the double sums, which leads to
\begin{flalign*}
  \partial_u &S_{t,r}(x, u)\big|_{u=1} = \frac{1-(1-z)^{t+1}}{z(1-z)^{t}}\\
  & \quad - \frac{1}{z}\sum_{m=1}^{r}z^{m}(1-z)^{km}\sum_{j=0}^{m-1}\frac{t+1}{(k+1)(m-1-j)+t+1}\binom{(k+1)(m-1-j)+t+1}{m-1-j}B_{j, k ,t}\\
  & \quad + \sum_{m=1}^{r-1}z^{m}(1-z)^{km}\sum_{j=0}^{m-1}\frac{t+1}{(k+1)(m-1-j)+t+1}\binom{(k+1)(m-1-j)+t+1}{m-1-j}B_{j, k ,t}\\
  & \quad - z^{r}(1-z)^{rk}\frac{t(t+1)}{(k+1)r+t+1}\binom{(k+1)r+t+1}{r},
\end{flalign*}
and finally, once again using Proposition~\ref{prop:gen-func-simp} we have obtained an expression which is in the form
of Theorem~\ref{thm:gen-func-down-steps}, concluding the proof
\begin{flalign*}
  \partial_u S_{t,r}(x, u)\big|_{u=1} &= \frac{1-(1-z)^{t+1}}{z(1-z)^{t}} - z^{r}(1-z)^{kr}\frac{t(t+1)}{(k+1)r+t+1}\binom{(k+1)r+t+1}{r}\\
  & \quad - \sum_{m=1}^{r}z^{m-1}(1-z)^{km}\frac{t+1}{(k+1)m+t+1}\binom{(k+1)m+t+1}{m}\\ 
  & \quad + \sum_{m=1}^{r-1}z^{m}(1-z)^{km}\frac{t+1}{(k+1)m+t+1}\binom{(k+1)m+t+1}{m}.
\end{flalign*}
\end{proof}

\end{document}